\newtheorem{theorem}{Theorem}[section]
\newtheorem{lemma}[theorem]{Lemma}
\newtheorem{proposition}[theorem]{Proposition}
\theoremstyle{definition}
\newtheorem{definition}[theorem]{Definition}
\newtheorem{remark}[theorem]{Remark}
\newcommand{\ep}{\varepsilon}
\newcommand{\eps}{\varepsilon}
\newcommand{\Rb}{\mathbb{R}}
\newcommand{\RR}{\mathbb{R}}
\newcommand{\ZZ}{\mathbb{Z}}
\newcommand{\NN}{\mathbb{N}}
\newcommand{\cbf}{\mathbf{c}}
\newcommand{\be}{\begin{equation}}
\newcommand{\ee}{\end{equation}}
\newcommand{\pa}{\partial}
\newcommand{\Ce}{\mathcal C^e}
\newcommand{\sfS}{\mathsf S}
\newcommand{\sfW}{\mathsf W}
\newcommand{\tu}{\tilde{u}}
\newcommand{\tgamma}{\tilde{\gamma}}
\newcommand{\tU}{\widetilde{U}}
\newcommand{\tJ}{\widetilde{J}}
\newcommand{\lambdabf}{\boldsymbol{\lambda}}
\newcommand{\nubf}{\boldsymbol{\nu}}
\newcommand{\alphabf}{\boldsymbol{\alpha}}
\newcommand{\hdot}{\dot{H}^1}
\newcommand{\OOO}{\mathcal{O}}
\newcommand{\HHH}{\mathcal{H}}
\newcommand{\vv}{\vec{v}}
\newcommand{\indic}{1\!\!1}
\DeclareMathOperator{\wlim}{w-lim}
\numberwithin{equation}{section}
\title[Quadratic wave in 6D] 
{Soliton resolution for the radial quadratic wave equation in space dimension 6}
\author[C.~Collot]{Charles Collot$^1$}
\author[T.~Duyckaerts]{Thomas Duyckaerts$^2$}
\author[C.~Kenig]{Carlos Kenig$^3$}
\author[F.~Merle]{Frank Merle$^4$}
\thanks{$^1$AGM (UMR 8088), Universit\'e de Cergy-Pontoise, and Centre National de la Recherche Scientifique}
\thanks{$^2$LAGA (UMR 7539), Universit\'e Sorbonne Paris Nord, and Institut Universitaire de France}
\thanks{$^3$University of Chicago. Partially supported by NSF Grant DMS-1800082}
\thanks{$^4$AGM (UMR 8088), Universit\'e de Cergy-Pontoise, and Institut des Hautes \'Etudes Scientifiques}
\thanks{ 
\today}
\begin{document}

\begin{abstract}We consider the quadratic semilinear wave equation in six dimensions. This energy critical problem admits a ground state solution, which is the unique (up to scaling) positive stationary solution. We prove that any spherically symmetric solution, that remains bounded in the energy norm, evolves asymptotically to a sum of decoupled mo\-du\-lated ground states, plus a radiation term. As a by-product of the approach we prove the non-existence of multisoliton solutions that do not emit any radiation. The proof follows the method initiated for large odd dimensions by the last three authors, reducing the problem to ru\-ling out the existence of such non-radiative multisolitons, by deriving a contradiction from a finite dimensional system of ordinary differential equations governing their modulation parameters. In comparison, the difficulty in six dimensions is the failure of certain channel of energy estimates and the related existence of a linear resonance. We use the obtention of new channel of energy estimates, from our previous article \cite{CoDuKeMe22}, as well as the classification of non-radiative solutions with small energy, from our work \cite{CoDuKeMe22Pb}.
\end{abstract}

\maketitle

\tableofcontents

\section{Introduction}

In this paper we will consider  the wave equation on $\RR^6$, with the energy-critical focusing nonlinearity:

\begin{equation}
 \label{NLWabs}
 \partial_t^2u-\Delta u=|u|u,
\end{equation} 
together with a similar problem
\begin{equation}
 \label{NLW}
 \partial_t^2u-\Delta u=u^2,
\end{equation} 
where $t\in \RR$ and $x\in \RR^6$, 
with initial data
\begin{equation}
 \label{ID}
 \vec{u}_{\restriction t=0}=(u_0,u_1)\in \HHH,
\end{equation} 
where $\vec{u}=(u,\partial_t u)$, and $\HHH=\hdot(\RR^6\times L^2(\RR^6)$ is the energy space. We will only consider radial initial data, i.e. data depending only on $r=|x|=\sqrt{x_1^2+\ldots+x_6^2}$.

We denote by
$$W(x)=\frac{1}{(1+\frac{|x|^2}{24})^2}$$
the ground state of \eqref{NLWabs} and  \eqref{NLW} which solves $-\Delta W=W^2$.

The equation \eqref{NLWabs} is a special case of the energy-critical wave equation 
\begin{equation}
\label{NLWN}
\partial_t^2u-\Delta u=|u|^{\frac{4}{N-2}}u 
\end{equation} 
in general space dimension $N\geq 3$, whose ground state is given by $W(x)=\left(1+\frac{|x|^2}{N(N-2)}\right)^{1-\frac{N}{2}}$.

\subsection{Background on the soliton resolution conjecture}

\emph{The main results of this paper are the proofs of soliton resolution, without size constraints, and for all times,  for radial solutions of \eqref{NLWabs} and \eqref{NLW} that are bounded in $\HHH$.} The same proof applies to the radial energy critical Yang-Mills equations and the Wave-Maps equations in the $2$-equivariant case. The general non radial problem seems out of reach.

We start with a general discussion of the soliton resolution conjecture for nonlinear dispersive equations. This conjecture predicts that any global in time solution of this type of equation evolves asymptotically as a sum of decoupled solitons (traveling wave solutions, which are well-localized and traveling at a fixed speed), a radiative term (typically a solution to a linear equation) and a term going to zero in the energy space. For finite time blow-up solutions, a similar decomposition should hold, depending on the nature of the blow-up. In the present case, where we consider radial solutions of equations \eqref{NLWabs} and \eqref{NLW} whose energy norm stays bounded, the solitons are the stationary states. The conjecture then predicts that such solutions resolve into a sum of stationary states decoupled by scaling plus a radiation.

This conjecture arose in the 1970's from numerical simulations and the theory of integrable equations (see  \cite{DuKeMaMe21P} for a historic perspective). The first theoretical results in the direction of soliton resolution were obtained for the completely integrable KdV, mKdV and $1$-dimensional cubic NLS, using the method of inverse scattering (\cite{Lax68}, \cite{EcSc83},\cite{Eckhaus86},  \cite{Schuur86BO},  \cite{SegurAblowitz76}, \cite{Novoksenov80}, \cite{BoJeMcL18}). 

For 30/40 years, the conjecture was established with constraints on the initial data, close to a soliton, a setting in which the problem is then perturbative. We refer to the introduction of \cite{DuJiKeMe17a} for a more complete discussion and more references on the subject. The conjecture was also studied in the context of parabolic equations. Classification results for solutions "below the ground state'', i.e. with optimal size constraints on the initial data were obtained in \cite{KeMe08}, \cite{DuMe08} in the case of the energy-critical nonlinear wave equation \eqref{NLWN}  (see \cite{DuKeMaMe21P} for more details). 

 As seen in many recent works, \emph{the proof of rigidity (also called Liouville) theorems, classifying 
solutions that are non-dispersive (in a sense specified below) is crucial in the understanding of the asymptotic dynamics of the semilinear dispersive equation \eqref{NLWN}.} A typical statement is that the only non-dispersive solutions are the stationary solutions (or more generally the solitons) of the equation.
 
A first notion of non-dispersive solutions is given by \emph{solutions with the compactness property in time}, that are solutions whose trajectory is precompact up the invariances of the equation. In the radial case, equations \eqref{NLW} and \eqref{ID} are invariant by the scaling transformation
\be \label{id:def:energyscaling}
u_{(\lambda)}(t,x)=\frac{1}{\lambda^2} u\left(\frac{t}{\lambda},\frac{x}{\lambda} \right)
\ee
in the sense that if $u$ is a solution then so is $u_{(\lambda)}$ for any $\lambda>0$. The concept of solution with the compactness property goes back to \cite{MaMe00}, in the context of the KdV equation (see also \cite{KeMe06} and references therein for NLS). For equation \eqref{NLWN}, these solutions were first considered in \cite{KeMe08}, where a rigidity theorem with a size constraint is proved. The  general rigidity theorem, without a size constraint is proved in \cite{DuKeMe16a} (see also \cite{DuKeMe11a} for the radial, $3D$ case).

 \subsection{Background on the case of hyperbolic equations}

In the context of non-integrable dispersive equations, it became clear that the problem in the hyperbolic situation, especially in the context of energy critical nonlinearities (equation \eqref{NLWN}), was the first to be considered using some decoupling related to the finite speed of propagation. First, results for data close to the ground state were obtained (see \cite{DuKeMe11a}, \cite{DuKeMe12}, \cite{KrNaSc13b}, \cite{KrNaSc15}). Then, the soliton resolution for sequences of times in the radial case, for solutions which are bounded in the energy norm, was proved by \cite{DuKeMe12b} in $3$ dimensions, \cite{Rodriguez16} in all other odd dimensions, in \cite{CoKeLaSc18} in $4$ dimensions and in \cite{JiaKenig17} in $6$ dimensions. In \cite{DuJiKeMe17a}, the second, third and fourth authors, with Hao Jia, proved the decomposition for sequences of times, in the nonradial case, for solutions which are bounded in the energy norm, in dimensions $3$, $4$ and $5$.

To consider the full problem (proving the decomposition for all times) one has to understand the collision of solitons and prove that all collisions produce some radiation, which limits their number by energy considerations.  This is the approach introduced by the last three authors in \cite{DuKeMe13} and fully developed by them in \cite{DuKeMe19Pb,DuKeMe21a,DuKeMe20}. More precisely, the natural object to consider is a pure multisoliton in both time directions, which is a solution that is, asymptotically as $t\to+\infty$ and as $t\to-\infty$, a sum of decoupled solitons without radiation (i.e. the radiation term is zero). For non-integrable equations such as \eqref{NLW}, \eqref{NLWN} and \eqref{WMk} below, it is expected that collisions are inelastic and should always generate some radiation (see e.g. \cite{MartelMerle11b,MartelMerle11,MartelMerle15} in the context of generalized Korteweg-de Vries equations and also \cite{MartelMerle18} for \eqref{NLWN} with $N=5$), ruling out the existence of such an object.

To deal with this problem and using fully the finite speed of propagation, the second, third and fourth authors have introduced the concept of \emph{non-radiative} solutions of \eqref{NLWN}. By definition, these are solutions of \eqref{NLWN}, defined for $|x|>R+|t|$, and such that 
\begin{equation}
\label{non-radiative}
\sum_{\pm} \lim_{t\to\pm\infty}  \int_{|x|>R+|t|} \left(|\nabla u(t,x)|^2+(\partial_tu(t,x))^2\right)dx=0.
\end{equation}

The usefulness of this concept is that, using finite speed of propagation, it can be applied by first studying solutions in the exterior of a wave cone $\{|x|>R+|t|\}$, for large $R$, thus restricting to small solutions, that are close to solutions of the linear wave equation. This is connected with the study of lower bounds of the form
\begin{equation}
 \label{lower_bound}
 C\sum_{\pm}\lim_{t\to\pm\infty} \int_{|x|>R+|t|} |\nabla_{t,x} u_L(t,x)|^2dx\geq \int_{|x|>R} (u_1(x))^2+|\nabla u_0(x)|^2dx
\end{equation} 
for radial solutions of the linear wave equation
\begin{equation}
 \label{LWintro}
 \partial_t^2u_L-\Delta u_L=0, \quad (t,x)\in \RR\times \RR^N,
\end{equation} 
with initial data $\vec{u}_{\restriction t=0}=(u_0,u_1)$. Due to finite speed of propagation, the energy space is $\mathcal H_R$ with norm
$$
\| (u_0,u_1)\|_{\mathcal H_R}^2= \int_{|x|\geq R} (|\nabla u_0(x)|^2+u_1^2(x))dx.
$$

The validity of the linear estimate  \eqref{lower_bound} depends strongly on the dimension $N$ (its size and the oddness/evenness). 

\medskip

- \emph{Odd space dimensions:}

In this case, \eqref{lower_bound} for $R=0$ holds for any $(u_0,u_1)\in \HHH$ (see \cite{DuKeMe12} in the non-radial case).

For $R>0$, the dimension $N=3$ was first considered due to the following exceptional property: \eqref{lower_bound} is valid for all radial initial data $(u_0,u_1) \in \HHH_R)$, that are orthogonal to $\left( r^{-1},0 \right)$. This single degenerate direction can be handled with the scaling invariance \eqref{id:def:energyscaling} of the equation, and corresponds to the asymptotics for large $r$ of the stationary solution $W=\left( 1+r^2/3 \right)^{-1/2}$. This leads to the proof of a strong rigidity theorem: for any $R>0$, the solitons $\pm W_{(\lambda)}$ are the only nonzero solutions to \eqref{NLWN} without radiation at infinity in time in the region $\{|x|>R+|t|\}$ (such property is false for $N\geq 5$, see \cite{CoDuKeMe22Pb}). This leads to the soliton resolution for all radial solutions of \eqref{NLWN} with $N=3$ \cite{DuKeMe13}.

For $N$ odd, $N\geq 5$, \eqref{lower_bound} holds in the radial case, for all radial data in an $\frac{N-1}{2}$ co-dimensional subspace of $\HHH_R$, which is not sufficient to deduce a strong rigidity result  for \eqref{NLWN} as in space dimension $3$,   using the scaling invariance of the equation, see \cite{CoDuKeMe22Pb}. The proof of the soliton resolution in this case is more involved: it combines asymptotic estimates on non-radiative solutions  of \eqref{NLW} deduced from \eqref{lower_bound} with a careful study of the modulation equations close to a multisoliton for non-radiative solutions, which gives enough parameters to deal with the large dimension of the counter examples at infinity. Using a gain of decay in space related to the non-radiative property, the last three authors were reduced to study a finite dimensional dynamics of the scaling parameters of the solitons and were able to prove the soliton resolution for all radial solutions of \eqref{NLW} that are bounded in the energy space, for all times \cite{DuKeMe19Pb,DuKeMe21a,DuKeMe20}.

\medskip

- \emph{Even space dimensions}:

The estimate \eqref{lower_bound} is not valid in its full generality, even when $R=0$. In even space dimensions, up to now, no lower bound of the form \eqref{lower_bound} has been known and counter examples are established in \cite{CoKeSc14} and recently in \cite{CoDuKeMe22}. Nevertheless, \eqref{lower_bound} holds in a finite codimension space (at least in the radial case) for initial data of the form $(u_0,0)$ when $N$ has a congruence to $0$ modulo $4$, or $(0,u_1)$ when $N$ has a congruence to $2$ modulo $4$ (see \cite{CoKeSc14}, \cite{DuKeMaMe21P}, \cite{LiShenWei21P}). In each dimension, for the other case (initial data of the form $(0,u_1)$ or $(u_0,0)$ respectively), one can see this failure as a consequence of the existence of an explicit singular resonant non-radiative solution of \eqref{LWintro}, that fails to be in the energy space by a logarithmic factor. A weaker estimate than \eqref{lower_bound} for this other case, with a logarithmic loss, is given in \cite{CoDuKeMe22Pb}.
 
The four dimensional case was first treated by the last three authors and Martel in \cite{DuKeMaMe21P}. This case turns out to be the critical case for the exceptional property mentioned for $N=3$ above: for $R>0$, the solitons are the only radial non-radiative solutions in the region $\{|x|>R+|t|\}$. 

This property is proved in \cite{DuKeMaMe21P} by a delicate analysis based on the separate study of the projections $u_{\pm}(t)=\frac{1}{2}(u(t)\pm u(-t))$ of the solution $u$ on the vector space of odd (respectively even) in time functions, noticing that the equations satisfied by $u_{\pm}$ are decoupled at first order. The soliton resolution for all radial solutions of \eqref{NLWN} with $N=4$ and also the $k=1$ equivariant wave maps follows (see Remark \ref{re:otherequations} for more details).


\subsection{Main results and ideas of proofs}

In this article, we prove the soliton resolution for all times as well as a Liouville Theorem for non-radiative solutions for the semilinear wave equation on $\RR^6$, with the energy-critical focusing nonlinearity (equations \eqref{NLWabs} and \eqref{NLW}). As a by-product of our methods, as in dimension $N=4$ in \cite{DuKeMaMe21P}, this will give the corresponding soliton resolution and rigidity result for the equivariant energy critical wave map ($k=2$) and the energy critical radial Yang-Mills equations (see Remark \ref{re:otherequations}).

To prove these results, one wants to combine the analysis made in four dimensions with that made in odd dimensions $N\geq 5$, which are of completely different natures. Compared to previous works, we have to overcome the following difficulties:\\
- We are in dimension $N=6>4$ and thus we have to deal with the fact that the set of non-radiative solutions in the exterior of a wave cone at the linear level (that are counter-examples to \eqref{lower_bound}) is of dimension greater than $1$ ($2$ in our case). This in fact leads to the existence of nontrivial radial non-radiative solutions at the nonlinear level (different from a soliton) in regions of the type $\{|x|>R+|t|\}$ (see \cite{CoDuKeMe22Pb} and \cite{CoDuKeMe22Pv1}, Proposition 8.1).\\
- To rule out the possibility of the above counter-examples to emerge from solutions on the whole space, we face a reconnection problem, and one has to prove that the non-radiative extensions of these counter-examples to the region $\{|x|> |t|\}$ are not in the energy space.  More precisely, we have to work as in dimensions $N>4$ and odd, using the analysis close to a multisoliton, and to exclude in this context by contradiction the existence of a regular reconnection. This is highly non-trivial.\\

- As opposed to odd dimensions $N\geq 5$ however, lower bounds of the exterior energy are lacking (\eqref{lower_bound} strongly fail\footnote{In the sense that it does not hold true even in a set of finite codimension.} for data of the form $(u_0,0)$), due to the existence of a resonant direction $(r^{-2},0)$ that barely misses the energy space. To tackle this difficulty, we proved
in \cite{CoDuKeMe22} weaker estimates, where the right-hand side of \eqref{lower_bound} is replaced by a weaker norm of the initial data. The fact that these estimates are weaker makes it more difficult to obtain asymptotic expansions at infinity, as well as to justify the modulation analysis and conclude as in odd dimension. \\
- The low degree of regularity of the nonlinearity $|u|u$ makes the analysis delicate in this case. \\

We obtain the following two theorems for radial solutions to Equations \eqref{NLWabs} and \eqref{NLW}. We introduce the set of radial nonzero stationary solutions:
$$
\mathcal W=\left| \begin{array}{l l l} \{ (\iota W_{(\lambda)},0), \ (\iota,\lambda)\in \{-1,+1\}\times (0,\infty)\} & \mbox{ for Equation } \eqref{NLWabs},\\  \{ ( W_{(\lambda)},0), \ \lambda \in (0,\infty)\} & \mbox{ for Equation } \eqref{NLW}. \end{array} \right.
$$

\begin{theorem}[Rigidity for $6D$ radial critical waves]
 \label{T:rigidity}
Assume that $u$ is a spherically symmetric solution of \eqref{NLWabs} (respectively, of \eqref{NLW}) that is global in time and bounded in energy norm:
$$
\sup_{t\in \mathbb R} \int_{\mathbb R^6} \left((\partial_tu(t,x))^2+|\nabla_x u(t,x)|^2\right)dx<\infty,
$$
and whose initial data $(u_0,u_1)\in \mathcal H$ is not a stationary solution of \eqref{NLWabs} (respectively, of \eqref{NLW}) in the sense that $(u_0,u_1)\notin \mathcal W \cup \{(0,0)\}$. Then there exists $R_0,\eta_0>0$ and $t_0\in \mathbb R$ such that the following holds for all $t>t_0$ or for all $t<t_0$:
 \begin{equation}
  \label{channel}
  \int_{|x|>R_0+|t-t_0|}((\partial_tu(t,x))^2+|\nabla_xu(t,x)|^2) dx\geq \eta_0.
 \end{equation} 

 \end{theorem}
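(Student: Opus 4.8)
The plan is to argue by contradiction, following the scheme developed in odd space dimensions in \cite{DuKeMe19Pb,DuKeMe21a,DuKeMe20} and in dimension four in \cite{DuKeMaMe21P}, with the genuinely new inputs being the weak exterior energy estimates of \cite{CoDuKeMe22} and the classification of small-energy non-radiative solutions of \cite{CoDuKeMe22Pb}. Suppose $u$ is as in the statement but that \eqref{channel} fails for every choice of $R_0,\eta_0>0$ and $t_0\in\mathbb R$. After a time translation we may take $t_0=0$, and the negation of the conclusion then provides, for every $R_0>0$ and $\eta_0>0$, times $t^{+}>0$ and $t^{-}<0$ with $\int_{|x|>R_0+|t^{\pm}|}|\nabla_{t,x}u(t^{\pm})|^2\,dx<\eta_0$. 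Using the monotonicity of the exterior energy on $\{\pm t>0\}$ (up to the potential-energy correction in the tail, via the standard flux identity on light cones, which is negligible once $R_0$ is large), and letting $\eta_0\to0$, one upgrades this to: there is $R>0$ with
\begin{equation*}
\sum_{\pm}\lim_{t\to\pm\infty}\int_{|x|>R+|t|}\bigl(|\nabla u(t,x)|^2+(\partial_t u(t,x))^2\bigr)\,dx=0,
\end{equation*}
that is, $u$ is non-radiative in the exterior region $\{|x|>R+|t|\}$ in the sense of \eqref{non-radiative}, and the same holds with $R$ replaced by any $R'\ge R$ by finite speed of propagation.

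Next I would identify this non-radiative object. By the small-energy classification of \cite{CoDuKeMe22Pb}, for $R$ large the restriction of $\vec u$ to $\{|x|>R+|t|\}$ has an explicit asymptotic expansion near spatial infinity, governed by the stationary profile $W$ and by the exceptional resonant profile $(r^{-2},0)$ that barely fails to belong to $\mathcal H$. The delicate \emph{reconnection} step is to propagate this expansion inward to the full light cone $\{|x|>|t|\}$ using the exterior energy estimates of \cite{CoDuKeMe22} — the finite-codimension estimate for the $(0,u_1)$ component, available in dimension six since $6\equiv 2\pmod 4$, and the logarithmically lossy estimate for the $(u_0,0)$ component — and then to match it, along a sequence of times, against the soliton resolution in dimension six of \cite{JiaKenig17}. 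This shows that $u$ is a \emph{pure, non-radiative multisoliton} in both time directions: there are an integer $J\ge 0$, signs $\iota_j$ (with $\iota_j=+1$ for \eqref{NLW}) and scales $\lambda_j(t)>0$ with $\lambda_{j+1}(t)/\lambda_j(t)\to 0$ such that
\begin{equation*}
\Bigl\|\,\vec u(t)-\sum_{j=1}^{J}\bigl(\iota_j W_{(\lambda_j(t))},0\bigr)\Bigr\|_{\mathcal H}\xrightarrow[\;t\to\pm\infty\;]{}0,
\end{equation*}
the only surviving radiation being a finite-dimensional resonant part that enters the list of modulation parameters.

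The heart of the argument is then a finite-dimensional reduction near this configuration. Writing $\vec u(t)=\sum_{j=1}^{J}(\iota_j W_{(\lambda_j(t))},0)+\vec g(t)$ with $\|\vec g(t)\|_{\mathcal H}\to 0$ and imposing orthogonality conditions fixing the scales $\lambda_j(t)$ and the resonant parameters, modulation theory yields differential identities for the $\lambda_j$ with error terms controlled by $\vec g$. The non-radiative property provides the crucial extra spatial decay of $\vec g$ in exterior regions; fed back through the channel-of-energy estimates of \cite{CoDuKeMe22}, this bounds $\vec g$ quantitatively in terms of the ratios $\lambda_{j+1}/\lambda_j$ and closes an autonomous finite-dimensional system of ordinary differential equations for $(\lambda_1(t),\dots,\lambda_J(t))$ valid for all $t\in\mathbb R$. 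The technical cost here is that one must track the logarithmic losses of the six-dimensional exterior estimates precisely enough to dominate the nonlinear error terms, while coping with the limited regularity of the nonlinearity $|u|u$, which constrains the modulation computations.

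Finally I would derive a contradiction from the closed system. As in \cite{DuKeMe19Pb,DuKeMe21a,DuKeMe20}, the signs of the soliton–soliton interaction coefficients are forced, so a suitable sign-definite functional of the scales is monotone along the flow; being also bounded over all of $\mathbb R$, it must be constant, which is incompatible with any configuration except $J\le 1$ with a frozen scale — that is, with $(u_0,u_1)\in\mathcal W\cup\{(0,0)\}$, contrary to hypothesis. Hence \eqref{channel} must hold either for all $t>t_0$ or for all $t<t_0$ with suitable $R_0,\eta_0,t_0$. I expect the main obstacle to be precisely this endgame together with the reconnection step: in dimension six the linear resonance makes the space of exterior non-radiative solutions two-dimensional rather than one-dimensional, so the modulation analysis must carry, and the ODE argument must rule out, the joint dynamics of two families of parameters, all while relying only on the weakened exterior energy estimates of \cite{CoDuKeMe22} and contending with the low-regularity nonlinearity.
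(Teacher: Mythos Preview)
Your proposal is correct and follows essentially the same strategy as the paper: the negation of \eqref{channel} forces $u$ to be a non-radiative pure multisoliton in both time directions, after which the modulation analysis near the multisoliton (powered by the weak channel estimates of \cite{CoDuKeMe22} and the classification of \cite{CoDuKeMe22Pb}) yields the finite-dimensional system of differential inequalities whose study produces the contradiction. The paper organizes this slightly differently---it first proves the full soliton resolution (Theorem \ref{T:main}) and then obtains Theorem \ref{T:rigidity} as a direct by-product of that proof, referring to \cite{DuKeMe19Pb}, Section 7, for the (by now standard) reduction; your first step via the flux identity and approximate monotonicity of the exterior energy is a valid alternative to using the radiation profile $v_L$ of \eqref{R11} to conclude $v_L^{\pm}=0$.
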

 
Note that Theorem  \ref{T:rigidity} implies the fact that the collision of two or more solitons emits some radiation, and thus that there is no pure multisoliton solution of either Equation \eqref{NLWabs} or Equation \eqref{NLW} in the radial case. 

As a consequence of the rigidity Theorem \ref{T:rigidity} and its proof, we obtain the soliton resolution for these equations.

\begin{theorem}[Soliton resolution for radial $6D$ critical waves]
\label{T:main}
 Let $u$ be a radial solution of \eqref{NLWabs} (respectively, of \eqref{NLW}) and $T_+$ be its maximal time of existence. Assume
\begin{equation}
 \label{IntroCarlos1}
\limsup_{t\uparrow T_+} \int_{\mathbb R^6} \left((\partial_tu(t,x))^2+|\nabla_x u(t,x)|^2\right)dx<\infty.
\end{equation} 
 
Then if $T_+<\infty$, there exist $(v_0,v_1)\in \HHH$, an integer $J\in \NN\setminus\{0\}$, and for each $j\in \{1,\ldots,J\}$, a positive function $\lambda_j(t)$ defined for $t$ close to $T_+$ 
such that
$$ 0<\lambda_J(t)\ll \ldots \ll \lambda_1(t)\ll (T_+-t),\quad \mbox{as } t\to T_+,
$$
and signs $(\iota_j)_{1\leq j \leq J}\in \{-1,+1\}^J$ (respectively, the signs are $(\iota_j)_{1\leq j \leq J}\equiv (1,....,1)$ by convention for Equation \eqref{NLW}), such that
\begin{gather}
\label{expansion_u_bup}
\left\|(u(t),\partial_tu(t))-\left(v_{0}+\sum_{j=1}^J\frac{\iota_j}{\lambda_j^2(t)}W\left(\frac{x}{\lambda_{j}(t)}\right),v_{1}\right)\right\|_{\HHH}\underset{t\to T_+}{\longrightarrow} 0.
\end{gather}
If $T_+=+\infty$, there exists a solution $v_{L}$ of the linear wave equation \eqref{LWintro}, an integer $J\in \NN$, and for each $j\in \{1,\ldots,J\}$, a positive function $\lambda_j(t)$ defined for large $t$ such that 
$$ 0<\lambda_J(t)\ll \ldots \ll \lambda_1(t)\ll t, \quad \mbox{as } t\to +\infty$$
and signs $(\iota_j)_{1\leq j \leq J}\in \{-1,+1\}^J$ (respectively, the signs are $(\iota_j)_{1\leq j \leq J}\equiv (1,....,1)$ by convention for Equation \eqref{NLW}), such that
\begin{equation}
\label{expansion}
\Bigg\|(u(t),\partial_tu(t))-\left(v_{L}(t)+\sum_{j=1}^J\frac{\iota_j}{\lambda_j^2(t)}W\left(\frac{x}{\lambda_{j}(t)}\right),\partial_tv_{L}(t)\right)\Bigg\|_{\HHH}%
\underset{t\to+\infty}{\longrightarrow} 0.
\end{equation}
\end{theorem}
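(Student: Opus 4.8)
The plan is to deduce Theorem~\ref{T:main} from the rigidity Theorem~\ref{T:rigidity} by the scheme which, by now, converts a channel‑of‑energy statement into the full soliton resolution. I will carry out the global case $T_+=+\infty$; the blow‑up case $T_+<\infty$ is identical after replacing forward light‑cone exteriors $\{|x|>t-A\}$ by the backward ones $\{|x|>(T_+-t)-A\}$ and the linear radiation by a fixed profile $(v_0,v_1)\in\HHH$ (one may also pass to self‑similar variables), so I will not repeat it. From Theorem~\ref{T:rigidity} I extract the qualitative statement I actually use: a solution of \eqref{NLWabs} (resp.\ \eqref{NLW}) that is global, bounded in $\HHH$, and \emph{non‑radiative} (in the sense of \eqref{non-radiative}, in both time directions, for some $R$) must be stationary, i.e.\ belong to $\mathcal W\cup\{(0,0)\}$ --- otherwise it would have a channel of energy $\geq\eta_0$ in one time direction, contradicting the decay of its exterior energy. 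The other ingredient is the sequential resolution in $6D$ (\cite{JiaKenig17}, building on \cite{DuKeMe12b,Rodriguez16,CoKeLaSc18}): since $\vec u$ is bounded, there exist a solution $v_L$ of \eqref{LWintro}, an integer $J$, signs $(\iota_j)$, scales $\lambda_J(t_n)\ll\dots\ll\lambda_1(t_n)\ll t_n$, and a sequence $t_n\to+\infty$ along which $\vec u(t_n)$ converges in $\HHH$ to $(v_L(t_n)+\sum_j \tfrac{\iota_j}{\lambda_j^2(t_n)}W(\cdot/\lambda_j(t_n)),\partial_tv_L(t_n))$.

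First I would fix notation: set $\vec a(t):=\vec u(t)-\vec v_L(t)$, so that the ``inner part'' $\vec a(t_n)$ converges to a $J$‑bubble configuration. By construction of $v_L$, $\vec a$ carries no forward radiation: for each $A$, $\int_{|x|>t-A}|\nabla_{t,x}\vec a(t)|^2\,dx\to 0$ as $t\to+\infty$. Moreover the cross terms between $\vec a(t)$ and $\vec v_L(t)$ tend to $0$ (the radiation escapes to $|x|\sim t$ while everything else stays in $|x|\lesssim t$), so the energy functional $E$ evaluated at $\vec a(t)$ converges, as $t\to+\infty$, to $E(\vec u(0))-E_{\mathrm{lin}}(v_L)$, and comparison with the sequence $t_n$ identifies this limit as $J\,E(W)$ (a configuration of $J$ decoupled bubbles has energy $J\,E(W)+o(1)$, the sign being irrelevant since $E$ is even in the solution for \eqref{NLWabs}).

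Next comes the upgrade from a sequence of times to all times, which is the main point. Suppose it fails: there are $\varepsilon_0>0$ and $s_n\to+\infty$ with $\operatorname{dist}_\HHH(\vec a(s_n),\mathcal M_J)\geq\varepsilon_0$, where $\mathcal M_J$ is the ``manifold'' of $J$‑bubble states $\{\sum_j(\iota_j/\mu_j^2)W(\cdot/\mu_j):\mu_1\gg\dots\gg\mu_J>0\}$. I would apply the Bahouri--G\'erard profile decomposition to the bounded sequence $\vec a(s_n)$ and evolve it by the nonlinear flow; since $\vec a$ is, near the origin, an approximate solution of \eqref{NLWabs} (the error $|a+v_L|(a+v_L)-|a|a$ is negligible on $\{|x|\ll s_n\}$ because $v_L$ is small there), each nonlinear profile is in the limit a genuine bounded solution, and the smallness of the forward exterior energy of $\vec a(s_n)$ propagates --- via the (weak) exterior energy estimates of \cite{CoDuKeMe22} and the Pythagorean expansion of (exterior) energies --- to the smallness of the exterior energy of every nonlinear profile and of the dispersive remainder. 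Hence each nonlinear profile is a bounded global non‑radiative solution, so by the rigidity statement above it is stationary (a single $\pm W_{(\lambda)}$, at well‑separated scales, or $0$), while the dispersive remainder, being small‑energy hence scattering and also non‑radiative, vanishes; the profiles at intermediate scales are dealt with using the classification of small‑energy non‑radiative solutions of \cite{CoDuKeMe22Pb}. Therefore $\vec a(s_n)$ converges in $\HHH$ to a point of $\mathcal M_{J'}$; by continuity of $E$ and the energy matching of the previous paragraph, $J'E(W)=J E(W)$, so $J'=J$ and $\operatorname{dist}_\HHH(\vec a(s_n),\mathcal M_J)\to0$, a contradiction. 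Thus $\operatorname{dist}_\HHH(\vec a(t),\mathcal M_J)\to0$ as $t\to+\infty$. Finally, once $\vec a(t)$ is trapped in an arbitrarily thin tube around $\mathcal M_J$, I would run the modulation analysis of the proof of Theorem~\ref{T:rigidity} --- decomposing $\vec a(t)=\sum_j(\iota_j/\lambda_j^2(t))W(\cdot/\lambda_j(t))+\vec g(t)$, deriving the ODE system for the $\lambda_j(t)$, and exploiting the gain of spatial decay coming from the approximate non‑radiativity --- now as a convergence statement rather than to reach a contradiction, obtaining $\vec g(t)\to0$ in $\HHH$ and the separation $\lambda_J(t)\ll\dots\ll\lambda_1(t)\ll t$. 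Together with the definition of $\vec a$ this is exactly \eqref{expansion}, and \eqref{expansion_u_bup} follows the same way.

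The step I expect to be the main obstacle is the limiting/compactness argument in the upgrade, precisely for the reasons flagged in the introduction: in dimension $6$ the exterior energy lower bounds from \cite{CoDuKeMe22} are weaker than the odd‑dimensional estimate \eqref{lower_bound} (which fails outright, even in finite codimension, for data $(u_0,0)$ because of the resonant direction $(r^{-2},0)$), so the control of the exterior energy of the profiles and of the nonlinear remainder --- hence the non‑radiativity of the limiting solutions --- is delicate; and the nonlinearity $|u|u$ is only $C^{1,1}$, so the perturbation theory used to pass to nonlinear profiles and to close the modulation system must be carried out with the refined tools of \cite{CoDuKeMe22,CoDuKeMe22Pb}. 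This is where ``the proof of Theorem~\ref{T:rigidity}'', and not merely its statement, is genuinely used.
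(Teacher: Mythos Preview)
Your plan inverts the logical dependence between the two theorems and, more importantly, relies on applying Theorem~\ref{T:rigidity} to objects to which it does not apply.

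\medskip

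\textbf{The inversion.} In the paper, Theorem~\ref{T:rigidity} is obtained \emph{from} the proof of Theorem~\ref{T:main} (see \S4.7), not the other way around. So you cannot invoke it as a black box here.

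\medskip

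\textbf{The actual gap.} You want to run a profile decomposition on $\vec a(s_n)$, argue that each nonlinear profile is a global, bounded-in-$\HHH$, non-radiative solution, and then conclude via Theorem~\ref{T:rigidity} that it is stationary. But the nonlinear profiles you obtain are only known to be well-defined on the exterior $\{|x|>|t|\}$ (this is exactly what Lemma~\ref{L:expansion} provides): nothing prevents them from blowing up in finite time inside the cone, or from failing to be bounded in $\HHH$. Theorem~\ref{T:rigidity} is a statement about \emph{global bounded} solutions and simply does not apply. This is not a technicality: as the paper emphasizes (and as \cite{CoDuKeMe22Pb} establishes), for $N\ge 5$ there \emph{are} non-radiative solutions on $\{|x|>R+|t|\}$ which are not stationary, so non-radiativity on an exterior cone does not force a profile to be a single bubble. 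Your sentence ``each nonlinear profile is in the limit a genuine bounded solution'' is precisely the unjustified step.

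\medskip

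\textbf{What the paper actually does.} The contradiction is \emph{not} obtained by identifying the profiles via a rigidity theorem. Instead, one stays entirely in the modulation framework on the interval $[\tilde t_n,t_n]$ where $d_J<\eps_0$. The non-radiative profiles $V^k$ of Lemma~\ref{L:expansion} (defined only on $\{|x|>|t|\}$) are used through Proposition~\ref{P:nonradiative} and Proposition~\ref{P:lower_bound}: from the classification in \cite{CoDuKeMe22Pb} one extracts an asymptotic parameter $\ell\neq 0$ for the non-stationary profile $V^{\tilde J}$, and this yields the crucial lower bound \eqref{Re30} on the exterior scale $\lambda_{\tilde J}$. Combined with the sharp modulation estimates \eqref{Re21}--\eqref{Re22} (which require the channel-of-energy estimates for the linearized operator around a multisoliton, Proposition~\ref{pr:channels}, to control $h$ in $Z_{-2,\lambdabf}$ and $g_1$ in $L^2$), one obtains the differential inequalities of Proposition~\ref{P:contra}. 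The contradiction then comes from the ODE analysis (Proposition~\ref{P:SD1}), which shows that such a system cannot persist on an interval of length $T\lambda_{\tilde J}(\tilde t_n)$ for $T$ large. Your sketch treats this entire mechanism as an afterthought (``run the modulation analysis \ldots now as a convergence statement''), but it is the whole proof: there is no separate compactness step that reduces the profiles to solitons.
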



\begin{remark} \label{re:otherequations}
Similar problems to the radial energy critical wave equation are the radial energy critical Yang-Mills equation
\begin{equation}
 \label{YM}
 \partial_t^2u -\partial_r^2u-\frac{1}{r}\partial_ru+\frac{2u(1-u^2)}{r^2}=0,
 \end{equation}
 and the $k$-equivariant wave maps from Minkowski space into the two-sphere which corresponds to solutions of the following equation:
\begin{equation}
 \label{WMk}
 \partial_t^2u -\partial_r^2u-\frac{1}{r}\partial_ru+k^2\frac{\sin(2u)}{2r^2}=0.
 \end{equation}
 Our proof extends readily to Equation \eqref{YM} and Equation \eqref{WMk} with $k=2$, establishing the analogues of the rigidity Theorem \ref{T:rigidity} and of the soliton resolution Theorem \ref{T:main}.

 The case $k=1$ for \eqref{WMk} was first treated by the last three authors and Martel in \cite{DuKeMaMe21P} by the same methods than for the radial critical wave equation \eqref{NLWN} in dimensions $N=4$ (both problems are similar at the linear level). 
 
We also expect that the present methods in dimension $N=6$ extend to higher even dimensions $N$. Note that the new channel of energy estimates are proved in dimensions $6$ and $8$ in \cite{CoDuKeMe22}, but are expected to hold in higher even dimensions, and that 
the classification result holds for all even dimensions for analytic non-linearities, as proved in \cite{CoDuKeMe22Pb}.
\end{remark}

\begin{remark}
Solutions that scatter to linear waves (\eqref{expansion} with $J=0$) and the stationary solutions are examples of solutions to \eqref{NLWabs} for which \eqref{expansion} holds with $J=0$ and $J=1$ respectively.
The construction of a global radial two-soliton ($J=2$) of \eqref{NLWN} with $N=6$ is done in \cite{Jendrej19}. We conjecture that a similar construction can be done for $J>2$ in either the blow-up case or the global case, depending on the dimension, for \eqref{NLWN}. We refer to \cite{DelPinoMussoWei21} for such a construction for the energy-critical heat equation.

The set of initial data whose corresponding solution verifies \eqref{IntroCarlos1} with dynamics as in  Theorem \ref{T:main}, with $J\geq 1$, is expected to be of codimension $1$ (in some sense). The precise description of this set, depending on the space dimension, both in the global in time and and the finite time blow-up cases, is a very delicate open question. See for example
\cite{RaphaelRodnianski12}, \cite{GaoKrieger15}, \cite{RodnianskiSterbenz10}, \cite{Pillai2019P}, \cite{Pillai2020P} and \cite{JeJaLa19P} for the case of wave maps, and \cite{KrScTa09}, \cite{HiRa12}, \cite{KrSc14}, \cite{Jendrej17}, etc, for \eqref{NLWN}.
\end{remark} 
\begin{remark}
 The decomposition result of \cite{DuKeMaMe21P} for $1$-equivariant (co-rotational) wave maps, mentioned in Remark \ref{re:otherequations}, was later extended to all $k\geq 1$, by Jendrej and Lawrie \cite{JendrejLawrie21P}.  The general strategy of \cite{JendrejLawrie21P} is similar to the one introduced in  \cite{DuKeMe20,DuKeMe21a,DuKeMe19Pb}, of proving the inelastic collision of solitons as in the current paper. This strategy gives the passage from a sequential decomposition to a continuous in time one. In \cite{JendrejLawrie21P}, the mechanism for proving the inelastic collision of solitons is not through a rigidity theorem (say in the style of Theorem \ref{T:rigidity}), but through the use of modulation equations (introduced by these authors in a similar context in their work \cite{JendrejLawrie18} on ``two-bubble dynamics for threshold solutions'') combined with a delicate ``no return analysis'', in the neighborhood of a multisoliton, inspired by earlier works in the neighborhood of a single soliton, due to Duyckaerts-Merle \cite{DuMe08}, Nakanishi-Schlag \cite{NaSc11} and Krieger-Nakanishi-Schlag \cite{KrNaSc13b}, \cite{KrNaSc15}. The article \cite{JendrejLawrie21P} was preceded by the works \cite{JendrejLawrie20Pb}, \cite{JendrejLawrie22}, \cite{JendrejLawrie20Pc} for the case of $2$-solitons. In comparing both approaches to establishing the inelastic collision of solitons, one should point out that a rigidity theorem, in the style of Theorem \ref{T:rigidity}, gives quantitative control of the radiation generated by the inelastic collision, coming from the ``outer energy'' lower bound in (say) \eqref{channel}. On the other hand, both approaches yields the non-existence of ``pure multisolitons'', which are solutions which exist for all times such that the ``radiation terms'' $v_L$ in \eqref{expansion}, going to $t=\pm\infty$ are both zero, a fact that reflects the non-elastic collision. Moreover, the ``no return'' approach of \cite{JendrejLawrie21P} bypasses linear estimates such as \eqref{lower_bound}, whose validity in even space dimensions holds for only ``half'' the data. However, as it turns out,  this objection is now removed by \cite{CoDuKeMe22}, in which new estimates in the style of \eqref{lower_bound} are obtained, valid also for all data in even dimension, and which suffice to yield both the rigidity theorem and the full soliton decomposition.   
 
 After the first version of this paper was posted on arXiv in early January 2022, Jendrej and Lawrie \cite{JendrejLawrie22P} posted a new paper on arXiv in March 2022, in which they extend the full soliton resolution proved in Theorem \ref{T:main} for equation \eqref{NLW}, to radial solutions of \eqref{NLWN}, for all $N\geq 4$. The approach in \cite{JendrejLawrie22P} is to prove the inelastic collision of solitons (as in \cite{DuKeMe19Pb}) by the ``no return'' method, as in \cite{JendrejLawrie21P}. See also our discussion in the introduction of \cite{CoDuKeMe22Pb}.
\end{remark}

This paper is a revised version of \cite{CoDuKeMe22Pv1}. It differs from \cite{CoDuKeMe22Pv1} in that Section 3 of that version has been removed and an extension of these results is now available in \cite{CoDuKeMe22}, and in that Sections 4,5 and 8 of \cite{CoDuKeMe22Pv1} have been removed and extensions of these results are now available in \cite{CoDuKeMe22Pb}. In addition, a new version of the crucial Proposition 4.3 in \cite{CoDuKeMe22Pv1}, now using the results in \cite{CoDuKeMe22Pb}, is provided in Proposition \ref{P:nonradiative} of this version.

\subsection{Novelties}

As mentioned earlier, \eqref{lower_bound} holds  for $N$ odd in the radial case, for all initial data in a finite co-dimensional subspace whose co-dimension is $\frac{N-1}{2}$. When $N=3$, the co-dimension is $1$, and this allows for a stronger rigidity statement than in Theorem \ref{T:rigidity}, where $R_0$ is arbitrary. When $N$ is odd, $N\geq 5$, since the co-dimension is larger than $1$, the proofs of the analog of the rigidity theorem (Theorem \ref{T:rigidity}) and of the full soliton resolution, given in \cite{DuKeMe20,DuKeMe21a,DuKeMe19Pb} are much more complicated, involving the study of the modulation parameters through the use of \eqref{lower_bound}. When $N=4$ (see \cite{DuKeMaMe21P} and \cite{LiShenWei21P}), \eqref{lower_bound} holds for data in the form $(u_0,0)$, when $u_0$ is orthogonal to the Newtonian potential $1/r^2$, which is again a co-dimension $1$ subspace. To overcome the lack of any lower bound in \eqref{lower_bound} for data of the form $(0,u_1)$, a new nonlinear object was found, which is an approximate nonlinear solution with data $(0,u_1)$, with $u_1$ barely not in $L^2$, and whose $t$ derivative is non-radiative. Using the analog of \eqref{lower_bound} just described, and this approximate solution to deal with data of the form $(0,u_1)$, a rigidity theorem in the style of Theorem \ref{T:rigidity} was found in \cite{DuKeMaMe21P}, valid for any $R_0>0$, just as in the $N=3$ case, and from this the full soliton resolution followed. 

When $N=6$, the analog of \eqref{lower_bound}, established in \cite{DuKeMaMe21P} (see also \cite{LiShenWei21P} and Proposition \ref{P:CK1.4} below), holds for data of the form $(0,u_1)$, with $u_1$ orthogonal in $L^2(\{r>R\})$ to $1/r^4$, the Newtonian potential in $\RR^6$. This is, of course, again ``half'' the data, as in $\RR^4$. However, in addition, $(0,1/r^4)$ does not ``correspond'' to the initial data of a static solution, as $(1/r^2,0)$ does in $\RR\times \RR^4$. This is similar to the $N=5$ situation, and hence a combination of the methods of \cite{DuKeMe20,DuKeMe21a,DuKeMe19Pb} and \cite{DuKeMaMe21P} is required. But, in order to obtain the estimates on the modulation parameters needed in \cite{DuKeMe19Pb}, in order to prove the analog of Theorems \ref{T:rigidity} and \ref{T:main}, dispersive estimates of the type of \eqref{lower_bound} were still needed. This issue was resolved in \cite{CoDuKeMe22}. This is through the use of a weaker version of \eqref{lower_bound}, which still gives quantitative dispersive estimates, but with a logarithmic loss, valid in even dimensions, for data in the complement of a finite dimensional space valid for solutions of the linearized equation around the soliton $W$ (see \S \ref{subsec:channelsunsoliton}) and for solutions of the linearized equation around a multisoliton (see \S \ref{subsec:channels_multi}). These (weaker) dispersive estimates, valid for all data (up to finite co-dimension), for all even dimensions, suffice to obtain the needed estimates for the modulation parameters, and establish Theorem \ref{T:rigidity} and Theorem \ref{T:main}. 

Another important novelty of this work, fully developed in the companion paper \cite{CoDuKeMe22Pb}, explains the difference between the stronger rigidity theorems valid when $N=3,4$, and the slightly weaker version given in Theorem \ref{T:rigidity} and its analog in \cite{DuKeMe19Pb}, valid for a chosen $R_0$. 
The work \cite{CoDuKeMe22Pb} classifies in dimensions $N\geq 3$ non-radiative solutions in regions of the form $\{r>R+|t|\}$, showing they belong to an $\lfloor \frac{N-1}{2}\rfloor$ dimensional family of solutions. For $N\geq 5, \lfloor \frac{N-1}{2}\rfloor\geq 2$ and there exist non-radiative solutions that are not stationary solutions, unlike the cases $N=3,4$, and hence the stronger version of Theorem \ref{T:rigidity} fails when $N\geq 5$ (see also \cite{CoDuKeMe22Pv1}, Proposition 8.1, for $N=6$).

We believe that the new ideas explained above and developed in this paper and in \cite{CoDuKeMe22Pb}, \cite{CoDuKeMe22} will have a wide range of applicability.

\section{Acknowledgements}

This work was supported by the National Science Foundation [DMS-2153794 to C.K.]; the CY Initiative of Excellence Grant
"Investissements d'Avenir" [ANR-16-IDEX-0008 to C.C. and F.M.]; and the France and Chicago Collaborating in the Sciences [FACCTS award \#2-91336 to C.C. and F.M.]

\section{Preliminaries}
In this preliminary section, we recall results on local-wellposedness for equations \eqref{NLWabs} and \eqref{NLW} and channels of energy estimates for the linearized equation around a multisoliton (from \cite{CoDuKeMe22}. We also prove, as a consequence of the classification non-radiative solutions  for equations \eqref{NLWabs} and \eqref{NLW} outside wave cones obtained in \cite{CoDuKeMe22Pb}, some asymptotic estimates on non-radiative solutions that are crucial in the proof of Theorems \ref{T:rigidity} and \ref{T:main}. 
\subsection{Notations and local well-posedness}

We start with some notation.
If $u$ is a function of space and time, we write $\vec{u}=(u,\partial_tu)$. 

For $R\geq 0$, $p\in (1,\infty)$ we write 
\begin{gather*}
\|u\|^p_{L^p_R}=\int_{R}^{\infty} (u(r))^pr^5dr,\quad 
\|u\|^2_{\dot{H}^1_R}=\int_{R}^{\infty}(\partial_ru(r))^2r^5dr,\\
\|\psi\|_{L^2(R,\infty)}=\int_{R}^{\infty}(\psi(r))^2dr.
\end{gather*}
\begin{remark}
\label{R:ext}
Let $R>0$ and $u$ be a radial function defined for $r>R$. Then the extension $u_R$ of $u$ defined by
$$ u_R(r)=u(r),\; r>R,\quad u_R(r)=3u(2R-r)-2u(3R-2r), \; 0<r<R,$$
satisfies, for all $p\geq 1$
$$\|u_R\|_{L^p(\RR^6)}\leq C\|u\|_{L^p_R},\quad \|\partial_ru_R\|_{L^p(\RR^6)}\leq C\|\partial_r u\|_{L^p_R}$$
where the constant $C$ is independent of $u$, $p$ and $R$.
\end{remark}

For $(t,R)\in \Rb\times (0,\infty)$, we let 
$$\Ce_{t,R}=\Big\{ \left(\overline{t},\overline{r}\right)\in \RR\times (0,\infty)\,:\, \overline{r}>R+|t-\overline{t}|\Big\}$$
be the exterior cone. The exterior energy is 
$$\|u\|_{E_{t,R}}=\sup_{\overline{t}\in \RR} \left\|\vec{u}\left( \overline{t} \right)\right\|_{\dot{H}^1_{R+|t-\overline{t}|}\times L^2_{R+|t-\overline{t}|}}.$$
We will write $u\in C_t(I,\dot{H}^1_{R_0+|t-t_0|})$ when $u$ is the restriction to $\Ce_{t_0,R_0}$ of a function $u\in C(I,\dot{H}^1)$. We will use a similar notation, with the same meaning, for other time dependent spaces (e.g. $L^2_{R+|t-t_0|}$).

We introduce the Strichartz norms:
\begin{equation}
 \|u\|_{L^p_tL^q_r(r>R+|t|)}=\left( \int_{\overline{t}\in \RR} \left( \int_{r>R+|t-\overline{t}|}\left|u(\overline{t},r)\right|^qr^5dr \right)^{\frac pq}d\overline{t} \right)^{\frac 1p}=\|u\|_{L^pL^q\left( \Ce_{t,R} \right)}.
\end{equation} 
We will also need Strichartz norms over Besov spaces. We will follow the definitions and results in sections 1,2 of \cite{DuKeMe21a}. 
\begin{gather*}
\sfS:= L^{\frac 72}(\RR^{7}),\quad 
\sfW:=L^{\frac{14}{5}}\left( \RR,\dot{B}^{\frac{1}{2}}_{\frac{14}{5},2} (\RR^6) \right)\\ 
\sfW':= L^{\frac{14}{9}}\left(\RR,\dot{B}^{\frac{1}{2}}_{\frac{14}{9},2}(\RR^6) \right).
\end{gather*}
We recall the Strichartz estimates: if $(u_0,u_1)\in \dot{H}^1\times L^2(\RR^6)$ and 
$$u(t)=\cos t\sqrt{-\Delta}u_0+\frac{\sin t\sqrt{-\Delta}}{\sqrt{-\Delta}}u_1+\int_0^t \frac{\sin(t-t')}{\sqrt{-\Delta}} f(t')dt',$$
with $f=f_1+f_2$, where $f_1\in W'$ and $f_2\in L^1_tL^2_r(\RR^6)$, we have
\begin{equation}
\label{CK1}
 \sup_{t\in \RR}\left\|\vec{u}(t)\right\|_{\dot{H}^1\times L^2}+\|u\|_{\sfS}+\|u\|_{\sfW}+\|u\|_{L^2_tL^{4}_r}
 \lesssim \|(u_0,u_1)\|_{\dot{H}^1\times L^2}+\|f_1\|_{\sfW'}+\|f_2\|_{L^1_tL^2_r}.
\end{equation}
By definition, a solution of \eqref{NLWabs} (respectively \eqref{NLW}) with $t_0\in I$ and $\vec{u}_{\restriction t=t_0}=(u_0,u_1)$ is a $u\in C(I,\dot{H}^1)$, with $\partial_t u\in C(I,L^2)$ such that 
$$\forall t\in I,\quad u(t)=S_L(t-t_0)(u_0,u_1)+\int_{t_0}^{t} \frac{\sin\left( (t-s)\sqrt{-\Delta} \right)}{\sqrt{-\Delta}} F(u(s))ds,$$
with $F(u)=|u|u$ (respectively $F(u)=u^2$).
\begin{proposition}[local well-posedness]
\label{P:LWP}
There exists a constant $\theta$, $0<\theta<1$ such that the following holds.
 Assume $\|(u_0,u_1)\|_{\dot{H}^1\times L^2}\leq A$. If $\|S_L(t)(u_0,u_1)\|_{\sfS}\leq \eta$, for $\eta\leq \eta(A)$ small enough, then there exists a unique solution $u$ of \eqref{NLWabs} (respectively \eqref{NLW}) in $C(\RR,\dot{H}^1\times L^2)$, and
 $$ \forall t\in \RR,\quad \left\|\vec{u}(t)-\vec{S}_L(t)(u_0,u_1)\right\|_{\dot{H}^1\times L^2} \leq C\eta^{\theta}A^{1-\theta}.$$
\end{proposition}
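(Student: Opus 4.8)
\emph{Proof sketch.} The plan is a contraction–mapping argument, run in a complete metric space built from the Strichartz norms of \eqref{CK1} — crucially including the Besov–Strichartz norms $\sfS$, $\sfW$, $\sfW'$ and the norm $L^2_tL^4_r$, which are tailored to the low regularity of the nonlinearity $F(u)=|u|u$ (resp. $u^2$) at the energy-critical scaling in dimension $6$. Fixing $A$ and writing $C_0$ for the constant in \eqref{CK1}, one works in
$$\mathcal B=\Big\{u:\ \|u\|_{\sfS}\le 2\eta,\quad \|u\|_{\sfW}+\|u\|_{L^2_tL^4_r}+\sup_{t\in\RR}\|\vec u(t)\|_{\dot H^1\times L^2}\le 2C_0A\Big\},$$
with the distance induced by the same norms on $u-v$ (which makes $\mathcal B$ complete), seeking a fixed point of $\Phi(u)(t)=S_L(t)(u_0,u_1)+\int_0^t\frac{\sin((t-s)\sqrt{-\Delta})}{\sqrt{-\Delta}}F(u(s))\,ds$. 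The linear term already lies in $\mathcal B$ with room to spare: $\|S_L(\cdot)(u_0,u_1)\|_{\sfS}\le\eta$ by hypothesis, and its remaining norms are $\le C_0A$ by \eqref{CK1}.

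The crux is the nonlinear estimate. Applying \eqref{CK1} with $f_1=F(u)\in\sfW'$ and $f_2=0$, it suffices to bound $\|F(u)\|_{\sfW'}$. Since $F$ is quadratic, Hölder gives at the Lebesgue level $\|u^2\|_{L^{14/9}(\RR^7)}\lesssim\|u\|_{\sfS}\,\|u\|_{L^{14/5}(\RR^7)}$ (as $\tfrac27+\tfrac{5}{14}=\tfrac{9}{14}$); promoting this to the $\dot B^{1/2}_{14/9,2}$ level of $\sfW'$ is done via the fractional chain rule, using that $F'(w)=2|w|$ is globally Lipschitz — precisely the point where the merely $C^{1,1}$ regularity of $|u|u$ in dimension $6$ is borderline but still admissible for a derivative of order $1/2<1$ — yielding schematically
$$\|F(u)\|_{\sfW'}\lesssim\|u\|_{\sfS}\,\|u\|_{\sfW}.$$
The same argument applied to $F(u)-F(v)=(u-v)\int_0^1F'(v+\tau(u-v))\,d\tau$, together with $|F(u)-F(v)|\lesssim(|u|+|v|)|u-v|$, gives $\|F(u)-F(v)\|_{\sfW'}\lesssim(\|u\|_{\sfS}+\|v\|_{\sfS})\|u-v\|_{\sfW}+(\|u\|_{\sfW}+\|v\|_{\sfW})\|u-v\|_{\sfS}$. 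Feeding these into \eqref{CK1}, on $\mathcal B$ one obtains $\|\Phi(u)\|_{\sfS}\le\eta+CC_0A\,\eta$, $\|\Phi(u)\|_{\sfW}+\cdots\le C_0A+CC_0A\,\eta$, and contraction factor $O(A\eta)$; hence for $\eta\le\eta(A)$ with $C(1+C_0A)\eta(A)\le\tfrac12$, $\Phi$ is a contraction of $\mathcal B$ into itself, whose unique fixed point is a solution, global since its $\sfS$-norm is globally finite.

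For the quantitative bound, $\vec u(t)-\vec S_L(t)(u_0,u_1)$ is the Duhamel term with forcing $F(u)$, so by \eqref{CK1} and the nonlinear estimate $\sup_t\|\vec u(t)-\vec S_L(t)(u_0,u_1)\|_{\dot H^1\times L^2}\le C_0\|F(u)\|_{\sfW'}\lesssim\|u\|_{\sfS}\|u\|_{\sfW}\lesssim\eta A$. For any fixed $\theta\in(0,1)$ this is $\le C\eta^\theta A^{1-\theta}$ once $\eta\le\eta(A)$ is small enough relative to $A$ (since $\eta A=\eta^\theta A^{1-\theta}\cdot\eta^{1-\theta}A^\theta$ and $\eta^{1-\theta}A^\theta\le1$ for $\eta\le A^{-\theta/(1-\theta)}$), so shrinking $\eta(A)$ delivers the stated estimate with this $\theta$. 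Uniqueness in $C(\RR,\dot H^1\times L^2)$ (not merely in $\mathcal B$) follows from the usual continuity argument: any such solution with data $(u_0,u_1)$ has, near $0$, $\sfS$-norm below the fixed-point threshold, hence coincides there with the fixed point; covering $\RR$ by finitely many intervals of small $\sfS$-norm propagates the coincidence.

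The main obstacle is the nonlinear estimate in the Besov–Strichartz spaces $\sfW,\sfW'$ for the non-smooth nonlinearity: one cannot differentiate $|u|u$ twice, so the Leibniz manipulations available for smooth power nonlinearities must be replaced by the fractional chain rule at regularity $1/2$, applied both to $F(u)$ and — more delicately — to the difference $F(u)-F(v)$ through the integral representation above, exploiting only that $w\mapsto|w|$ is Lipschitz. This is exactly why the spaces $\sfS,\sfW,\sfW'$ of \cite{DuKeMe21a}, rather than plain $L^p_tL^q_r$ Strichartz spaces, are used; the rest is routine.
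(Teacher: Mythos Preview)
The paper does not give a proof of this proposition: it only refers to \cite{BuCzLiPaZh13} for the result and to \cite{DuKeMe21a} for unconditional uniqueness in the radial case. So there is nothing in the paper to compare against, and your sketch is an attempt to supply the omitted argument. The general strategy you describe (contraction in the Besov--Strichartz spaces $\sfS,\sfW,\sfW'$, fractional Leibniz/chain rule to handle the merely $C^{1,1}$ nonlinearity $|u|u$) is indeed the right framework, and it is precisely the setup of those references.

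There is, however, a real gap in how you close the fixed point when $A$ is large. With your ball $\mathcal B=\{\|u\|_{\sfS}\le 2\eta,\ \|u\|_{\sfW}\le 2C_0A\}$ and the bilinear bound $\|F(u)\|_{\sfW'}\lesssim\|u\|_{\sfS}\|u\|_{\sfW}$, you correctly compute $\|\Phi(u)\|_{\sfS}\le\eta(1+CA)$; but demanding this be $\le 2\eta$ forces $CA\le 1$, a constraint on $A$ that no choice of $\eta(A)$ can enforce. The Lipschitz estimate has the same defect: your difference bound produces a term $(\|u\|_{\sfW}+\|v\|_{\sfW})\|u-v\|_{\sfS}\sim A\,\|u-v\|_{\sfS}$, so the contraction factor is $O(A)$, not $O(A\eta)$ as you claim. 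This is exactly the obstruction that distinguishes $N\ge 6$ (critical power $p=2$, hence only \emph{one} factor of the small norm $\sfS$ in the product estimate) from $3\le N\le 5$, and it is the reason a dedicated argument such as \cite{BuCzLiPaZh13} is needed. One remedy is to run the contraction in an interpolated norm $X=[\sfS,\sfW]_{1/2}$: bilinear interpolation of the two product bounds (roles of $\sfS$ and $\sfW$ swapped) yields $\|F(u)\|_{\sfW'}\lesssim\|u\|_X^2$, while $\|u_L\|_X\le\|u_L\|_{\sfS}^{1/2}\|u_L\|_{\sfW}^{1/2}\lesssim(\eta A)^{1/2}$; the ball $\{\|u\|_X\le 2(\eta A)^{1/2}\}$ then closes under the genuinely $\eta$-dependent condition $\eta A\ll 1$, and the Duhamel term is $\lesssim\|u\|_X^2\lesssim\eta A$, which under that smallness is $\le C\eta^{\theta}A^{1-\theta}$ for a \emph{specific} $\theta$ (here $\theta=\tfrac12$), as in the statement --- rather than for ``any fixed $\theta$'' as your last paragraph suggests.
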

For this result, see for example \cite{BuCzLiPaZh13}. In the radial case the unconditional uniqueness holds in all dimensions, see \cite{DuKeMe21a}. As a consequence of Proposition  \ref{P:LWP}, we have
\begin{multline}
 \label{CK+}
 \|(u_0,u_1)\|_{\dot{H}^1\times L^2}\leq \ep_0\\
 \Longrightarrow 
 \sup_t \|\vec{u}(t)\|_{\dot{H}^1\times L^2}+ \|u\|_{\sfS}+\|u\|_{\sfW}+\|u\|_{L^2_tL^{4}}\lesssim\|(u_0,u_1)\|_{\dot{H}^1\times L^2}.
\end{multline} 
We will next recall the theory for exterior cones, from \cite[Section 2]{DuKeMe21a}. If $\Omega$ is an open set in $\RR^d$, ($d=6$ or $d=7$), and $A$ is a Banach space of distributions in $\RR^d$, we set $\|u\|_{A(\Omega)}=\inf_{\overline{u}} \|\overline{u}\|_{A}$, here the infimum is taken over all $\overline{u}$ such that $\overline{u}_{\restriction \Omega}=u$ (where ${\cdot}_{\restriction \Omega}$ means the restriction in the sense of distributions).

We recall from \cite[Lemma 2.3]{DuKeMe21a} that the characteristic function $\indic_{\{|x|>R\}}$ is a multiplier of $\dot{B}^{1/2}_{\frac{14}{9},2}(\RR^6)$ (to itself) and that $\indic_{\Ce_{0,R_0}}$ is a pointwise multiplier from $W'$ to itself. As a corollary we have, for any $R_0>0$ with $F(u)=u^2$, 
$$\left\|\indic_{\Ce_{0,R_0}}F(u)\right\|_{\sfW'}\lesssim \|u\|_{\sfS\left( \Ce_{0,R_0} \right)}\|u\|_{\sfW\left( \Ce_{0,R_0} \right)}$$
(\cite[Remark 2.4]{DuKeMe21a}).
\begin{definition}
 Let $(u_0,u_1)\in \dot{H}_{R_0}\times L^2_{R_0}$. A solution $u$ of \eqref{NLW} on $\Ce_{0,R_0}$ with initial data $(u_0,u_1)$ is the restriction to $\Ce_{0,R_0}$ of a solution $\tilde{u}\in C\left( \RR,\dot{H}^1 \right)$, with $\partial_t\tilde{u}\in C\left( \RR,L^2 \right)$ of the equation $\partial_t^2\tilde{u}-\Delta\tilde{u}=\tilde{u}^2\indic_{\Ce_{0,R_0}}$ with initial data $(\tilde{u}_0,\tilde{u}_1)\in \dot{H}^1\times L^2$, where $(\tilde{u}_0,\tilde{u}_1)$ is an extension of $(u_0,u_1)$. 
\end{definition}
Note that by finite speed of propagation, $u_{\restriction \Ce_{0,R_0}}$ does not depend on the extension $(\tu_0,\tu_1)$. 
\begin{lemma}
\label{L:CK1.5}
 For $A>0$, there exists $\eta=\eta(A)$ such that if $$(u_0,u_1)\in \dot{H}^1_{R_0}\times L^2_{R_0}, \; \|(u_0,u_1)\|_{\dot{H}^1_{R_0}\times L^2_{R_0}}\leq A\text{ and }\|S_L(t)(u_0,u_1)\|_{\sfS\left( \Ce_{0,R_0} \right)}\leq \eta,$$ then there exists a unique solution $u$ to \eqref{NLWabs} (respectively \eqref{NLW}) in $\Ce_{0,R_0}$ and 
 $$ \forall t,\quad \left\|\vec{u}(t)-S_L(t)(u_0,u_1)\right\|_{\dot{H}^1_{R_0+|t|}\times L^2_{R_0+|t|}}\leq C\eta^{\theta}A^{1-\theta}.$$ 
 (Above $\vec{S}_L(t)(u_0,u_1)_{\restriction \Ce_{0,R_0}}=\vec{S}_L(t)(\tu_0,\tu_1)$ for any extension $(\tu_0,\tu_1)$ of $(u_0,u_1)$). 
 Moreover, the corresponding estimates to \eqref{CK+} hold, that is:
\begin{equation}
 \label{CK+local}
 \|\vec{u}(t)\|_{E_{t_0,R_0}}+ \|u\|_{\sfS(\Ce_{t_0,R_0})}+\|u\|_{\sfW(\Ce_{t_0,R_0})}+\|u\|_{L^2_tL^{4}(\Ce_{t_0,R_0})}\lesssim\|(u_0,u_1)\|_{\dot{H}^1_{R_0}\times L^2_{R_0}}.
\end{equation}  
\end{lemma}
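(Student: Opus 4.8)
The plan is to deduce the exterior-cone statement from the global Strichartz theory of Proposition \ref{P:LWP} together with the finite-speed-of-propagation multiplier properties recalled just above the lemma. First I would fix an extension $(\tu_0,\tu_1)\in \dot H^1\times L^2(\RR^6)$ of $(u_0,u_1)$ with $\|(\tu_0,\tu_1)\|_{\dot H^1\times L^2}\lesssim A$, using the explicit reflection operator of Remark \ref{R:ext} (applied to $u_0$, and to an antiderivative-type construction for $u_1$, i.e. the extension acting on the profiles so that the $\dot H^1_{R_0}\times L^2_{R_0}$ norm controls the $\dot H^1\times L^2(\RR^6)$ norm of the extension). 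The key point is that, by finite speed of propagation, the restriction to $\Ce_{0,R_0}$ of any solution to the \emph{truncated} equation $\partial_t^2\tilde u-\Delta\tilde u=F(\tilde u)\indic_{\Ce_{0,R_0}}$ depends only on $(u_0,u_1)$ and not on the chosen extension, so it suffices to run a fixed-point argument for this truncated equation in $\RR^{1+6}$.

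Next I would set up the contraction. Write $\Phi(v)=S_L(t)(\tu_0,\tu_1)+\int_0^t\frac{\sin((t-s)\sqrt{-\Delta})}{\sqrt{-\Delta}}\big(F(v(s))\indic_{\Ce_{0,R_0}}\big)\,ds$ and look for a fixed point in the ball
\[
B=\Big\{v:\ \|v\|_{\sfS}+\|v\|_{\sfW}\le 2C_0\eta+2C_0 A'\Big\},
\]
where $C_0$ is the Strichartz constant of \eqref{CK1} and the hypothesis gives $\|S_L(t)(\tu_0,\tu_1)\|_{\sfS(\Ce_{0,R_0})}\le\eta$ (this is the relevant quantity since the $\sfS$-norm of the evolution only enters through the cone, after the truncation kills $F$ outside). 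For the nonlinear term I would split $F(v)\indic_{\Ce_{0,R_0}}=f_1+f_2$: for equation \eqref{NLW}, using the multiplier bound $\|\indic_{\Ce_{0,R_0}}F(v)\|_{\sfW'}\lesssim\|v\|_{\sfS(\Ce_{0,R_0})}\|v\|_{\sfW(\Ce_{0,R_0})}$ recalled from \cite[Remark 2.4]{DuKeMe21a}, this is purely an $\sfW'$-term; for equation \eqref{NLWabs} the low regularity of $|u|u$ forces the argument of \cite{BuCzLiPaZh13} and a genuine $f_1\in\sfW'$, $f_2\in L^1_tL^2_r$ splitting, but the same estimates go through because $|\,|v|v-|w|w\,|\lesssim(|v|+|w|)|v-w|$ and $\indic_{\Ce_{0,R_0}}$ is still the relevant pointwise multiplier. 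In either case one gets $\|\Phi(v)\|_{\sfS}+\|\Phi(v)\|_{\sfW}\le C_0\eta+C_0 A' + C_0\big(\|v\|_{\sfS}+\|v\|_{\sfW}\big)^2$ and a matching Lipschitz bound, so taking $\eta=\eta(A)$ small (and absorbing $A'\lesssim A$ into the smallness by the usual $\eta^\theta A^{1-\theta}$ interpolation, exactly as in Proposition \ref{P:LWP}) makes $\Phi$ a contraction on $B$. Unconditional uniqueness in the radial class is quoted from \cite{DuKeMe21a}.

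The difference estimate $\|\vec u(t)-\vec S_L(t)(u_0,u_1)\|_{\dot H^1_{R_0+|t|}\times L^2_{R_0+|t|}}\le C\eta^\theta A^{1-\theta}$ then follows by applying the energy part of \eqref{CK1} to $u-S_L(\cdot)(\tu_0,\tu_1)$ and restricting to $\Ce_{0,R_0}$, where the Duhamel term is the only contribution and is controlled by $\|f_1\|_{\sfW'}+\|f_2\|_{L^1_tL^2_r}\lesssim(\eta+A')^2$, interpolated against the trivial bound $\lesssim A$ to produce the $\eta^\theta A^{1-\theta}$ gain. Finally, \eqref{CK+local} is just \eqref{CK+} transplanted to the cone: the same fixed-point output bounds $\|u\|_{\sfS(\Ce_{t_0,R_0})}+\|u\|_{\sfW(\Ce_{t_0,R_0})}+\|u\|_{L^2_tL^4(\Ce_{t_0,R_0})}$, and $\|\vec u(t)\|_{E_{t_0,R_0}}=\sup_{\bar t}\|\vec u(\bar t)\|_{\dot H^1_{R_0+|t-\bar t|}\times L^2_{R_0+|t-\bar t|}}$ is controlled by the uniform-in-time energy bound in \eqref{CK1}; by finite speed of propagation everything is independent of the extension. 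The main obstacle I expect is bookkeeping the extension and the cone-truncated multipliers carefully enough that all norms genuinely only see data on $\{r>R_0\}$ and the solution only on $\Ce_{0,R_0}$ — i.e. making the "restriction does not depend on the extension" step rigorous in the Besov-Strichartz setting — rather than any new analytic input, which is entirely supplied by \cite{DuKeMe21a} and \cite{BuCzLiPaZh13}.
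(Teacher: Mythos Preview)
The paper does not give a proof of this lemma: it simply cites \cite[Proposition 2.6]{DuKeMe21a} for the existence and estimates, and Remark 2.7 there for unconditional uniqueness in the radial case. Your plan is precisely the standard argument underlying that cited result --- extend the data to $\RR^6$ via Remark \ref{R:ext}, run the contraction for the cone-truncated equation $\partial_t^2\tilde u-\Delta\tilde u=F(\tilde u)\indic_{\Ce_{0,R_0}}$ using the Strichartz/Besov machinery of \eqref{CK1} and the multiplier property of $\indic_{\Ce_{0,R_0}}$ on $\sfW'$, and invoke finite speed of propagation to make the restriction independent of the extension --- so your approach and the paper's (cited) approach coincide.

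Two small cleanups: for $u_1\in L^2_{R_0}$ the extension of Remark \ref{R:ext} already preserves $L^2$, so no ``antiderivative-type construction'' is needed; and in the contraction the correct bookkeeping is to take the ball small in $\sfS(\Ce_{0,R_0})$ (radius $\sim\eta$) while $\sfW$ is merely bounded by $\lesssim A$, so that the nonlinear bound $\|\indic_{\Ce_{0,R_0}}F(v)\|_{\sfW'}\lesssim\|v\|_{\sfS(\Ce_{0,R_0})}\|v\|_{\sfW(\Ce_{0,R_0})}\lesssim \eta A$ closes when $\eta\le\eta(A)$ --- this is where the $\eta^\theta A^{1-\theta}$ structure of Proposition \ref{P:LWP} comes from, rather than from an interpolation step after the fact.
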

See \cite[Proposition 2.6]{DuKeMe21a}, and also Remark 2.7 there for the unconditional uniqueness in the radial case.

We conclude this subsection with the definition of non-radiative solutions:
\begin{definition}
 \label{D:CK1.1}
 We say that a function $u$ defined on $\Ce_{t_0,R_0}$, such that $E_{t_0,R_0}$ is finite is $(t_0,R_0)$ \emph{non-radiative} if 
 $$\lim_{t\to\pm \infty} \left\|\vec{u}(t)\right\|_{\dot{H}^1_{R_0+|t-t_0|}\times L^2_{R_0+|t-t_0|}=0}.$$
 If in addition $\left\|\vec{u}(t_0)\right\|_{\dot{H}^1_{R_0}\times L^2_{R_0}}\leq \ep_0,$ $\ep_0$ as in \eqref{CK+} below for exterior cones, we will say that $u$ is a small non-radiative solution. $(t_0,R_0)$ will be explicit from the context.
\end{definition}

\subsection{Channels of energy close to a multisoliton in space dimension $6$}
\label{S:channels_lin}
In this subsection, we recall lower bounds of the exterior energy, for solutions of the linearized equation around a soliton or a multisolitons, proved in \cite{CoDuKeMe22}. We start with a lower bound for odd solutions of the free wave equation that will be needed in the sequel:
 \begin{proposition}[Channels in 6d, with right-hand side]
 \label{P:CK1.4}
 Let $u\in \RR\times \RR^6$, solve 
\begin{equation*}
 \Box u=f,\quad u_{\restriction t=0}=0,\quad \partial_tu_{\restriction t=0}=u_1,
\end{equation*} 
$f,u_1$ radial. Fix $R>0$ and write $u_1=\frac{c_1}{r^4}+u_1^{\bot}$, where $\int_{R}^{\infty} u_1^{\bot}\frac{1}{r^4}r^5dr=0$. Then,
\begin{equation}
 \label{CK1.1}
 \left\|u_1^{\bot}\right\|_{L^2_R}\lesssim \lim_{t\to\infty} \left\|\vec{u}(t)\right\|_{\dot{H}^1_{R+|t|}\times L^2_{R+|t|}}+\|f\|_{L^1_tL^2_r\left( \Ce_{0,R} \right)}.
\end{equation}
 \end{proposition}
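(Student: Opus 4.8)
The plan is to peel off the forcing term $f$ first, and then establish the homogeneous estimate by conjugating the radial $6$-dimensional wave equation to a $1{+}1$-dimensional wave equation with an inverse-square potential, for which the radiation field can be analysed explicitly through the Hankel transform of order $2$. For \emph{Step 1 (reduction to $f=0$)}, observe that the three quantities in \eqref{CK1.1} depend on $u_1$ and $f$ only through their restrictions to $\{r>R\}$ and to $\Ce_{0,R}$, so by finite speed of propagation I may replace $f$ by $\indic_{\Ce_{0,R}}f$ without affecting anything. Then split $u=u^{L}+u^{D}$, where $\Box u^{L}=0$ with data $(0,u_1)$ at $t=0$, and $u^{D}(t)=\int_0^t\frac{\sin((t-s)\sqrt{-\Delta})}{\sqrt{-\Delta}}\big(\indic_{\Ce_{0,R}}f(s)\big)\,ds$. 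The energy inequality gives $\sup_{t}\|\vec u^{D}(t)\|_{\dot H^1\times L^2}\lesssim\|f\|_{L^1_tL^2_r(\Ce_{0,R})}$, hence a fortiori $\sup_{t}\|\vec u^{D}(t)\|_{\dot H^1_{R+|t|}\times L^2_{R+|t|}}\lesssim\|f\|_{L^1_tL^2_r(\Ce_{0,R})}$; by the triangle inequality it then suffices to prove \eqref{CK1.1} in the case $f=0$, $u=u^{L}$. For a free wave the exterior energy outside the forward cone $\{r>R+t\}$ is nonincreasing in $t\ge 0$, so the relevant limit exists.

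For \emph{Step 2 (the conjugated one-dimensional problem)}: for radial functions $-\Delta=-\partial_r^2-\tfrac5r\partial_r$, and the substitution $w:=r^{5/2}u^{L}$ turns the equation into $\partial_t^2 w+\Lc w=0$ on $(0,\infty)$, with data $(0,w_1)$, $w_1:=r^{5/2}u_1$, where $\Lc:=-\partial_r^2+\tfrac{15/4}{r^2}=-\partial_r^2+\tfrac{\nu^2-1/4}{r^2}$ with $\nu=\tfrac{N-2}{2}=2$. One diagonalizes $\Lc$ by the Hankel transform of order $\nu=2$, whose generalized eigenfunctions are $\sqrt{r\rho}\,J_2(r\rho)$, so that $w(t,r)=\int_0^\infty\frac{\sin(t\rho)}{\rho}\,\widehat{w_1}(\rho)\,\sqrt{r\rho}\,J_2(r\rho)\,d\rho$. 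Feeding in the asymptotics $J_2(z)=\sqrt{2/(\pi z)}\cos(z-\tfrac{5\pi}{4})+O(z^{-3/2})$ and letting $t\to+\infty$ with $\eta=r-t$ fixed, one extracts a radiation profile $G$ with $u^{L}(t,r)=r^{-5/2}(G(r-t)+o(1))$ up to a non-radiative correction of size $O(t\,r^{-4})$, and then computes
\[
\lim_{t\to+\infty}\|\vec u^{L}(t)\|_{\dot H^1_{R+|t|}\times L^2_{R+|t|}}^{2}=c\,\|G'\|_{L^2(R,\infty)}^{2}.
\]
(Equivalently, one can reach the same representation from the explicit formula for radial solutions in even dimensions obtained by descent from dimension $7$.) This bookkeeping, in the homogeneous case, is essentially carried out in \cite{DuKeMaMe21P} and \cite{LiShenWei21P}.

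It then remains, in \emph{Step 3 (the lower bound modulo the resonance)}, to show that the map $u_1\mapsto G'|_{(R,\infty)}$, on functions supported in $\{r>R\}$, is bounded below on the $L^2_R$-orthogonal complement of $r^{-4}$, i.e. $\|G'\|_{L^2(R,\infty)}\gtrsim\|u_1^{\bot}\|_{L^2_R}$; \textbf{this is the step I expect to be the main obstacle}. The two zero modes of $\Lc$ are $r^{5/2}$ (regular at the origin, but growing) and $r^{-3/2}$ (singular); under the conjugation the latter corresponds to $u=r^{-4}$, which is harmonic on $\RR^6\setminus\{0\}$, and indeed the free solution with data $(0,r^{-4})$ equals $t\,r^{-4}$ on $\{r>R+t\}$, whose exterior energy is $\tfrac{4t^2}{(R+t)^4}+\tfrac{1}{2(R+t)^2}\to 0$ — so $r^{-4}$ is exactly the direction that carries no asymptotic exterior energy, which is why it must be projected out. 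Because $\nu=2$ is an integer, the companion singular object fails to lie in the energy space only by a logarithm, which is why precisely one dimension is lost and why, in contrast with data of the form $(u_0,0)$, there is no logarithmic loss in \eqref{CK1.1}. Quantitatively, one analyses the Hankel multiplier near $\rho=0$ using $J_2(z)\sim z^2$ and $Y_2(z)\sim z^{-2}$ there, splits $w_1$ according to its $L^2(R,\infty)$-pairing with $r^{-3/2}$ (this pairing, which equals a multiple of $\langle u_1,r^{-4}\rangle_{L^2_R}$, governs the $t\,r^{-4}$ correction), and controls the complementary part of $\widehat{w_1}$ to conclude $\|G'\|_{L^2(R,\infty)}\gtrsim\|w_1^{\bot}\|_{L^2(R,\infty)}=\|u_1^{\bot}\|_{L^2_R}$. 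The delicate point is this low-frequency analysis — tracking the single obstruction exactly and handling the mismatch between the cone tip at $r=R$ and the origin — whereas the high-frequency part and the finite-speed-of-propagation bookkeeping are routine given the exterior Strichartz and multiplier estimates already recalled.
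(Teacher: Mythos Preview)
Your Step~1 is exactly the paper's argument: split $u=\tilde u+v$ with $\tilde u$ the free solution with data $(0,u_1)$ and $v$ the Duhamel piece with forcing $\indic_{\Ce_{0,R}}f$, bound $\sup_t\|\vec v(t)\|_{\dot H^1\times L^2}\lesssim\|f\|_{L^1_tL^2_r(\Ce_{0,R})}$, and reduce to the homogeneous case. For that homogeneous case the paper simply \emph{cites} \cite[Proposition~3.8]{DuKeMaMe21P} to get $\|u_1^{\bot}\|_{L^2_R}\le\tfrac{20}{3}\lim_{t\to\infty}\|\vec{\tilde u}(t)\|_{\dot H^1_{R+|t|}\times L^2_{R+|t|}}$, whereas you go further and sketch how that external result is actually proved (conjugation $w=r^{5/2}u$, Hankel transform of order $\nu=2$, identification of $r^{-4}$ as the unique non-radiative direction). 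Your outline is sound and is indeed the method of \cite{DuKeMaMe21P,LiShenWei21P}; the low-frequency/resonance analysis you flag as the ``main obstacle'' is precisely what those references carry out. So your proof is correct and, at the level of the reduction, identical to the paper's; you just unpack a black box that the paper leaves boxed.
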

\begin{proof}
 Let $\tu$ be the solution of the homogeneous wave equation with initial data $(u_0,u_1)$, and let $v$ be the solution of the inhomogeneous equation with right-hand side $f\indic_{\Ce_{0,R}}$, and $(0,0)$ initial data. Then $u=\tu+v$ on $\Ce_{0,R}$, by finite speed of propagation. By \cite[Proposition 3.8]{DuKeMaMe21P}, we have  
 \begin{multline*}
\left\|u_1^{\bot}\right\|_{L^2_R}=\left\|\tu_1^{\bot}\right\|_{L^2_R}\leq \frac{20}{3}\lim_{t\to\infty} \left\| \vec{u}-\vec{v}(t)\right\|_{\dot{H}^1_{R+|t|}\times L^2_{R+|t|}}\\
\leq \frac{20}{3} \left( \lim_{t\to\infty} \left\|\vec{u}(t)\right\|_{\dot{H}^1_{R+|t|}\times L^2_{R+|t|}}+\sup_{t}\left\|\vec{v}(t)\right\|_{\dot{H}^1_{R+|t|}\times L^2_{R+|t|}}\right)\\
\leq \frac{20}{3} \lim_{t\to\infty} \left\|\vec{u}(t)\right\|_{\dot{H}^1_{R+|t|}\times L^2_{R+|t|}}+C\|f\|_{L^1_tL^2_r\left( \Ce_{0,R} \right)}.
 \end{multline*}
\end{proof}

\subsubsection{Channels of energy around the ground state} \label{subsec:channelsunsoliton}
We consider the linearised equation, with $V=-2W$:
\begin{gather}
 \label{LW}
 \partial_t^2u_L-\Delta u_L+V u_L=0\\
 \label{IDLW}
 \vec{u}_{L\restriction t=0}=(u_0,u_1),
\end{gather} 
where $(u_0,u_1)\in \mathcal H=\dot H^1 \times L^2(\mathbb R^6)$. Note that this is the linearised equation for Equations \eqref{NLW} and \eqref{NLWabs} around $W$, as well as for Equation \eqref{NLWabs} around $-W$. It is easy to check that $u$ is globally well-posed in $\HHH$. Indeed, the local well-posedness can be proved by Strichartz estimates and the fact that $W$ is in $L^4$. The global well-posedness follows from the linearity of the equation.

We introduce the orthogonality direction:
$$
\Phi=2W\Lambda W =-\Delta \Lambda W
$$
We let
$$
\mathcal H_R=\dot H^1_R\times L^2_R
$$
We define for $\alpha \in \mathbb R$:
\begin{equation}
\label{defZ}
\| f\|_{Z_{\alpha}}=\sup_{R>0} \frac{R^{-3-\alpha}}{\langle \log R \rangle}\| f\|_{L^2(R\leq r\leq 2R)}, 
\end{equation} 
and note that this norm captures a $\langle \log r \rangle r^\alpha$-type behaviour, with in particular $\| \langle \log r \rangle r^{\alpha}\|_{Z_\alpha}$ finite, and $\| \langle \log r \rangle^{s} r^{\alpha}\|_{Z_\alpha}=\infty$ for $s>1$. Let $Z_\alpha$ be the space of radial $L^2_{\mathrm{loc}}$ functions in $\RR^6$ such that this norm is finite. Note that from Sobolev embedding, $\dot H^1 \subset Z_{-2}$ with:
$$
\| u \|_{Z_{-2}}\lesssim \| u \|_{\dot H^1}.
$$
Note also that $\| u \|_{Z_{-3}}\lesssim \| u\|_{L^2}$. Using Cauchy-Schwarz and the formula $f(r)=-\int_r^{\infty} \partial_rf(s)ds$, one can also prove the following variant of Hardy's inequality:
 $$ \|u\|_{Z_{-2}}\lesssim \|\nabla u\|_{Z_{-3}},$$
 for any $u\in \dot{H}^1$ radial.
 
If $H$ is a Hilbert space, and $E$ a closed linear subspace of $H$, we denote by $\Pi_H(E)$ the orthogonal projection onto $E$ in $H$. We then define the projections:
$$
\Pi_{\dot H^1}^\perp=\Pi_{\dot H^1} (\text{Span}(\Lambda W))^\perp, \qquad \Pi_{L^2}^\perp=\Pi_{L^2} (\text{Span}(\Lambda W))^\perp.
$$
\begin{proposition}[Channels of energy around the ground state] \label{pr:channelsunsoliton}
There exists $C>0$ such that any radial solution $u$ of \eqref{LW}, \eqref{IDLW} with $(u_0,u_1)\in \mathcal H$ satisfies:
\be \label{bd:channelunsoliton}
\big\| \Pi_{L^2}^\perp u_1\big\|_{L^2}^2+\big\|\nabla \Pi_{\dot H^1}^\perp u_0 \big\|_{Z_{-3}}^2\leq C \sum_{\pm } \lim_{t\rightarrow \pm \infty}  \int_{r\geq |t|} |\nabla_{t,x}u_L(t,x)|^2dx.
\ee
\end{proposition}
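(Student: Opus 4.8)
The plan is to reduce the channel-of-energy estimate for the \emph{perturbed} equation \eqref{LW} to the corresponding estimate for the \emph{free} wave equation, which is essentially the content of Proposition \ref{P:CK1.4} (together with its companion for data of the form $(u_0,0)$). The potential $V=-2W$ is smooth, radial and decays like $r^{-4}$ at infinity, so outside any fixed cone $\{r\geq |t|\}$ the term $Vu_L$ should be treated as a right-hand side that is integrable in time in $L^2_r$. Concretely, writing $u_L$ as the solution of $\Box u_L = -Vu_L =: f$ with data $(u_0,u_1)$, one wants to apply the free-wave lower bounds to $u_L$ with this forcing term, using that $\|f\|_{L^1_tL^2_r(\Ce_{0,0})}$ (or over $\Ce_{0,R}$) is controlled by a \emph{small} multiple of the energy of $(u_0,u_1)$ when restricted far out, so that it can be absorbed; the danger is that $V$ is not small, only its tail is, so one first localizes.

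First I would set up the decomposition of the data adapted to the two obstructions. Split $u_0$ and $u_1$ into their projections onto $\mathrm{Span}(\Lambda W)$ and the orthogonal complement in $\dot H^1$ and $L^2$ respectively; since $\Lambda W$ is exactly the zero mode of the linearized operator $-\Delta+V$ (it is $\Phi=2W\Lambda W=-\Delta\Lambda W$ up to the obvious identities, and $\Lambda W$ solves the linearized static equation), the solution with data $(\Pi_{\dot H^1}(\Lambda W\text{-part}),0)$ or the time-derivative direction is a (growing-in-time but explicit) resonant mode that one removes from the estimate — this is precisely why the projections $\Pi^\perp$ appear. So it suffices to prove \eqref{bd:channelunsoliton} for data orthogonal to $\Lambda W$. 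For such data, I would further split the free evolution's low-frequency/large-$r$ obstructions: the free radial wave equation in $6$D has the single bad direction $(r^{-4},0)$ for the $(0,u_1)$-part (captured by Proposition \ref{P:CK1.4}) and, by the even-dimensional parity considerations recalled in the introduction, the $(u_0,0)$-part carries the weaker $Z_{-3}$-type estimate with the logarithmic loss — this is why the left-hand side uses $\|\nabla\Pi^\perp_{\dot H^1}u_0\|_{Z_{-3}}$ rather than an $L^2$ norm, and why the $Z_\alpha$ norms with their $\langle\log R\rangle$ weight were introduced.

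Next I would run the perturbative argument: let $v$ solve $\Box v = Vu_L\,\indic_{\{r\geq |t|\}}$ with zero data and $\tilde u$ solve the free equation with data $(u_0,u_1)$, so $u_L=\tilde u+v$ on the cone by finite speed of propagation. Apply the free-wave lower bounds to $\tilde u$ to bound $\|\Pi^\perp_{L^2}u_1\|_{L^2}$ and $\|\nabla\Pi^\perp_{\dot H^1}u_0\|_{Z_{-3}}$ by $\lim_{t\to\pm\infty}\|\vec{\tilde u}(t)\|_{\dot H^1_{|t|}\times L^2_{|t|}}$ plus $\|Vu_L\|_{L^1_tL^2_r(\Ce_{0,0})}$; then bound the latter forcing norm, using the pointwise bound $|V(r)|\lesssim r^{-4}$ and a weighted Hardy/energy estimate, by (a small constant times) the right-hand side of \eqref{bd:channelunsoliton} plus, if necessary, a small multiple of $\sup_t\|\vec u_L(t)\|^2_{\mathcal H}$ which is itself controlled by the conserved-up-to-the-potential energy and ultimately by the exterior data — here one may need to iterate on dyadic shells $R\leq r\leq 2R$ to convert the $r^{-4}$ decay into smallness, exactly as in the $Z_\alpha$ bookkeeping. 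The main obstacle, I expect, is precisely this absorption step in the degenerate even-dimensional setting: because the relevant free-wave estimate for the $(u_0,0)$ part only holds with a logarithmic loss and in a $Z_{-3}$ (not $L^2$) norm, one must check that the potential term $Vu_L$, after being estimated in $L^1_tL^2_r$, still fits inside that weaker space without eating the logarithm — i.e. that the constant in \eqref{defZ} with its $\langle\log R\rangle$ denominator is robust under the perturbation. Propagating the $\langle\log r\rangle$-type weights correctly through the Duhamel term, and making sure the resonance $\Lambda W$ (and no spurious second direction) is the only thing that must be projected out, is where the real work lies; everything else is a combination of the free-wave channel estimates recalled above, the decay of $W$, and Hardy-type inequalities.
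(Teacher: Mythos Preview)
The paper does not prove this proposition: it simply cites \cite[Theorem 1.1]{CoDuKeMe22}. So there is no in-paper argument to compare against, and the real question is whether your perturbative strategy could actually supply a proof.

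It cannot, and the gap is the absorption step you yourself flag. The potential $V=-2W$ is not small: it is $O(1)$ on $\{r\lesssim 1\}$, and you are working over the full cone $\{r>|t|\}$, which includes this region for $|t|\lesssim 1$. Writing $u_L$ as a free solution with forcing $f=-Vu_L$ and invoking Proposition \ref{P:CK1.4} (and its even-in-time analogue) requires controlling $\|Vu_L\|_{L^1_tL^2_r(\{r>|t|\})}$ by a small multiple of the left-hand side of \eqref{bd:channelunsoliton} plus the exterior energy. But for $|t|\leq 1$ the inner contribution of $Vu_L$ is comparable to $\|u_L(t)\|_{L^2(r\sim 1)}$, which is of the same order as $\|(u_0,u_1)\|_{\HHH}$, not small compared to it; iterating over dyadic shells gains you smallness only for $r\gg 1$, not on the unit ball. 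Worse, the linearized operator $-\Delta+V$ has a genuine negative eigenvalue, with eigenfunction $Y\sim r^{-5/2}e^{-\omega r}$ at infinity; for the solution $u_L=e^{\omega t}Y$ one computes $\|VY\|_{L^2(r>t)}\sim e^{-\omega t}$ for large $t$, so $\|Vu_L\|_{L^1_tL^2_r(\{r>|t|\})}\gtrsim \int_0^{\infty}e^{\omega t}e^{-\omega t}\,dt=+\infty$. Thus the Duhamel remainder is not even in $L^1_tL^2_r$, and the scheme collapses before one ever reaches the delicate $Z_{-3}$ bookkeeping. (Note also a mismatch you pass over: the free-wave obstruction is $r^{-4}$, not $\Lambda W$; these agree to leading order at infinity but differ near the origin, and reconciling the two projections is itself a nontrivial step.)

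A proof of this estimate genuinely requires analysis of the linearized operator $-\Delta+V$ beyond perturbation theory: one must handle the unit-scale region where $V$ is large, separate out the unstable mode as well as the resonance $\Lambda W$, and establish the $Z_{-3}$ lower bound directly for the perturbed evolution. That is the content of \cite{CoDuKeMe22}, and it cannot be shortcut by treating $V$ as a source term for the free equation.
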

See \cite[Theorem 1.1]{CoDuKeMe22}.

\subsubsection{Channels of energy close to a multisoliton}
\label{subsec:channels_multi}

We introduce for $J\in \mathbb N$:
$$
\Lambda_J=\left\{\lambdabf=(\lambda_1,...,\lambda_J)\in (0,\infty)^J, \ \lambda_J< \lambda_{J-1}<...<\lambda_{1} \right\}
$$
and define for all $\lambdabf \in \Lambda_J$ the scale separation parameter:
$$
\gamma (\lambdabf)=\max_{1\leq j\leq J-1}\frac{\lambda_{j+1}}{\lambda_j}.
$$
We shall use the convention that for $\lambdabf \in \Lambda_J$, $
\lambda_{J+1}=0$ 
and $\lambda_0=\infty$.
We define for $\lambda>0$ the $\dot H^1$ and $L^2$ rescalings:
$$
f_{(\lambda)}=\frac{1}{\lambda^2}f\left(\frac{r}{\lambda}\right), \qquad f_{[\lambda]}=\frac{1}{\lambda^3}f\left(\frac{r}{\lambda}\right).
$$
Given $\lambdabf \in \Lambda_J$ we define the potential around a multisoliton:
$$
V_{\lambdabf}=  \sum_{j=1}^J V_{(\lambda_j)},
$$
where we recall $V=-2W$. We study in this subsection solutions to:
\begin{equation}
 \label{id:linearmultisoliton} 
 \left\{
 \begin{aligned}
 \partial_t^2u-\Delta u+V_{\lambdabf}u=0,\\
 \vec{u}_{\restriction t=0}=(u_0,u_1)\in \mathcal H,
 \end{aligned}\right.
\end{equation}
and assume throughout that $u$ is radially symmetric. We define, for $\lambdabf \in \Lambda_J$:
\begin{gather*}
\| f\|_{Z_{\alpha,\lambdabf}}=\sup_{R>0} \ \frac{R^{-3-\alpha}}{\inf_{1\leq j \leq J} \ \langle \log \frac{R}{\lambda_j} \rangle}\| f\|_{L^2(R\leq r\leq 2R)}.
\end{gather*}
We note that this norm captures a $ r^\alpha$-type behaviour with logarithmic loss away from the solitons, with in particular $\|r^{\alpha} \ \inf_i \langle \log r \lambda_i^{-1} \rangle \|_{Z_{\alpha,\lambdabf}}\approx 1$. Let $Z_{\alpha,\lambdabf}$ stand for the Banach spaces of radial functions associated with this norm. Note that from Sobolev embedding, $\dot H^1\subset Z_{-2,\lambdabf}$ with the following estimates that are uniform in $\lambdabf$:
$$
\| u \|_{Z_{-2,\lambdabf}}\lesssim \| u \|_{\dot H^1}.
$$
Recall the notation for the projectors $\Pi_{L^2}(E)$ and $\Pi_{\dot H^1}(E)$ of Subsection \ref{subsec:channelsunsoliton}. We define:
\begin{align*}
&\Pi_{\dot H^1,\lambdabf}= \Pi_{\dot H^1}  \left( \text{Span}((\Lambda W)_{(\lambda_j)})_{1\leq j \leq J} \right), \qquad \Pi_{L^2,\lambdabf}= \Pi_{L^2}  \left( \text{Span}((\Lambda W)_{[\lambda_j]})_{1\leq j \leq J} \right),\\
&\Pi_{\dot H^1,\lambdabf}^\perp=\operatorname{Id}-\Pi_{\dot H^1,\lambdabf}, \qquad \qquad \qquad \qquad \qquad \Pi_{L^2,\lambdabf}^\perp=\operatorname{Id}-\Pi_{L^2,\lambdabf}.
\end{align*}

\begin{proposition}[Channels of energy around a multisoliton] \label{pr:channels}

For any $J\in \mathbb N$, there exist $\gamma^*,C>0$ such that for any $\lambdabf \in \Lambda_J$ with $\gamma(\lambdabf)\leq \gamma^*$ if $u$ solves \eqref{id:linearmultisoliton} on $\mathbb R^{1+6}$ then:
\begin{multline} \label{bd:channelsmultisoliton}
\| \Pi_{L^2,\lambdabf}^\perp \, u_1 \|_{L^2}^2+\| \nabla \Pi_{\dot H^1,\lambdabf}^\perp \, u_0 \|_{Z_{-3,\lambdabf}}^2\\
\leq C\left( \sum_{\pm } \lim_{t\rightarrow \pm \infty}  \int_{r\geq |t|} |\nabla_{t,x}u|^2 \ +\gamma (\lambdabf)^2 \| (u_0,u_1)\|_{\mathcal H}^2\right).
\end{multline}
\end{proposition}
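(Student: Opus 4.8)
The plan is to deduce the multisoliton channel estimate \eqref{bd:channelsmultisoliton} from the single-soliton estimate \eqref{bd:channelunsoliton} (Proposition \ref{pr:channelsunsoliton}) by a localization/perturbation argument, exploiting the scale separation $\gamma(\lambdabf)\leq \gamma^*$. First I would set up a partition of the radial variable into the annular regions $I_j=\{r\sim \lambda_j\}$ (say $\sqrt{\lambda_j\lambda_{j+1}}<r<\sqrt{\lambda_{j-1}\lambda_j}$) separated by the ``transition'' scales $\sqrt{\lambda_{j}\lambda_{j+1}}$, together with an inner region $r\lesssim \lambda_J$ and an outer region $r\gtrsim \lambda_1$. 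On each such region the potential $V_{\lambdabf}$ is, after rescaling by $\lambda_j$, a small perturbation of the single potential $V=-2W$: the contributions of the other bubbles $V_{(\lambda_k)}$ with $k\neq j$ are $O(\gamma(\lambdabf))$ in the relevant region since $W$ decays like $r^{-4}$ at infinity and like $1$ near zero. I would then run the single-soliton channel estimate of Proposition \ref{pr:channelsunsoliton} at each scale $\lambda_j$, applied to a cut-off of the solution, picking up error terms controlled by $\gamma(\lambdabf)\|(u_0,u_1)\|_{\mathcal H}$ from the commutators with the cutoffs and from the off-scale pieces of the potential; these errors are exactly the last term on the right of \eqref{bd:channelsmultisoliton}.

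The key steps, in order, would be: (1) fix $\gamma^*$ small and, for each $j$, write $u=\chi_j u+(1-\chi_j)u$ where $\chi_j$ is a smooth radial cutoff to a dyadic neighborhood of $\lambda_j$ (adapted so the cutoff regions of consecutive $j$ overlap only on the transition annuli); (2) observe that $\chi_j u$ solves \eqref{LW} with potential $V_{(\lambda_j)}$ up to a right-hand side consisting of (a) the commutator $[\partial_t^2-\Delta,\chi_j]u$, supported on the transition annuli where it is controlled in $L^1_tL^2$ by $\gamma(\lambdabf)\|(u_0,u_1)\|_{\mathcal H}$ using the exterior energy flux, and (b) the term $\chi_j(V_{\lambdabf}-V_{(\lambda_j)})u$, which is $O(\gamma(\lambdabf))\|(u_0,u_1)\|_{\mathcal H}$ pointwise-in-time by the decay/boundedness of $W$; (3) apply Proposition \ref{pr:channelsunsoliton} (rescaled to $\lambda_j$) together with Proposition \ref{P:CK1.4} (or its variant with right-hand side) to bound $\|\Pi^\perp_{L^2}(\chi_ju)_1\|_{L^2(r\sim\lambda_j)}^2+\|\nabla\Pi^\perp_{\dot H^1}(\chi_ju)_0\|_{Z_{-3}}^2$ by the outer energy plus $\gamma(\lambdabf)^2\|(u_0,u_1)\|_{\mathcal H}^2$; (4) sum over $j$ and reassemble, checking that $\sum_j \|\Pi^\perp_{\dot H^1,(\lambda_j)}(\chi_j u_0)\|$-type quantities control the global $\|\nabla\Pi^\perp_{\dot H^1,\lambdabf}u_0\|_{Z_{-3,\lambdabf}}$ up to $\gamma$-errors, using that the single-soliton unstable/neutral directions $(\Lambda W)_{(\lambda_j)}$ are almost-orthogonal in $\dot H^1$ and in $L^2$ when $\gamma(\lambdabf)\ll 1$, so that the global projection $\Pi^\perp_{\dot H^1,\lambdabf}$ differs from the product of the local ones by $O(\gamma(\lambdabf))$; and (5) handle the inner region $r\lesssim\lambda_J$ and outer region $r\gtrsim\lambda_1$ directly (in the inner region one may even just use $\|u\|_{Z_{-2}}\lesssim\|\nabla u\|_{Z_{-3}}$ and finite speed of propagation, in the outer region the potential is negligible and Proposition \ref{P:CK1.4} applies).

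The main obstacle I expect is Step (4): correctly matching the \emph{global} weighted norm $\|\nabla\Pi^\perp_{\dot H^1,\lambdabf}u_0\|_{Z_{-3,\lambdabf}}$, with its $\inf_j\langle\log(R/\lambda_j)\rangle$ denominator, to the sum of the \emph{local} $Z_{-3}$ norms at each scale. The logarithmic weights in $Z_{-3,\lambdabf}$ are scale-dependent and only ``see'' the nearest soliton, so one must verify that on the transition annulus between scales $\lambda_j$ and $\lambda_{j+1}$ the two local estimates patch together without log-losses accumulating across the $\log\gamma(\lambdabf)$-many scales; this is where the precise form of the $Z$-norms from \cite{CoDuKeMe22} and the sharpness of Proposition \ref{pr:channelsunsoliton} (no extra log) are essential. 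A secondary difficulty is that the cutoff $\chi_j$ does not commute with the orthogonal projections $\Pi^\perp$, so one must argue that projecting out $(\Lambda W)_{(\lambda_j)}$ before or after cutting off differs by a term controlled by $\gamma(\lambdabf)$, again using the localization of $\Lambda W$ and its derivative in the annulus $r\sim\lambda_j$. Given these, the rest is bookkeeping with the Strichartz and exterior-energy estimates already recalled in the Preliminaries.
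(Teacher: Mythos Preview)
The paper does not actually prove this proposition: immediately after the statement it simply writes ``See \cite[Theorem 1.4]{CoDuKeMe22}'', so the result is imported as a black box from the companion paper. There is therefore no proof here to compare your proposal against, and your outline should be read as an attempted independent proof rather than a reconstruction.

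That said, your localization strategy has a real gap at Step~(2)(a). You claim the commutator $[\partial_t^2-\Delta,\chi_j]u$, supported on the transition annulus $\{r\sim R_j:=\sqrt{\lambda_j\lambda_{j+1}}\}$, is bounded in $L^1_tL^2_x(\{|x|>|t|\})$ by $\gamma(\lambdabf)\|(u_0,u_1)\|_{\mathcal H}$. But for a dyadic cutoff one has $|\nabla\chi_j|\sim R_j^{-1}$, and the support condition $|x|\sim R_j$, $|x|>|t|$ restricts the time integration to $|t|\lesssim R_j$. Hence
\[
\big\|\,[\partial_t^2-\Delta,\chi_j]u\,\big\|_{L^1_tL^2_x(\{|x|>|t|\})}
\;\lesssim\; R_j\cdot R_j^{-1}\,\sup_{|t|\lesssim R_j}\|\nabla_{t,x}u(t)\|_{L^2(\{|x|\sim R_j\})}
\;\approx\;\|\nabla_{t,x}u\|_{L^\infty_tL^2(\{|x|\sim R_j\})},
\]
which is of size $\|(u_0,u_1)\|_{\mathcal H}$, not $\gamma\|(u_0,u_1)\|_{\mathcal H}$: the data $(u_0,u_1)$ are arbitrary in $\mathcal H$ and need not have small energy in the transition annuli. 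This error then swamps the right-hand side of \eqref{bd:channelsmultisoliton}, so the argument as written does not close. A spatial cutoff approach would require either a mechanism forcing the solution's energy in the transition annuli to be $O(\gamma)$ (which is precisely part of what one is trying to prove), or a different treatment of the commutator that does not lose the factor of $\gamma$.

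The proof in \cite{CoDuKeMe22} does not proceed by such a cutoff-and-sum; it builds directly on the representation and estimates developed there for the linearized flow, and in particular handles the interaction between scales at the level of the channel-of-energy machinery itself rather than by patching local single-soliton estimates. If you want to salvage a perturbative argument, you would need a version of Proposition~\ref{pr:channelsunsoliton} with a forcing term whose contribution is measured in a norm that genuinely gains $\gamma$ on the off-scale potential pieces \emph{without} introducing spatial cutoffs of the solution.
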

See \cite[Theorem 1.4]{CoDuKeMe22}.

\subsection{Non-radiative solutions}
This section is devoted to non-radiative solutions, as defined in Definition \ref{D:CK1.1}. We will first give general properties of these solutions on $\Ce_{t,R}$, $R$ large. These properties are direct consequences of \cite{CoDuKeMe22Pb}. In Subsection \ref{Sub:close_multisoliton}, we will study non-radiative solutions close to a multisoliton, using the exterior energy estimates for the linearized equation of Section \ref{S:channels_lin}.

\subsubsection{Far away properties of non-radiative solutions}
\label{Sub:nonradiative}

In this subsection, we give properties on non-radiative solutions of equations \eqref{NLW} and \eqref{NLWabs} on $\Ce_{t_0,R_0}$ with small energy. Note that for a fixed non-radiative solution, the small energy assumption is always satisfied provided $R_0$ is chosen large enough.
We start by stating the existence of a negative stationary solution of \eqref{NLW} defined for large $r$. We denote by $c_W=24^2$, so that 
$$ \lim_{r\to\infty} r^4 W(r)=c_W.$$
\begin{lemma}
 \label{L:EllExistence}
 There exists $R_->0$ and $W^-\in C^{\infty}(\RR^6\cap\{|x|>R_-\})$, radial, such that
 \begin{gather}
  \label{Ell11}
  -\Delta W^-=\left( W^- \right)^2,\quad r>R_-\\
  \label{Ell12}
  \lim_{r\to R_-} W^-(r)=-\infty.
 \end{gather} 
 and $W^-$ satisfies, for large $r$,
 \begin{equation}
  \label{Ell10}
  \left|W^{-}(r)+\frac{c_W}{r^4}\right|\lesssim \frac{1}{r^6},\quad \left|\frac{d W^-}{dr} -\frac{4c_W}{r^5}\right|\lesssim \frac{1}{r^7}.
 \end{equation} 
\end{lemma}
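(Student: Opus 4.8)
The plan is to build $W^-$ in two stages: first a fixed-point construction near $r=\infty$ producing a solution with the precise asymptotics \eqref{Ell10}, then an ODE continuation inward, the real work being to show this continuation must blow down to $-\infty$ at a strictly positive radius.

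First I would record that radial solutions of $-\Delta u=u^2$ on $\RR^6$ solve $(r^5u')'=-r^5u^2$, and look for one with $u(r)\to 0$ and leading profile $-c_W/r^4$ (the Newtonian profile in $\RR^6$; this particular coefficient may be imposed directly, or obtained from any negative coefficient by the scaling invariance). Integrating the ODE twice from $+\infty$ recasts the problem as the fixed-point equation
\[
u(r)=-\frac{c_W}{r^4}-\mathcal T(u)(r),\qquad \mathcal T(u)(r):=\int_r^{\infty}\rho^{-5}\int_\rho^{\infty}s^5u(s)^2\,ds\,d\rho .
\]
On the ball $\{u\in C((R,\infty)):\ \sup_{r>R}r^4|u(r)|\le 2c_W\}$ one checks $|\mathcal T(u)(r)|\lesssim r^{-6}$ and that the map is a contraction once $R$ is large, giving a unique fixed point $W^-$ there. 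Differentiating twice shows $W^-$ solves \eqref{Ell11}, while the fixed-point identity and the once-integrated relation $r^5(W^-)'(r)=4c_W+\int_r^\infty s^5(W^-)^2\,ds$ yield $W^-(r)\to0$, $|W^-(r)+c_W r^{-4}|\lesssim r^{-6}$ and $|(W^-)'(r)-4c_W r^{-5}|\lesssim r^{-7}$, i.e. \eqref{Ell10}; moreover $\mathcal T(W^-)\ge 0$ forces $W^-\le -c_W r^{-4}<0$.

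Next I would extend $W^-$ to its maximal interval $(R_-,\infty)$, $R_-\ge 0$, by ODE theory; it is automatically $C^\infty$. Writing $h(r)=r^5(W^-)'(r)$, the identity $h'=-r^5(W^-)^2\le 0$ together with $h(+\infty)=4c_W$ gives $h>4c_W>0$ on $(R_-,\infty)$, hence $(W^-)'>0$ and $W^-$ strictly increasing; since $W^-(+\infty)=0$ this keeps $W^-<0$ and makes it decrease monotonically as $r\downarrow R_-$. It then remains to prove $R_->0$ and $W^-(r)\to-\infty$ there. Put $v=-W^->0$, so $v''+\tfrac5r v'=v^2$, $v'=-h/r^5<0$, $|v'(r)|>4c_W r^{-5}$; thus $v$ increases as $r$ decreases and, integrating, $v(r)\ge \tfrac{c_W}{2}r^{-4}$ for small $r$. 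Suppose $R_-=0$. Multiplying $v''=v^2-\tfrac5r v'$ by $v'$ and integrating on $[r,r_0]$ (the cross term $\int_r^{r_0}(v')^2/s\,ds$ has the favorable sign) gives $\tfrac12(v')^2\ge \tfrac13 v^3+C_0$ for a fixed constant $C_0$; since $v(r)\to\infty$ as $r\to0^+$ this upgrades to $-v'\ge \tfrac1{\sqrt2}v^{3/2}$, i.e. $\tfrac{d}{dr}(v^{-1/2})\ge \tfrac1{2\sqrt2}$, for $r$ small. Integrating on $[r,r_2]$ and sending $r\to0^+$ (so $v^{-1/2}\to0$) yields $v(r_2)\le 8r_2^{-2}$ for all small $r_2$, contradicting $v(r_2)\ge \tfrac{c_W}{2}r_2^{-4}$ once $r_2<\tfrac14\sqrt{c_W}$. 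Hence $R_->0$; and since $R_-$ is then a regular point of the ODE, the solution can fail to continue to the left of $R_-$ only if $v$ is unbounded near $R_-$, so by monotonicity $v(r)\to+\infty$, i.e. $W^-(r)\to-\infty$ as $r\to R_-^+$, which is \eqref{Ell12}.

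The routine parts are the contraction argument and the inward continuation; the real obstacle is ruling out $R_-=0$. A naive Riccati/energy blow-up argument only bounds the length of the blow-up interval by a quantity of size $\sim v^{-1/2}$, which by itself does not keep $R_-$ away from $0$. The resolution is to combine it with the \emph{a priori} lower bound $v(r)\gtrsim c_W r^{-4}$ — itself coming from the monotonicity of $h=r^5(W^-)'$ and its limit $4c_W$ at infinity — the two being incompatible as $r\to 0$.
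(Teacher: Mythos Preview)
Your proof is correct. The construction near infinity and the derivation of \eqref{Ell10} match the paper exactly (same integral fixed-point equation, same ball $\{r^4|f|\le 2c_W\}$). The difference is in how you rule out $R_-=0$.

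The paper performs the change of variables $s=1/r^4$, $Z(s)=W^-(r)$, obtaining $Z''+\tfrac{1}{16}s^{-3/2}Z^2=0$ with $Z'(0^+)=-c_W$; it then shows by induction that $|Z(s)|\gtrsim s^n$ for every $n$, so eventually $|Z''|\ge |Z|^{3/2}$, and a standard finite-time blow-up argument (for $s\to\infty$) gives the contradiction. Your route stays in the $r$-variable: from the monotonicity of $h=r^5(W^-)'$ and $h(\infty)=4c_W$ you extract the pointwise lower bound $v\gtrsim c_W r^{-4}$, and you pair this with the energy identity $\tfrac12(v')^2\ge \tfrac13 v^3+C_0$ (the first-order friction term indeed enters with the favorable sign after integrating on $[r,r_0]$) to get $v\le 8r^{-2}$ near $0$, contradicting the $r^{-4}$ bound. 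This is more direct --- no change of variables, no inductive polynomial growth, just one integration --- and it makes transparent \emph{why} the argument works: the scaling-critical profile $r^{-4}$ is strictly steeper than the $r^{-2}$ ceiling forced by the energy blow-up rate. The paper's version, on the other hand, packages the problem as a textbook $s\to\infty$ blow-up, which may be more recognizable. Both are short and either would be fine here. One minor point: your final clause ``the solution can fail to continue only if $v$ is unbounded'' implicitly uses that bounded $v$ near $R_->0$ forces bounded $v'$ (since $h'=-r^5v^2$ is then integrable, so $h$ and hence $v'=-h/r^5$ stay bounded); this is routine but worth a half-line.
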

\begin{remark}
 As a consequence of \eqref{Ell12}, \eqref{Ell10}, we have $W^-\in \dot{H}^1_R$ for all $R>R_-$ and $W^-\notin \dot{H}^1_{R_-}$. 
\end{remark}

\begin{proof}[Sketch of proof of Lemma \ref{L:EllExistence}]
(See \cite{DuKeMe12c}, \cite{DuyckaertsRoy17}).

One can prove the existence of $W^-$ using the following Duhamel form of the equation \eqref{Ell11},
\begin{equation}
\label{FPW-} 
W^-(r)=-\frac{c_W}{r^4}-\int_{r}^{\infty}\frac{1}{\rho^5} \int_{\rho}^{\infty} \left( W^-(s) \right)^2s^5ds
\end{equation} 
by fixed point 
in the metric space 
$$\left\{ f\in C^0([R,\infty)), \quad N_R(f):=\max_{r\geq R} r^4|f(r)|\leq 2c_W\right\},$$
where $R$ is large, 
with the metric induced by the norm $N_R$. 
The fact that $W^-$ is $C^{\infty}$, and the estimate \eqref{Ell10} follow easily from \eqref{FPW-}.

We denote by $(R_-,\infty)\subset (0,\infty)$ the maximal interval of existence of $W^-$, as a solution of the ordinary differential equation $y''+\frac 5r y'+y^2=0$. To prove that $R_->0$, we argue by contradiction, assuming that $R_-=0$. Let $s=1/r^4$, and define $Z$ by $W_-(r)=Z(s)=Z(1/r^4)$, so that $Z$ is defined on $(0,\infty)$ and
\begin{equation}
 \label{eqZ}
 Z''+\frac{1}{16 s^{3/2}}Z^2=0,\quad \lim_{s\to 0} \frac{1}{s}Z(s)=\lim_{s\to 0} Z'(s)=-c_W.
\end{equation} 
By \eqref{eqZ}, $Z'(s)\leq -c_W$, $Z(s)\leq -c_Ws$ for all $s$. By a straightforward induction and \eqref{eqZ}, one also proves that $|Z(s)|\gtrsim s^n$ for all $n$ and $s$. In particular, there exists a constant $C>0$ such that for $s$ large, $|Z''(s)|\geq |Z|^{3/2}$. Together with the facts that $Z,Z'$ and $Z''$ are negative, one can deduce blow-up in finite time by standard arguments, yielding a contradiction. Since $Z(s)$ is negative, \eqref{Ell12} follows by a standard blow-up criterion for differential equations. The proof of the lemma is complete.
\end{proof}

In the case of equation \eqref{NLWabs}, we denote $W^-=-W$, which also satisfies the estimate \eqref{Ell10}.

We write $W^+=W$. The main result of this section is the following for non-radiative solutions as defined in Definition \ref{D:CK1.1}:
\begin{proposition}
 \label{P:nonradiative}
 There exist constants $C>0$, $\eps_0$ with the following properties. Let $u$ be a solution of \eqref{NLWabs} or \eqref{NLW} defined on an interval $I$, which is a $(t_0,0)$ non-radiative solution for all $t_0\in I$. 
Then one of the following holds:
 \begin{enumerate}
  \item \label{caseW} The solution $u$ is stationary. In other words, $u\equiv 0$ or there exists $\lambda>0$ and a sign $\iota\in \{\pm 1\}$ so that $u(t)=W^{\iota}_{(\lambda)}$ for $t\in I$.
  \item \label{caselr4} There exists $\ell \in \RR$, $\ell\neq 0$ such that if $t_0\in I$ and $R_0>0$ is such that \begin{equation}
\label{smallness}
 \|\vec{u}(t_0)\|_{\dot{H}^1_{R_{0}}\times L^2_{R_{0}}}=\varepsilon\leq \varepsilon_0,
\end{equation} 
   we have, for all $R\geq R_0$,
  \begin{equation} \label{nonradiative:quantitativebound}
\| \partial_t u(t_0)-\ell/r^4\|_{L^2_{R}}\leq C \eps^2\left(\frac{R_0}{R} \right)^{3/2}.
\end{equation}
\end{enumerate}
The constants $C$ and $\eps_0$ are independent of $u$.
\end{proposition}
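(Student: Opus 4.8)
The plan is to reduce the nonlinear classification to the results of \cite{CoDuKeMe22Pb} plus the channel estimates recalled above, as follows. First I would invoke the classification of non-radiative solutions outside wave cones from \cite{CoDuKeMe22Pb}: for a solution $u$ that is $(t_0,0)$ non-radiative for every $t_0\in I$, after restricting to an exterior cone $\Ce_{t_0,R_0}$ with $R_0$ large (so that the energy there is $\leq\eps_0$), $u$ is a small non-radiative solution in the sense of Definition \ref{D:CK1.1}, and the classification says $\vec u(t_0)$ on $\{r>R_0\}$ lies in a two-parameter family whose leading-order asymptotics are governed by the two "bad'' directions $(r^{-4},0)$ and $(0,r^{-4})$. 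The static solutions $W^\pm_{(\lambda)}$ and (on an exterior region) $W^-$ realize the first direction; the second direction $(0,r^{-4})$ is the resonant one that barely misses $L^2$. So the dichotomy should come from asking whether the $(0,r^{-4})$-component of the data is zero or not.

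Next, the key structural input is Proposition \ref{P:CK1.4}: for the actual nonlinear solution, writing $\partial_tu(t_0)=c_1/r^4+u_1^\perp$ with $u_1^\perp\perp r^{-4}$ on $\{r>R\}$, and using that $u$ is non-radiative (so the limit term vanishes), one gets $\|u_1^\perp\|_{L^2_R}\lesssim \|f\|_{L^1_tL^2_r(\Ce_{0,R})}$ where $f=F(u)=|u|u$ or $u^2$ is the nonlinearity evaluated along $u$. The point is that on the far exterior cone $u$ is small and decays, so $f$ is quadratically small and one can bound $\|f\|_{L^1_tL^2_r(\Ce_{t_0,R})}\lesssim \eps^2 (R_0/R)^{3/2}$ using the pointwise/$L^2$ decay estimates for small non-radiative solutions coming from \cite{CoDuKeMe22Pb} (the solution behaves like $O(\eps (R_0/r)^{?})$ with enough decay in $r$ and enough integrability in $t$ on the cone). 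This is exactly the quantitative bound \eqref{nonradiative:quantitativebound} with $\ell=c_1$ — so for any non-radiative solution, once we know $\partial_tu(t_0)$ has a $c_1/r^4$ leading term, estimate \eqref{nonradiative:quantitativebound} follows essentially for free from Proposition \ref{P:CK1.4} and the small-data decay.

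It then remains to handle the case $\ell=0$, i.e. to show that if the $r^{-4}$-coefficient of $\partial_tu(t_0)$ vanishes (for one, hence by the equation and time-translation for all, $t_0$), then $u$ must be stationary. Here I would use the classification from \cite{CoDuKeMe22Pb}: with the resonant $(0,r^{-4})$-direction switched off, the remaining one-parameter family of non-radiative solutions on the exterior cone is precisely $\{(W^\iota_{(\lambda)},0)\}$ (and $W^-$), whose $\partial_t$-part is zero. Thus $\partial_tu\equiv 0$ on $\{r>R_0+|t-t_0|\}$ for all $t_0$, hence $\partial_tu\equiv 0$ on $\{r>|t|+R_0\}$ and then, letting $R_0$ decrease along the family of non-radiative times (or by unique continuation / the exterior uniqueness in Lemma \ref{L:CK1.5}), $\partial_tu\equiv 0$ globally; so $u(t)=u_0$ is a static solution of $-\Delta u_0=F(u_0)$ that lies in $\dot H^1$, and by the classification of radial $\dot H^1$ solutions of $-\Delta u_0=|u_0|u_0$ in $\RR^6$ (the only ones are $0$ and $\pm W_{(\lambda)}$; for \eqref{NLW} only $0$ and $W_{(\lambda)}$, since $W^-\notin\dot H^1$) we land in case \eqref{caseW}. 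One subtlety: one must check that $\ell$ does not depend on $t_0$ and is the same for all choices of $R_0$ — this follows because $r^{-4}$ is a static profile for the exterior linear/nonlinear dynamics and the decomposition is compatible under enlarging $R$, and because the equation propagates the asymptotic coefficient forward in time (the correction is quadratically small).

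The main obstacle I expect is the decay estimate feeding Proposition \ref{P:CK1.4}: one needs that a small non-radiative solution on $\Ce_{t_0,R_0}$ obeys good enough pointwise-in-$r$ and integrable-in-$t$ bounds so that $\||u|u\|_{L^1_tL^2_r(\Ce_{t_0,R})}\lesssim \eps^2(R_0/R)^{3/2}$ — i.e. the nonlinearity must be not merely small but decay in $R$ with the precise exponent $3/2$. This is the delicate low-regularity point mentioned in the introduction (the nonlinearity $|u|u$ is only $C^1$), and it is where the fine asymptotic description of non-radiative solutions from \cite{CoDuKeMe22Pb} — controlling $u$ and its radiation field on the exterior cone with the correct $r$-weights and the logarithmic $Z_\alpha$-type losses — does the real work; packaging that into the clean power $\eps^2(R_0/R)^{3/2}$, uniformly in $u$, is the heart of the argument.
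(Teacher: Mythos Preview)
Your plan has the right ingredients but contains a genuine gap in the key step. Proposition \ref{P:CK1.4} applies only to solutions with initial data of the form $(0,u_1)$ --- the hypothesis $u_{\restriction t=0}=0$ is essential, because in dimension $6$ the exterior channel estimate \eqref{lower_bound} \emph{fails} for data of the form $(u_0,0)$. A non-radiative solution $u$ has $u(t_0)\neq 0$ in general (it may be close to a sum of solitons), so you cannot feed $u$ itself into Proposition \ref{P:CK1.4} and conclude $\|u_1^\perp\|_{L^2_R}\lesssim \|F(u)\|_{L^1_tL^2_r}$. The paper's remedy is the odd/even symmetrisation from \cite{DuKeMaMe21P}: one applies Proposition \ref{P:CK1.4} to the odd part $u_-(t)=\tfrac12(u(t)-u(-t))$, which has data $(0,\partial_tu(0))$ and right-hand side $\tfrac12(F(u(t))-F(u(-t)))=\OOO(|u_+||u_-|)$. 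This factorisation is the whole point: it gives a \emph{self-improving} bound
\[
\|\partial_th_R(0)\|_{L^2_R}\lesssim \|u_+\|_{L^2L^4}\|u_-\|_{L^2L^4}\lesssim \tfrac{R_0}{R}\eps\left(\tfrac{|c_1(R)|}{R}+\|\partial_th_R(0)\|_{L^2_R}\right),
\]
which closes to $\|\partial_th_R(0)\|_{L^2_R}\lesssim \tfrac{R_0}{R}\eps\,\tfrac{|c_1(R)|}{R}$. Your proposed route --- bounding $\|F(u)\|_{L^1L^2}$ directly --- loses this structure: $F(u)$ contains the square of the even (soliton-like) part, which does not carry the small factor $\|u_-\|$.

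A second, smaller issue: even granting a bound on $u_1^\perp$, the coefficient $c_1=c_1(R)$ of $r^{-4}$ in the decomposition of $\partial_tu(t_0)$ depends a priori on the truncation radius $R$, and you have not explained why it stabilises. The paper uses the self-improving bound above on dyadic scales to show $|c_1(R)-c_1(R')|\lesssim \tfrac{R_0}{R}\eps|c_1(R)|$ for $R\le R'\le 2R$, from which $c_1(R)\to \ell$ with the quantitative rate \eqref{nonradiative:quantitativebound}. Your sketch of the $\ell=0$ case and the $t_0$-independence of $\ell$ are morally right but also lean on this missing convergence; the paper handles the former by invoking the classification of \cite{CoDuKeMe22Pb} to match $u_0$ with a stationary solution on $\{r>R\}$, and the latter by the clean observation $\ell/\sqrt{2}=\lim_{R\to\infty}R\|\partial_tu(t)\|_{L^2_R}$, valid for every $t$.
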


 We will prove Proposition \ref{P:nonradiative}  as a consequence of the main result of \cite{CoDuKeMe22Pb}, which gives a complete classification of small non-radiative solutions of energy-critical wave equations (including \eqref{NLWabs} and \eqref{NLW}) outside wave cones. Let us mention however that the full strength of \cite{CoDuKeMe22Pb} is not needed here, and that only the conclusion of Proposition \ref{P:nonradiative} is necessary to prove the soliton resolution. See also Section 4 of \cite{CoDuKeMe22Pv1} for a self-contained proof. 
We will prove Proposition \ref{P:nonradiative} as a consequence of the following lemma:
\begin{lemma}
 \label{L:nonradiative}
 There exist constants $C>0$ and $\eps_0>0$ with the following property.
Let $t_0\in \RR$, $R_1\geq 0$ and $u$ be a $(t_0,R_1)$ non-radiative solution of \eqref{NLW} which is not a stationary solution. Then there exists $\ell \in \RR$, $\ell\neq 0$ such that if $R_0\geq R_1$ satisfies \eqref{smallness}, we have, for all $t\in \RR$, for all $R\geq R_0$, 
  \begin{equation} \label{nonradiative:quantitativebound'}
  \left\|\partial_t u(t)-\ell/r^4\right\|_{L^2_{R+|t|}}\leq C \eps^2\left(\frac{R_0}{R} \right)^{2}.
\end{equation}
 \end{lemma}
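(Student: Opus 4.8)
\textbf{Proof proposal for Lemma \ref{L:nonradiative}.}

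The plan is to combine the exterior linear estimate of Proposition \ref{P:CK1.4} (channels with right-hand side) with a fixed-point/bootstrap argument to control the nonlinear Duhamel term. First I would set up the problem at the base time $t_0$: since $u$ is a $(t_0,R_1)$ non-radiative solution and $R_0 \geq R_1$ satisfies \eqref{smallness}, by Lemma \ref{L:CK1.5} the solution on $\Ce_{t_0,R_0}$ is, up to a small nonlinear error, the free evolution of its data, and I may translate so $t_0=0$. Decompose the initial second component as $u_1 = \ell/r^4 + u_1^\perp$ with $\int_{R_0}^\infty u_1^\perp r^{-4} r^5 \, dr = 0$; here $\ell$ is forced to be the coefficient in the asymptotic expansion of $\partial_t u(t_0)$. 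Apply Proposition \ref{P:CK1.4} to $u - (\text{free evolution of } (u_0,0))$, i.e.\ to the part of the solution with data $(0,u_1)$ and right-hand side $F(u)\indic_{\Ce_{0,R_0}}$. The non-radiative hypothesis kills the $\lim_{t\to\infty}$ term, so \eqref{CK1.1} gives $\|u_1^\perp\|_{L^2_R} \lesssim \|F(u)\|_{L^1_t L^2_r(\Ce_{0,R})}$ for every $R \geq R_0$, and similarly for the cosine part one controls $\partial_r u_0$; the point is that $\partial_t u(t_0) - \ell/r^4$ agrees with $u_1^\perp$ on $\{r>R_0\}$.

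The heart of the matter is then estimating the nonlinear term $\|F(u)\|_{L^1_tL^2_r(\Ce_{0,R})}$ with the correct power $\varepsilon^2 (R_0/R)^2$. For $F(u) = u^2$ (and $|u|u$ is handled the same way since $\big||u|u\big| = u^2$), one has pointwise $|F(u)| \lesssim |u|^2$, and one needs a pointwise decay estimate of the form $|u(t,r)| \lesssim \varepsilon r^{-2}\langle\log\rangle$-type in $\Ce_{0,R_0}$ coming from the smallness of the data and Hardy-type/Strichartz bounds (using $\dot H^1 \subset Z_{-2}$ and the trace/radial Sobolev inequality $|u(r)| \lesssim r^{-2}\|u\|_{\dot H^1_r}$ in $\RR^6$). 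Feeding this into $\int_0^\infty \big(\int_{r>R+|t-\bar t|} |u|^4 r^5 dr\big)^{1/2} dt$ should, after integrating the $r$-weights, produce a bound $\lesssim \varepsilon^2 (R_0/R)^2$ — the exponent $2$ rather than $3/2$ is what one actually gets from the $u^2$ nonlinearity in dimension $6$ (it is then degraded to $3/2$ in Proposition \ref{P:nonradiative} after the reconnection to $R_1=0$). This has to be run as a bootstrap: assume \eqref{nonradiative:quantitativebound'} with a worse constant on $\{r>R'\}$ for $R' \geq$ some threshold, use it together with the channel estimate and the Strichartz bound \eqref{CK+local} to improve the constant, and close. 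One also needs to propagate the estimate to all times $t$ (not just $t=0$): since $u$ is $(t,R_1)$ non-radiative for every $t$ — or rather, since the exterior solution has finite exterior energy and one can re-center the cone — applying the same argument at a general base time $t$, with $R_0 + |t|$ playing the role of the radius, gives the $|t|$-shifted statement \eqref{nonradiative:quantitativebound'}.

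The main obstacle I anticipate is the low regularity of the nonlinearity $|u|u$ combined with the need for a clean pointwise-in-$r$ decay rate: one cannot differentiate $F$ freely, so the $L^1_t L^2_r$ estimate of $F(u)$ must be done via Strichartz norms in Besov spaces (the spaces $\sfS$, $\sfW$, $\sfW'$ and the multiplier property of $\indic_{\Ce_{0,R_0}}$ recalled before Lemma \ref{L:CK1.5}), and tracking the sharp power of $R_0/R$ through these requires care — in particular one must exploit that the data has size $\varepsilon$ concentrated outside $\{r>R_0\}$ and that the cone geometry gives extra decay as $R$ grows. A secondary subtlety is verifying $\ell \neq 0$: this uses that $u$ is \emph{not} a stationary solution, so if $\ell$ were $0$ the same estimates would force $\partial_t u(t_0)$ and hence $\vec u$ to decay fast enough at spatial infinity that, combined with the classification of small non-radiative solutions from \cite{CoDuKeMe22Pb}, $u$ would have to coincide with $0$ or $W^{\pm}_{(\lambda)}$, contradicting the hypothesis.
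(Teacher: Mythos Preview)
Your proposal has a genuine gap at the step where you apply Proposition~\ref{P:CK1.4}. You write: ``Apply Proposition~\ref{P:CK1.4} to $u-(\text{free evolution of }(u_0,0))$ \ldots The non-radiative hypothesis kills the $\lim_{t\to\infty}$ term.'' This is not correct. Let $v(t)=\cos(t\sqrt{-\Delta})u_0$. Then $u-v$ has data $(0,u_1)$ and solves $\Box(u-v)=F(u)$, so \eqref{CK1.1} gives
\[
\|u_1^{\perp}\|_{L^2_R}\lesssim \lim_{t\to\infty}\|\overrightarrow{(u-v)}(t)\|_{\HHH_{R+|t|}}+\|F(u)\|_{L^1L^2(\Ce_{0,R})}.
\]
The non-radiative hypothesis kills the contribution of $u$, but \emph{not} that of $v$: in dimension~$6$ the free evolution of $(u_0,0)$ is generically \emph{not} non-radiative (this is exactly the failure of \eqref{lower_bound} for even-in-time data discussed in the introduction). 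The surviving boundary term is of size $\|u_0\|_{\dot H^1_R}$, which a priori is only $O(\eps)$, nowhere near the target $\eps^2(R_0/R)^2$. Your remark that ``for the cosine part one controls $\partial_r u_0$'' presumes a channel estimate for $(u_0,0)$ that simply does not exist here.

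The paper avoids this by two ingredients you are missing. First, it symmetrizes: $u_-(t)=\tfrac12(u(t)-u(-t))$ has data $(0,u_1)$, \emph{is} non-radiative (because $u$ is non-radiative in both time directions), and solves an equation whose right-hand side is $\lesssim |u_+\,u_-|$; applying Proposition~\ref{P:CK1.4} to $u_-$ now legitimately kills the boundary term and, since $\|u_-\|_{L^2L^4(\Ce_{0,R})}\lesssim \|u_1\|_{L^2_R}$, yields a self-improving estimate on the orthogonal part of $u_1$ at each scale $R$. Second, the decay $\|\vec u(0)\|_{\HHH_R}\lesssim (R_0/R)\eps$ that you implicitly use to bound $\|F(u)\|_{L^1L^2(\Ce_{0,R})}$ is not free; it comes from the classification Theorem~1.2 of \cite{CoDuKeMe22Pb}, which the paper invokes at the outset (not just for $\ell\neq 0$ at the end). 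Finally, the orthogonal decomposition depends on $R$: one gets a coefficient $c_1(R)$ at each scale, and the paper runs a dyadic argument to show $c_1(R)\to\ell$ with the quantitative rate $|c_1(R)-\ell|\lesssim (R_0/R)R_0\eps^2$; your fixed decomposition at $R_0$ does not see this.
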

\begin{proof}[Proof of Proposition \ref{P:nonradiative} assuming the lemma]
 Let $u$ be as in the proposition and $t_0\in I$. Then by the lemma, we see that there exists $\ell=\ell(t_0)$ such that if \eqref{smallness} holds, then \eqref{nonradiative:quantitativebound'} is satisfied. Note that \eqref{nonradiative:quantitativebound'} at $t=t_0$  is exactly the desired bound \eqref{nonradiative:quantitativebound'}. We just need to check that $\ell$ is independent of $t_0$. 
 
 Since $\|r^{-4}\|_{L^2_R}=\frac{1}{R\sqrt{2}}$ we see that \eqref{nonradiative:quantitativebound'} implies that for all $t\in I$
 $$ \lim_{R\to\infty} R\|\partial_t u(t)\|_{L^2_R}=\ell/\sqrt{2}.$$
 Since the previous limit is independent of $t_0$, we obtain that 
 $\ell$ is also independent of $t_0$, concluding the proof.
\end{proof}

 \begin{proof}[Proof of Lemma \ref{L:nonradiative}]
 We use the main result of \cite{CoDuKeMe22Pb}, together with a symetrisation argument from \cite{DuKeMaMe21P}. Let $u$ be a $(0,R_1)$ non-radiative solution of \eqref{NLWabs} or \eqref{NLW}. Let $R_0\geq R_1$ such that \eqref{smallness} holds. For $\cbf=(c_0,c_1)\in \RR^2$, we will denote 
 $$ |\cbf|_R=|(c_0,c_1)|_R=\frac{|c_0|}{R^2}+\frac{|c_1|}{R},$$
 so that 
 \begin{equation}
  \label{NR10}
  \left\|\left( \frac{c_0}{r^4},\frac{c_1}{r^4} \right)\right\|_{\HHH_R}\approx |(c_0,c_1)|_R.
 \end{equation} 
 We assume without loss of generality $t_0=0$ to lighten notation.
 
 According to Theorem 1.2 of \cite{CoDuKeMe22Pb}, taking $\eps_0$ small enough, we have that for all $R\geq R_0$, there exists $\cbf(R)=(c_0(R),c_1(R))$ such that 
 \begin{gather}
  \label{NR11}
  \vec{u}(t,r)=\left(\frac{c_0(R)}{r^4},\frac{c_1(R)}{r^4}\right)+\vec{h}_R(t,r),\\
  \label{NR11''}\int_{R}^{\infty}\partial_r\left(\frac{1}{r^4}\right)\partial_r h_R(0,r)r^5dr=\int_{R}^{\infty}\frac{1}{r^4}\partial_th_R(0,r)r^5dr=0\\
\label{NR11'}
\forall t,\; \forall \widetilde{R}\geq R+|t|,\quad \left\|\vec{h}_R(t)\right\|_{\HHH_{\widetilde{R}}}\lesssim \frac{R}{\widetilde{R}}\left|\cbf(R)\right|^2_{R}.
 \end{gather} 
 We note that \eqref{smallness} implies $|\cbf(R)|_R\lesssim \eps$ for $R\geq R_0$. Thus by \eqref{NR11'} at $t=0$, $R=R_0$,
 \begin{equation*}
  \left\|h_{R_0}(0)\right\|_{\HHH_R}\lesssim \frac{R_0}{R}\eps^2.
 \end{equation*} 
 Combining with \eqref{NR11}, again at $t=0$, $R=R_0$, we obtain, for $R\geq R_0$
 \begin{equation}
  \label{est_u}
  \|(u_0,u_1)\|_{\HHH_R}\lesssim |\cbf(R_0)|_R+\|\vec{h}_{R_0}(0)\|_{\HHH_R}\lesssim \frac{R_0}{R}\eps+\frac{R_0}{R}\eps^2\lesssim \frac{R_0}{R}\eps.
 \end{equation} 
 Hence using the orthogonality \eqref{NR11''},
\begin{equation}
 \label{NR13}
 |\cbf(R)|_{R}\leq \frac{R_0}{R} \eps.
\end{equation}
Combining \eqref{est_u} with small data theory, we also obtain
\begin{equation}
\label{NR14}
\left\|\indic_{\{|x|>R+|t|\}}u\right\|_{L^2L^4}+\sup_{t\in \RR}\|u(t,r)\|_{\HHH_{R+|t|}}\lesssim \frac{R_0}{R}\eps. 
\end{equation} 
We let 
$u_{\pm}(t)=\frac{u(t)\pm u(-t)}{2}.$
Then 
\begin{equation}
 \label{equ-}
 \partial_t^2u_--\Delta u_-=\frac{1}{2}\left(F(u(t))-F(u(-t))\right),\quad \vec{u}_-(0)=(0,\partial_tu(0)),
\end{equation} 
where $F(u)=|u|u$ or $F(u)=u^2$. Since $\left|F(u(t))-F(u(-t))\right|\lesssim |u_+(t)u_-(t)|$, we deduce, using Strichartz estimates, H\"older and \eqref{NR14} that for $R\geq R_0$,
\begin{equation}
 \label{NRR20}
 \left\|u_-\indic_{\{|x|>R+|t|\}}\right\|_{L^2L^4}\lesssim \left\|\partial_tu(0)\right\|_{L^2_R}.
\end{equation} 
By the channel energy bound for odd solutions (see Proposition \ref{pr:channels}), \eqref{NR14} and \eqref{NRR20}
$$\left\|\partial_th_R(0)\right\|_{L^2_R} \lesssim \left\|u_+u_-\indic_{\{|x|\geq R+|t|\}}\right\|_{L^1L^2}\lesssim \frac{R_0}{R} \eps\left\| \frac{c_1(R)}{r^4}+\partial_th_R(0)\right\|_{L^2_R}.$$
Thus
\begin{equation}
 \label{NRR21}
 \left\|\partial_th_R(0)\right\|_{L^2_R}\lesssim \frac{R_0}{R} \eps\left\|\frac{c_1(R)}{r^4}\right\|_{L^2_R} =\frac{R_0}{R} \eps \frac{|c_1(R)|}{R^2}.
\end{equation} 
Let $R_0\leq R\leq R'\leq 2R$. By \eqref{NR11} at $R$ and $R'$ and \eqref{NRR21},
$$ \frac{1}{R^2}\left|c_1(R)-c_1(R')\right|\approx \left\| \frac{c_1(R)-c_1(R')}{r^4}\right\|_{L^2_{R'}}\lesssim \frac{R_0}{R} \times \frac{\eps |c_1(R)|}{R^2}.$$
That is
\begin{equation}
 \label{NRR22}
 \left|c_1(R)-c_1(R')\right|\lesssim \frac{R_0}{R}  \eps|c_1(R)|.
\end{equation} 

\noindent\textbf{Case 1.} There exists $R\geq R_0$ such that $c_1(R)=0$. Then by \eqref{NRR21}, $\partial_th_R(0,r)=0$ for $r\geq R$. Thus $\partial_tu(0,r)=0$ for $r\geq R$. 
Choosing a stationary solution 
$$ Z\in \Big\{0\Big\}\cup\Big\{W_{(\lambda)}^{\iota}, \;\iota\in \{\pm\},\lambda>0\Big\} $$
such that $Z$ has the same orthogonal projection on the space spanned by $\frac{1}{r^4}$ as $u_0(r)$, we see by Theorem 1.2 of \cite{CoDuKeMe22Pb} that we must have 
$ u_0(r)=Z(r)$
concluding the proof in this case.

\medskip

\noindent\textbf{Case 2.} For all $R\geq R_0$, $c_1(R)\neq 0$. Thus $c_1(R)$ has constant sign, say $c_1(R)>0$ for all $R\geq R_0$.  By \eqref{NRR22}, 
\begin{equation}
 \label{NRR23}
 \left|\frac{c_1(R')}{c_1(R)}-1\right|\lesssim \frac{R_0}{R} \eps, \quad R_0\leq R\leq R'\leq 2R.
\end{equation} 
In particular,
\begin{equation} 
 \label{NRR23'}
 \left|\frac{c_1(2^{k+1}R)}{c_1(2^kR)}-1\right|\lesssim \frac{1}{2^k}\frac{R_0}{R} \eps, \quad R_0\leq R, \; k\in \NN.
\end{equation} 
Fixing $R\geq R_0$ and letting $a_k(R)=\log\left(c_1(2^{k+1}R)\right)-\log\left(c_1(2^{k}R)\right)$, we see by \eqref{NRR23'} that $|a_k(R)|\lesssim \frac{1}{2^k}\frac{R_0}{R} \eps$. Thus $\sum_{k\geq 0} |a_k|\lesssim \frac{R_0}{R} \eps$. As a consequence, $\log(c_1(2^kR))$ has a limit $L=L(R)\in \RR$ such that 
\begin{equation}
 \label{NRR25}
\left|\log(c_1(R))-L\right|\lesssim \frac{R_0}{R}\eps.
 \end{equation} 
Of course $L(R)=L(2R)$, and by \eqref{NRR23}, \eqref{NRR25}, we conclude that $L$ is independent of $R$, and is the limit of $c_1(R)$ as $R\to\infty$. Letting $\ell=e^L>0$, we obtain 
$\left|\log\left( \frac{c_1(R)}{\ell} \right)\right|\lesssim \frac{R_0}{R} \eps$, which yields
$\left|\frac{c_1(R)}{\ell} -1\right|\lesssim \frac{R_0}{R} \eps$. In particular (letting $R=R_0$), $|\ell|\lesssim R_0\eps$, and thus
\begin{equation}
 \label{NR21}
 \left|c_1(R)-\ell\right|\lesssim \frac{R_0^{2}}{R}\eps^2,\quad R\geq R_0,
\end{equation} 
which yields
 \begin{equation}
  \label{NR32}
  \left\|\frac{c_1(R)-\ell}{r^4}\right\|_{L^2_R}\lesssim \eps^2\left( \frac{R_0}{R} \right)^{2}.
 \end{equation} 
Combining \eqref{NR11}, \eqref{NR11'}, \eqref{NR13} and \eqref{NR32}, we obtain \eqref{nonradiative:quantitativebound'}. 
 \end{proof}

%
%
%

In the next subsection, we will use Proposition \ref{P:nonradiative} to obtain a lower bound of the exterior scaling parameter for a non-radiative solution which is close to a multisoliton. This lower bound is crucial in the proof of Theorem \ref{T:main}, in Section \ref{S:proof_resolution}.

\subsubsection{Non-radiative solution close to a multisoliton}
\label{Sub:close_multisoliton}

In this subsection, we fix $J\geq 1$, and consider a radial solution $u$ of \eqref{NLWabs} or \eqref{NLW}, defined on $\left\{(t,x)\in \RR^6\;:\;|x|>t\right\}$, which is $(0,0)$ \emph{non-radiative} and has initial data $(u_0,u_1)\in \HHH$. We assume that there exists $\lambdabf=(\lambda_j)^J$, with $0<\lambda_J<\ldots<\lambda_1$ and signs $(\iota_j)_{1\leq j\leq J}\in \{\pm 1\}^J$, with the convention that $(\iota_j)_{1\leq j \leq J}\equiv (1,...,1)$ for Equation \eqref{NLW}, such that:
\begin{gather}
\label{F160}
 \left\|\vec{u}(0)-(M,0)\right\|_{\HHH}=:\delta\leq \eps_J \ll 1\quad \text{ where }M=\sum_{j=1}^J \iota_j W_{(\lambda_j)}\\
\label{F161'}
\gamma\leq \eps_J\ll 1,
 \end{gather}
where as before $\gamma:=\gamma(\lambdabf)=\max_{1\leq j\leq J-1} \lambda_{j+1}/\lambda_j$. 
Denote 
$$h_0=u_0-M.$$
Using the implicit function theorem (see Lemma B.1 in \cite{DuKeMe19Pb}), we can change the scaling parameters $(\lambda_j)_j$ so that the following orthogonality relations hold:
\begin{equation}
 \label{F162}
 \forall j\in \llbracket 1,J\rrbracket,\quad \int \nabla_x h_0\nabla_x (\Lambda W)_{(\lambda_j)}=0.
\end{equation} 
We expand $u_1=\partial_t u(0)$ as follows:
\begin{equation}
 \label{F164}
 u_1=\sum_{j=1}^J\alpha_j\left(\Lambda W\right)_{[\lambda_j]}+g_1,\qquad  \forall j\in \llbracket 1,J\rrbracket \quad \int g_1 (\Lambda W)_{[\lambda_j]} =0,
\end{equation}
where by definition $f_{[\lambda]}(x)=\lambda^{-3}f(x/\lambda)$ for $f\in L^2(\RR^6)$. 
We first prove:
\begin{lemma}
\label{L:estimateg1}
\begin{equation}
\label{F180}
\|h_0\|_{Z_{-2,\lambdabf}}\lesssim\delta^{2}+\gamma^{2}|\log \gamma|, \qquad     \left\|g_1\right\|_{L^2}\lesssim\delta^{2}+\gamma^2.
\end{equation} 
  \end{lemma}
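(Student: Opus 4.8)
The plan is to set $w:=u-M$ on $\Ce_{0,0}$, write down the linearized equation it satisfies, and extract the two bounds from the channels of energy estimates around a multisoliton together with the non-radiativity of $u$. Since $M=\sum_{j=1}^{J}\iota_jW_{(\lambda_j)}$ is a superposition of stationary solutions, so that $\partial_t^2M=0$ and $-\Delta M=\sum_j F(\iota_jW_{(\lambda_j)})$, the function $w$ solves on $\Ce_{0,0}$
\begin{equation*}
\partial_t^2w-\Delta w+V_{\lambdabf}w=\mathcal E_M+N_M(w),\qquad \vec w(0)=(h_0,u_1),
\end{equation*}
with $\mathcal E_M:=F(M)-\sum_j F(\iota_jW_{(\lambda_j)})$ the $t$-independent soliton-interaction error and $N_M(w):=F(M+w)-F(M)+V_{\lambdabf}w$, which is pointwise quadratic in $w$ up to a linear term carrying an $O(\gamma^2)$-small coefficient (for Equation~\eqref{NLW}, $N_M(w)=w^2$). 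Two features are decisive: $\mathcal E_M$ is even in $t$; and, by estimating the overlaps of consecutive solitons, $\|\mathcal E_M\|_{L^1_tL^2_r(\Ce_{0,0})}\lesssim\gamma^2|\log\gamma|$, the logarithm being produced by the intermediate zones $\lambda_{j+1}\ll r\ll\lambda_j$. One also needs the a priori bound $\sup_t\|\vec w(t)\|_{\HHH_{|t|}}+\|w\|_{L^2_tL^4_r(\Ce_{0,0})}\lesssim\delta+\gamma$: this follows from the exterior-cone well-posedness theory (Lemma~\ref{L:CK1.5}) applied to the linearized flow, using that on $\Ce_{0,0}$ the potential $V_{\lambdabf}$ is concentrated in $\{|t|\lesssim\lambda_1\}$ — where over such a time span its unstable direction contributes only a fixed multiplicative constant — while for $|t|\gtrsim\lambda_1$ the restriction of $V_{\lambdabf}$ to $\Ce_{0,0}$ has small norm and the free-wave theory applies perturbatively. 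In particular $\|N_M(w)\|_{L^1_tL^2_r(\Ce_{0,0})}\lesssim\delta^2+\gamma^2$, with no logarithm.

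The heart of the argument is an even/odd decomposition in time, as in \cite{DuKeMaMe21P}. Write $w=w_++w_-$ with $w_\pm(t)=\tfrac12(w(t)\pm w(-t))$; then $\vec w_+(0)=(h_0,0)$, $\vec w_-(0)=(0,u_1)$, and, since $\partial_t^2-\Delta+V_{\lambdabf}$ commutes with $t\mapsto-t$, $w_\pm$ solves the same linearized equation with source the $t$-even, respectively $t$-odd, part of $\mathcal E_M+N_M(w)$. Crucially, $\mathcal E_M$ drops out of the odd source, so that the odd source is only $\lesssim\delta^2+\gamma^2$ in $L^1_tL^2_r(\Ce_{0,0})$ — no logarithm — whereas the even source, which retains $\mathcal E_M$, is $\lesssim\delta^2+\gamma^2|\log\gamma|$. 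Moreover each $w_\pm$ is again $(0,0)$ non-radiative: $\|\vec w_\pm(t)\|_{\HHH_{|t|}}\le\tfrac12\bigl(\|\vec w(t)\|_{\HHH_{|t|}}+\|\vec w(-t)\|_{\HHH_{|t|}}\bigr)$, and $\|\vec w(t)\|_{\HHH_{|t|}}=\|\vec u(t)-\vec M\|_{\HHH_{|t|}}\to0$ as $t\to\pm\infty$ since $u$ is non-radiative and $M$ is stationary with $\nabla M\in L^2$. Applying the channels of energy estimate of Proposition~\ref{pr:channels} — in the inhomogeneous form obtained by the same method, legitimate since $\gamma\le\eps_J\le\gamma^*$ (we take $\eps_J$ small enough) — directly to the non-radiative solutions $w_\pm$, the exterior energy flux $\sum_{\pm}\lim_{t\to\pm\infty}\int_{r>|t|}|\nabla_{t,x}w_\pm|^2$ on the right-hand side vanishes, leaving only $\gamma^2\|\vec w_\pm(0)\|_{\HHH}^2\lesssim\gamma^2\delta^2$ and the squared $L^1_tL^2_r(\Ce_{0,0})$-norm of the corresponding source; since the orthogonality relations \eqref{F162} and \eqref{F164} say exactly that $\Pi_{\dot H^1,\lambdabf}^\perp h_0=h_0$ and $\Pi_{L^2,\lambdabf}^\perp u_1=g_1$, we obtain
\begin{equation*}
\|g_1\|_{L^2}\lesssim\delta^2+\gamma^2,\qquad \|\nabla h_0\|_{Z_{-3,\lambdabf}}\lesssim\delta^2+\gamma^2|\log\gamma|.
\end{equation*}

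It remains to pass from $\|\nabla h_0\|_{Z_{-3,\lambdabf}}$ to $\|h_0\|_{Z_{-2,\lambdabf}}$, which is a rescaled multisoliton version of the Hardy-type inequality of \S\ref{subsec:channelsunsoliton}. Writing $h_0(r)=-\int_r^{\infty}\partial_sh_0(s)\,ds$, applying Cauchy--Schwarz on dyadic annuli, and summing the resulting geometric series by an elementary comparison of $\langle\log(2^kr/\lambda_j)\rangle$ with $(1+k)\langle\log(r/\lambda_j)\rangle$, one gets the pointwise bound $|h_0(r)|\lesssim r^{-2}\,\inf_{1\le j\le J}\langle\log(r/\lambda_j)\rangle\,\|\nabla h_0\|_{Z_{-3,\lambdabf}}$, whence $\|h_0\|_{Z_{-2,\lambdabf}}\lesssim\|\nabla h_0\|_{Z_{-3,\lambdabf}}\lesssim\delta^2+\gamma^2|\log\gamma|$. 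This completes the proof.

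The main difficulty I anticipate is two-fold. First, making the channels step legitimate: one must dispose of the exponentially unstable mode of $-\Delta+V_{\lambdabf}$, which a naive splitting of $w_\pm$ into a homogeneous solution plus a Duhamel term does not achieve, since those two pieces may each grow while their sum does not. The correct point is that $w_\pm$ is itself non-radiative in both time directions, which forces the unstable component accumulated in $\vec w_\pm(0)$ to be controlled by the source, so that the required inhomogeneous form of Proposition~\ref{pr:channels} holds; this is where the estimates of \cite{CoDuKeMe22} are used in full, and where the weak (logarithmically lossy) nature of those estimates must be reconciled with the sharp powers $\delta^2$, $\gamma^2$ in the conclusion. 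Second, the sharp estimate of the interaction error $\mathcal E_M$ — in particular verifying that the logarithm genuinely enters only through the $t$-even part, so that the bound on $g_1$ carries no $|\log\gamma|$.
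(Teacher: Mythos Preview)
Your proposal is correct and takes essentially the same approach as the paper: linearize $u-M$ around the multisoliton, bound the source in $L^1_tL^2_r$ on the exterior cone (with the interaction error $\mathcal E_M$ contributing the $\gamma^2|\log\gamma|$), and feed the non-radiativity into Proposition~\ref{pr:channels}, using the odd-in-time part to drop the logarithm in the $g_1$ bound and the Hardy-type inequality to pass from $Z_{-3,\lambdabf}$ to $Z_{-2,\lambdabf}$. The only cosmetic difference is that the paper applies the channels estimate directly to the full $\tilde h$ for the $h_0$ bound (the logarithm is permitted there) and reserves the time-parity splitting for the sharper $g_1$ estimate, whereas you split even/odd symmetrically for both; the a priori $L^2_tL^4_r$ bound is obtained in the paper by the same bootstrap-plus-Strichartz reasoning you sketch.
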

\begin{proof}
The proof being the same for the $u^2$ and $|u|u$ nonlinearities, we only give it for the $u^2$ nonlinearity (Equation \eqref{NLW}), to ease notations. We let $h(t)=u(t)-M$. Then
 \begin{equation*}
  \partial_t^2h-\Delta h=M^2+2Mh +h^2-\sum_{j=1}^J W_{(\lambda_j)}^2.
 \end{equation*}
Thus 
$$\partial_t^2h+L_{\lambdabf}h=h^2-2\sum_{j\neq k} W_{(\lambda_j)}W_{(\lambda_k)}.$$
By finite speed of propagation, $h$ coincides, for $|x|>|t|$, with the solution $\tilde{h}$ of 
\begin{equation}
 \label{eq:htilde}
 \left\{
 \begin{aligned}
  \partial_t^2\tilde{h}+L_{\lambdabf} \tilde{h} &=\Big( h^2-2\sum_{j\neq k}W_{(\lambda_j)}W_{(\lambda_k)} \Big)\indic_{\{|x|>|t|\}} \\
 \tilde{h}_{\restriction t=0}&= (h_0,u_1)
 \end{aligned}
 \right.
\end{equation}
We can thus rewrite the first line of \eqref{eq:htilde} as 
$$\partial_t^2\tilde{h}+L_{\lambdabf} \tilde{h} =\Big( \tilde{h}^2-2\sum_{j\neq k}W_{(\lambda_j)}W_{(\lambda_k)} \Big)\indic_{\{|x|>|t|\}}$$
Since $\|(h_0,u_1)\|_{\HHH}=\delta$ and by explicit computations (see \eqref{estim2} in the appendix), if $j\neq k$, 
$\left\|\indic_{\{|x|>|t|\}}W_{(\lambda_j)}W_{(\lambda_k)} \right\|_{L^1L^2}\lesssim \gamma^2|\log \gamma|$, we deduce, using a standard bootstrap argument and Strichartz estimates,
\be \label{bd:nonradiativemulti:tildeh} \big\|\tilde{h}\big\|_{L^2_tL^4_x}\lesssim \delta+\gamma^2|\log \gamma|.\ee
By Proposition \ref{pr:channels} using that the solution $u$ is $(0,0)$ non-radiative, we obtain
\begin{multline*}
\|\nabla \Pi_{\dot{H}^1,\lambdabf}^{\bot}h_0\|_{Z_{-3,\lambdabf}}
\lesssim (\delta+\gamma^2|\log\gamma|)^2+\gamma^2|\log\gamma|+\gamma \|(h_0,u_1)\|_{\HHH}\\
\lesssim \delta^2+\gamma^2|\log\gamma|+\gamma\delta\lesssim \delta^2+\gamma^2|\log\gamma|. 
\end{multline*}
The above estimate, the Sobolev embedding estimate $\| \Pi_{\dot{H}^1,\lambdabf}^{\bot}h_0\|_{Z_{-2,\lambdabf}}\lesssim \|\nabla \Pi_{\dot{H}^1,\lambdabf}^{\bot}h_0\|_{Z_{-3,\lambdabf}}$, and $h_0=\Pi_{\dot{H}^1,\lambdabf}^{\bot}h_0$   imply the first inequality in \eqref{F180}. We define the odd component $\tilde h_-(t)=\frac{1}{2}(\tilde h(t)-h(-t))$ that solves:
$$\partial_t^2\tilde{h}_-+L_{\lambdabf} \tilde{h}_- =\frac 12 \Big( \tilde{h}^2(t)-\tilde h^2(-t) \Big)\indic_{\{|x|>|t|\}}.$$
By Proposition \ref{pr:channels} again, using that the solution $u$ is $(0,0)$ non-radiative and \eqref{bd:nonradiativemulti:tildeh}, we obtain
\begin{multline*}
\|\Pi_{L^2,\lambdabf}^{\bot}\pa_t \tilde h_-(0)\|_{L^2}=\|g_1\|_{L^2}\\
\lesssim (\delta+\gamma^2|\log\gamma|)^2+\gamma \|u_1\|_{L^2}\lesssim \delta^2+\gamma^4|\log\gamma|^2+\gamma \delta\lesssim \delta^2+\gamma^2. 
\end{multline*}
This is the second inequality in \eqref{F180}.

\end{proof}

\begin{proposition}[Lower bound on the exterior scaling parameter]
\label{P:lower_bound}
There is a constant $C_0>0$ with the following property.
Let $u$ be as above. Assume furthermore that $u$ is not a stationary solution. Then if $\eps_J$ is small enough
$$ \lambda_1\geq \frac{\ell}{C_0\sqrt{\delta}},$$
where $\ell\neq 0$ is given by Proposition \ref{P:nonradiative}.
\end{proposition}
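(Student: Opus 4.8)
The plan is to play off two pieces of information about $u_1:=\partial_tu(0)$ against each other. From \eqref{F160} one has $\|\nabla h_0\|_{L^2}\le\delta$ and, crucially, $\|u_1\|_{L^2}\le\delta$, so that $u_1$ is uniformly small on every exterior region: $\|u_1\|_{L^2_R}\lesssim\delta$ for all $R>0$. On the other hand, since $u$ is $(0,0)$ non-radiative and, by hypothesis, not a stationary solution, Lemma \ref{L:nonradiative} applies with $t_0=0$ and $R_1=0$ and produces a constant $\ell\neq0$ — the one appearing in Proposition \ref{P:nonradiative} — such that for every $R_0$ satisfying the smallness condition \eqref{smallness}, $\eps:=\|\vec u(0)\|_{\HHH_{R_0}}\le\eps_0$, one has $\|u_1-\ell/r^4\|_{L^2_{R_0}}\le C\eps^2$. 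Since $\|r^{-4}\|_{L^2_{R_0}}=(\sqrt2\,R_0)^{-1}$, the triangle inequality gives
\[
\frac{|\ell|}{\sqrt2\,R_0}=\big\|\ell/r^4\big\|_{L^2_{R_0}}\le\|u_1\|_{L^2_{R_0}}+\big\|u_1-\ell/r^4\big\|_{L^2_{R_0}}\lesssim\delta+\eps^2,
\]
so everything reduces to choosing $R_0$ as small as the constraint $\eps\le\eps_0$ allows — and, in particular, small compared with $\ell/\delta$.

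The one remaining ingredient is a quantitative bound on $\eps=\|\vec u(0)\|_{\HHH_{R_0}}$ when $R_0\gtrsim\lambda_1$. Writing $u_0=M+h_0$ with $M=\sum_{j=1}^J\iota_jW_{(\lambda_j)}$, using $\|\nabla h_0\|_{L^2}\le\delta$ and $\|u_1\|_{L^2}\le\delta$ from \eqref{F160}, and the elementary scaling estimate
\[
\big\|\nabla W_{(\lambda)}\big\|_{L^2_R}\lesssim\frac{\lambda^2}{R^2}\qquad(R\ge\lambda),
\]
which follows from the exterior decay $|W'(\rho)|\lesssim\rho^{-5}$ for $\rho\ge1$ (recall $\lim_{r\to\infty}r^4W(r)=c_W$), one obtains
\[
\eps\le\|\nabla M\|_{L^2_{R_0}}+2\delta\lesssim_J\frac{\lambda_1^2}{R_0^2}+\delta\qquad(R_0\ge\lambda_1).
\]
I would then simply choose $R_0:=\lambda_1/\sqrt\delta$; since $\delta\le\eps_J<1$ this does satisfy $R_0\ge\lambda_1$, and the last display yields $\eps\lesssim_J\delta$. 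Taking $\eps_J$ small enough in terms of $J$ and of the absolute constant $\eps_0$ then guarantees $\eps\le\eps_0$, so Lemma \ref{L:nonradiative} is indeed applicable, and the first display becomes $|\ell|\sqrt\delta/(\sqrt2\,\lambda_1)\lesssim_J\delta$, i.e.\ $|\ell|\lesssim_J\sqrt\delta\,\lambda_1$. For $\ell>0$ this is exactly the claimed bound $\lambda_1\ge\ell/(C_0\sqrt\delta)$ with $C_0=C_0(J)$, and for $\ell\le0$ it is trivial.

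I do not expect a serious obstacle. The two points requiring care are: (i) that Lemma \ref{L:nonradiative} (equivalently Proposition \ref{P:nonradiative}) genuinely applies and delivers $\ell\neq0$ — which holds precisely because $u$ is a $(0,0)$ non-radiative solution in the sense of Definition \ref{D:CK1.1} that is not stationary; and (ii) that $\eps_0$ is a \emph{fixed} small constant, so $R_0$ must be taken large relative to $\lambda_1$ in order that $\|\vec u(0)\|_{\HHH_{R_0}}\le\eps_0$ — exactly what $\eps_J\ll1$ provides together with the choice $R_0=\lambda_1/\sqrt\delta$. Note that this argument uses only the $L^2$ size of $u_1$ and the exterior $\dot H^1$ decay of the profile $M$, and in particular does not need the finer modulation information of Lemma \ref{L:estimateg1}. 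One could alternatively compare $\ell/r^4$ with the $r^{-4}$-tail of $\sum_j\alpha_j(\Lambda W)_{[\lambda_j]}$, whose coefficient is a fixed nonzero multiple of $\sum_j\alpha_j\lambda_j$, and invoke $\|g_1\|_{L^2}$; but the direct route above is shorter and gives the same conclusion.
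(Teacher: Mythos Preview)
Your proof is correct and follows essentially the same route as the paper: choose $R_0=\lambda_1/\sqrt{\delta}$, bound $\|(u_0,u_1)\|_{\HHH_{R_0}}\lesssim_J\delta$ via the exterior $\dot H^1$ decay of $M$ and $\|(h_0,u_1)\|_{\HHH}\le\delta$, apply Proposition~\ref{P:nonradiative} (equivalently Lemma~\ref{L:nonradiative}) at $R=R_0$, and finish with the triangle inequality against $\|\ell/r^4\|_{L^2_{R_0}}\approx|\ell|/R_0$. Your presentation is a bit more explicit than the paper's, but the argument is the same.
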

\begin{proof}
 Let $R_0=\lambda_1/\sqrt{\delta}$. Then 
 $$\|(u_0,u_1)\|_{\HHH_{R_0}}\lesssim\left\|(M,0)\right\|_{\HHH_{R_0}}+\delta$$
 and
 $$ 
 \|(M,0)\|_{\HHH_{R_0}}\leq \sum_{j=1}^{J}\big\|W_{(\lambda_j)}\big\|_{\dot{H}^1_{R_0}}=
\sum_{j=1}^{J}\left\|W\right\|_{\dot{H}^1_{R_0/\lambda_j}}\lesssim \delta.$$
 where we have used that $\|W\|_{\hdot_R}\approx R^{-2}$ for large $R$. As a consequence,
 $$ \|(u_0,u_1)\|_{\HHH_{R_0}}\lesssim \delta.$$
 Taking $\eps_J$ small enough we deduce, from Proposition \ref{P:nonradiative}
 $$C\delta \geq \|u_1\|_{L^2_{R_0}}\geq \| \ell r^{-4}\|_{L^2_{R_0}}-\|\partial_t\tilde{u}\|_{L^2_{R_0}}\geq \frac{\ell}{R_0}-C \delta^2\geq \frac{\ell \sqrt{\delta}}{\lambda_1}-C\delta^2.$$
 Taking a smaller $\eps_J$ if necessary, we obtain the conclusion of the proposition. 
\end{proof}

\section{Proof of the soliton resolution}
\label{S:proof_resolution}

In this Section, we prove Theorem \ref{T:main}. We first focus on the case $T_+(u)=+\infty$ for the $u^2$ nonlinearity (Equation \eqref{NLW}), and then treat the $|u|u$ nonlinearity (Equation \eqref{NLWabs}). The case $T_+(u)<+\infty$ can be treated similarly and we omit it. The proof follows the same lines as the proof in the odd-dimensional case (see \cite{DuKeMe19Pc}) and we will only detail the novelties.

\subsection{Setting of the proof}
\label{Sub:setting_proof}
Let $u$ be a solution of \eqref{NLW} such that $T_+(u)=+\infty$ and
\begin{equation}
 \label{R10}
 \limsup_{t\to+\infty} \|\vec{u}(t)\|_{\HHH}<\infty.
\end{equation} 
Let $v_L$ be the unique solution of the free wave equation $\partial^2_tv_L-\Delta v_L=0$ such that 
\begin{equation}
 \label{R11}
 \forall A\in \RR,\quad \lim_{t\to +\infty} \int_{|x|\geq A+|t|} |\nabla_{t,x}(u-v_L)(t,x)|^2\,dx=0
\end{equation}
(see \cite[Proposition 4.1]{Rodriguez16}, the proof there does not use that the dimension is odd and also works in even dimension).
For $J\geq 1$, $(f,g)\in \HHH$, we denote
\begin{equation}
\label{R12}
d_{J}(f,g)
=\inf_{\lambdabf \in \Lambda_J} \left\{\Big\|(f,g)-\sum_{j=1}^J (W_{(\lambda_j)},0)\Big\|_{\HHH}+\gamma(\lambdabf)\right\},
\end{equation} 
where $\Lambda_J=\{\lambdabf=(\lambda_1,\ldots,\lambda_J), \ 0<\lambda_J<\ldots<\lambda_2<\lambda_1 \}$, and as before:
$$
\gamma(\lambdabf)=\max_{2\leq j\leq J} \frac{\lambda_j}{\lambda_{j-1}} \in (0,1).
$$
Assume that $u$ does not scatter forward in time.
By \cite{JiaKenig17}, we know that there exists $J\geq 1$,  and a sequence $\{t_n\}_n\to+\infty$ such that
\begin{equation}
 \label{R13}
 \lim_{n\to\infty} d_{J}(\vec{u}(t_n)-\vec{v}_L(t_n))=0.
\end{equation} 
\begin{remark}
 The article \cite{JiaKenig17} treats the case of a nonlinearity of the form $|u|u$. However a slight modification of the argument yields \eqref{R13} for equation \eqref{NLW}. The following result is needed: 
 \begin{equation}
  \label{uniqueness_elliptic}
 -\Delta f=f^2,\; f\in \dot{H}^1_{\textrm{rad}}(\RR^6)\Longrightarrow f\equiv 0\text{ or }\exists \lambda>0,\; f \equiv W_{(\lambda)}. 
 \end{equation} 
 The classification of radial $\dot{H}^1$ solutions to $-\Delta f=|f|f$ on $\RR^6$ is well-known. To prove \eqref{uniqueness_elliptic}, it is thus sufficient to prove that any radial, $\dot{H}^1(\RR^6)$ of $-\Delta f=f^2$ is nonnegative. This follows from the fact that for such a solution, $\partial_r f$ is nonpositive. Indeed, $r^5 \partial_r f$ is nonincreasing (by the equation). Thus 
$$ \forall 0<r\leq r_0,\quad r^5 \partial_rf(r)\geq r_0^5\partial_rf(r_0),$$
and the fact that $\partial_r f\in L^2$ implies that $\partial_rf(r_0)$ cannot be positive. 
 \end{remark}
We will prove by contradiction that $\lim_{t\to\infty}d_J(\vec{u}(t)-\vec{v}_L(t))=0$. We thus assume that there exists a small $\eps_0>0$ and a sequence $\{\tilde{t}_n\}_n\to+\infty$ such that 
\begin{gather}
 \label{R14}
 \forall n,\quad \tilde{t}_n<t_n\\
 \label{R15}
 \forall n,\quad \forall t\in (\tilde{t}_n,t_n],\quad d_J(\vec{u}(t)-\vec{v}_L(t))<\eps_0\\
 \label{R16}
 d_J(\vec{u}(\tilde{t}_n)-\vec{v}_L(\tilde{t}_n))=\eps_0.
\end{gather}
We will denote 
$$U=u-v_L,\quad h(t)=u(t)-v_L(t)-M(t)=U(t)-M(t),\quad M(t)=\sum_{j=1}^J W_{(\lambda_j(t))}.$$
The implicit function theorem (see Lemma B.1 \cite{DuKeMe19Pb}) implies that for all $t\in [\tilde{t}_n,t_n]$, we can choose $\lambdabf(t)=(\lambda_1(t),\ldots,\lambda_J(t))\in \Lambda_J$ such that 
\begin{equation}
 \label{R17}
 \forall j\in \llbracket 1,J\rrbracket, \quad \int \nabla h(t)\cdot\nabla (\Lambda W)_{(\lambda_j(t))}=0,
 \end{equation}
 and, in view of Remark B.2 in \cite{DuKeMe19Pb},
 \begin{equation}
 \label{R18}
 \big\| \left(h(t),\partial_t U(t)\right)\big\|_{\HHH}+\gamma(\lambdabf) \approx d_J(\vec{u}(t)-v_L(t)).
\end{equation} 
In the sequel, we will denote

$$ \gamma(t)=\gamma(\lambdabf(t)), \quad \delta(t)=\sqrt{\|h(t)\|^2_{\hdot}+\|\partial_tU(t)\|^2_{L^2}}.$$

We will expand $\partial_tU=\partial_tu-\partial_tv_L$ as follows:
\begin{equation}
 \label{exp_dt}
 \partial_tU(t)=\sum_{j=1}^J \alpha_j(t) \Lambda W_{[\lambda_j(t)]}+g_1(t),
\end{equation}
where
\begin{equation}
 \label{exp_dt_ortho}
 \forall j\in \llbracket 1,J\rrbracket,\quad \int g_1(t)\Lambda W_{[\lambda_j(t)]}=0.
\end{equation} 
We also define:
\begin{equation}
 \label{R155}
 \beta_j(t)=-\int (\Lambda W)_{[\lambda_j(t)]}\partial_tU(t)\,dx.
\end{equation} 
\subsection{Expansion along a sequence of times and renormalisation}
\label{Sub:expansion}
Consider a sequence of times $\{s_n\}_n$ with $s_n\in [\tilde{t}_n,t_n]$ for all $n$. Extracting subsequences, we define a
partition of the interval $\llbracket 1,J\rrbracket$ as follows. We let $1=j_1<j_2<\ldots<j_{K+1}=J+1$, so that $\llbracket 1,J\rrbracket=\cup_{k=1}^K \llbracket j_k,j_{k+1}-1\rrbracket$, with
\begin{equation}
\label{RR3}
\forall k\in \llbracket 1,K-1\rrbracket,\quad
\lim_{n\to\infty} \frac{\lambda_{j_{k+1}}(s_n)}{\lambda_{j_k}(s_n)}=0.
\end{equation} 
and, 
\begin{equation}
 \label{defnuj}
 \forall k\in \llbracket 1,K\rrbracket, \; \forall j\in \llbracket j_k,j_{k+1}-1\rrbracket,\quad 
 \nu_j=\lim_{n\to\infty} \frac{\lambda_j(s_n)}{\lambda_{j_k}(s_n)}>0.
\end{equation} 
We note that $\nu_{j_k}=1$. We have (see Lemma 5.2 of \cite{DuKeMe19Pb}):
\begin{lemma}
\label{L:expansion}
Under the above assumptions, for all $k\in \llbracket 1, K\rrbracket$, there exists $(V_0^k,V_1^k)$ in $\HHH$ such that, denoting by $V^k$ the solution of \eqref{NLW} with initial data $(V_0^k,V_1^k)$, then $V^k$ is defined on $\{|x|>|t|\}$ and is $(0,0)$ non-radiative. Furthermore, letting $J^k=j_{k+1}-j_k$, and
$$ V_n^k(t,x)=\frac{1}{\lambda_{j_k}^2(s_n)} V^k\left( \frac{t}{\lambda_{j_k}(s_n)},\frac{x}{\lambda_{j_k}(s_n)} \right),$$
we have (extracting subsequences if necessary),
\begin{equation}
 \label{LRR1}
 \lim_{n\to\infty} \left\|\vec{u}(s_n)-\vv_L(s_n)-\sum_{k=1}^{K} \vec{V}_n^k(0)\right\|_{\HHH}=0
\end{equation} 
and
\begin{equation}
 \label{LRR2}
 d_{J^k}\left( V_0^k,V_1^k \right)\leq C\eps_0.
\end{equation} 
More precisely, after extraction,
\begin{equation}
 \label{LRR3}
 \left\{
\begin{aligned}
 V^k_0&=\sum_{j=j_k}^{j_{k+1}-1}W_{(\nu_j)} +\check{h}_0^k\\
 V^k_1&=\sum_{j=j_k}^{j_{k+1}-1} \check{\alpha}_j(\Lambda W)_{[\nu_j]} +\check{g}_1^k,
\end{aligned}
 \right.
\end{equation}
where 
\begin{align}
\label{LRR4}
 \check{h}_0^k&=\underset{n\to\infty}{\wlim}\,\lambda_{j_k}^{2}(s_n)h\left(s_n,
 \lambda_{j_k}(s_n) \cdot\right)\\
 \label{LRR5}
 \check{\alpha}_j&=\lim_{n\to\infty}\alpha_j(s_n)\\
\label{LRR6}
 \check{g}_1^k&=\underset{n\to\infty}{\wlim}\, \lambda_{j_k}^{3}(s_n)g_1\left(s_n,\lambda_{j_k}(s_n) \cdot\right),
\end{align}
the first weak limit taking place in $\dot H^1$ and the second one in $L^2$. Furthermore, we have 
\begin{equation}
 \label{LRR7}
 JE(W,0)=\sum_{k=1}^K E\left( \vec{V}^k(0) \right)
\end{equation} 
\end{lemma}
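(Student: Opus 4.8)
The plan is to adapt the profile-decomposition argument of \cite[Lemma 5.2]{DuKeMe19Pb} to the six-dimensional, low-regularity setting of the present paper, the only genuinely new inputs being the channels-of-energy estimates of Section \ref{S:channels_lin} and the classification of non-radiative solutions from Section \ref{Sub:nonradiative}. First I would observe that by \eqref{R15}--\eqref{R18}, along the sequence $\{s_n\}$ the function $\vec{u}(s_n)-\vec{v}_L(s_n)$ stays within $C\eps_0$ of the multisoliton manifold, with modulation parameters $\lambdabf(s_n)$ and coefficients $\alpha_j(s_n)$, $\beta_j(s_n)$ bounded; after extraction one may assume all the ratios $\lambda_{j+1}(s_n)/\lambda_j(s_n)$ converge, which produces the cluster decomposition $1=j_1<\cdots<j_{K+1}=J+1$ with \eqref{RR3} and the limiting internal ratios $\nu_j$ of \eqref{defnuj}. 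For each cluster $k$ one renormalises at scale $\lambda_{j_k}(s_n)$: the rescaled data $\bigl(\lambda_{j_k}^2(s_n)h(s_n,\lambda_{j_k}(s_n)\cdot),\ \lambda_{j_k}^3(s_n)g_1(s_n,\lambda_{j_k}(s_n)\cdot)\bigr)$ is bounded in $\HHH$ (here $\dot H^1\times L^2$), so extracting further one gets weak limits $\check h_0^k$, $\check g_1^k$ as in \eqref{LRR4}, \eqref{LRR6}, and $\check\alpha_j=\lim\alpha_j(s_n)$; together with the $K$ separate soliton sums this defines $(V_0^k,V_1^k)$ via \eqref{LRR3}, and the bound \eqref{LRR2} follows from \eqref{R18} and the near-orthogonality of distinct clusters in $\HHH$.

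Next I would prove \eqref{LRR1}: one must show the sum of the $K$ rescaled solutions $\vec V_n^k(0)$ recaptures $\vec u(s_n)-\vec v_L(s_n)$ up to $o(1)$ in $\HHH$. The solitonic part is handled by the explicit decoupling estimates on $W_{(\lambda_j)}$ (those quoted around \eqref{estim2}), and the error/radiation part by Pythagorean expansion of the weak limits in the scale-separated regime \eqref{RR3}, exactly as in the odd-dimensional proof; the subtlety in $N=6$ is purely one of bookkeeping with the weaker $Z_{\alpha,\lambdabf}$ norms and the $|u|u$ nonlinearity, but since $W\in L^4(\RR^6)$ and the Strichartz estimates \eqref{CK1}, \eqref{CK+local} are available, the same splitting goes through. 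The non-radiativity of each $V^k$ on $\{|x|>|t|\}$ is inherited from that of $u$ on the exterior cone: the profiles live at asymptotically separated scales, the finite speed of propagation localises the evolution of each, and the exterior energy of $\vec u(s_n)-\vec v_L(s_n)$ tends to $0$ as $t\to\pm\infty$ by definition of $v_L$ in \eqref{R11} (using \eqref{JiaKenig17}-type sequential decomposition); one pushes this through the rescaling and passes to the weak limit using lower semicontinuity of the exterior-cone energy, concluding $V^k$ is $(0,0)$ non-radiative. That each $V^k$ is actually defined on the whole exterior cone $\{|x|>|t|\}$ follows from the small-data exterior theory of Lemma \ref{L:CK1.5} together with \eqref{LRR2} and a bootstrap.

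Finally, for the energy identity \eqref{LRR7}, I would combine the Pythagorean expansion of the energy $E(\vec u(s_n)-\vec v_L(s_n))=\sum_k E(\vec V^k(0))+o(1)$ — again a consequence of the scale separation \eqref{RR3} and standard interaction estimates for the nonlinearity — with the fact that, by the sequential soliton resolution of \cite{JiaKenig17} and conservation of energy, $E(\vec u(s_n)-\vec v_L(s_n))\to J\,E(W,0)$ as $n\to\infty$; passing to the limit yields \eqref{LRR7}. I expect the main obstacle to be the verification that the weak limits $\check h_0^k$, $\check g_1^k$ assemble into genuine \emph{solutions} $V^k$ that remain non-radiative on the full cone $\{|x|>|t|\}$ rather than merely on some smaller exterior cone: this requires carefully tracking the finite-speed-of-propagation regions under the rescalings $\lambda_{j_k}(s_n)\to 0$, controlling the nonlinear interaction between a profile and the tails of the others (which is where the $\gamma^2|\log\gamma|$ losses of Lemma \ref{L:estimateg1} and the weak $Z_{-3,\lambdabf}$ bounds of Proposition \ref{pr:channels} enter), and using the low regularity of $|u|u$ with care in the Besov-Strichartz spaces $\sfW,\sfW'$.
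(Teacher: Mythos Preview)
Your overall strategy is correct and is exactly what the paper does: it simply refers to \cite[Lemma 5.2]{DuKeMe19Pb} without giving an independent proof, so an adaptation of the odd-dimensional profile-decomposition argument is precisely what is intended.

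That said, you are overcomplicating the dimension-specific aspects. The proof of this lemma requires \emph{none} of the new six-dimensional inputs you mention: neither the channels-of-energy estimates of Proposition \ref{pr:channels}, nor the $Z_{-3,\lambdabf}$ norms, nor Lemma \ref{L:estimateg1} (which is in fact proved \emph{using} Lemma \ref{L:expansion}, not the other way around). The argument of \cite[Lemma 5.2]{DuKeMe19Pb} uses only the nonlinear profile decomposition, the well-posedness theory on exterior cones (here Lemma \ref{L:CK1.5} and the Besov--Strichartz estimates \eqref{CK1}), and the definition of $v_L$; all of these are dimension-insensitive once the Cauchy theory is in place. The ``main obstacle'' you anticipate, namely controlling interactions between profiles via $\gamma^2|\log\gamma|$ losses, is not an issue here: the decoupling of profiles at separated scales is a soft consequence of weak convergence and the Pythagorean expansion, not of the refined interaction estimates.

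One small imprecision worth flagging: the non-radiativity of $V^k$ in \emph{both} time directions does not follow from ``$\vec u(s_n)-\vec v_L(s_n)$ tends to $0$ as $t\to\pm\infty$ by definition of $v_L$''. The definition \eqref{R11} only concerns $t\to+\infty$. The backward non-radiativity of $V^k$ (as $\tau\to-\infty$) also comes from the \emph{forward} behaviour of $u-v_L$, because $s_n\to+\infty$ while $\lambda_{j_k}(s_n)\ll s_n$, so that for any fixed (rescaled) $\tau$ the original time $s_n+\lambda_{j_k}(s_n)\tau$ still tends to $+\infty$. This is how the argument runs in \cite{DuKeMe19Pb}.
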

Note that the orthogonality conditions \eqref{R17} and \eqref{exp_dt_ortho} and the limits \eqref{defnuj}, \eqref{LRR4}, \eqref{LRR5} and \eqref{LRR6} imply the orthogonality conditions
\begin{align}
 \label{RR4}
 \forall j\in \llbracket j_k,j_{k+1}-1\rrbracket
 \quad \int \nabla \check{h}_0^{k}\cdot \nabla(\Lambda W)_{(\nu_j)}=
 \int \check{g}_1^{k}\cdot (\Lambda W)_{[\nu_j]}=
 0
\end{align}
Also, we have the following expansion for all time outside the wave cone (see Claim 5.3 in \cite{DuKeMe19Pb})
\begin{equation}
\label{R60bis}
u(s_n+\tau)=v_L(s_n+\tau)+\sum_{k=1}^K V_n^k(\tau)+r_n(\tau),
\end{equation} 
where
\begin{equation*}
 \lim_{n\to\infty} \sup_{\tau}\int_{|x|\geq |\tau|} |\nabla_{\tau,x}r_n|^2\,dx=0.
\end{equation*} 
\subsection{Estimates on $\lambda_j$ and $\beta_j$}
\label{Sub:estim_modulation}
In this section and the next one, we will prove:
\begin{proposition}
\label{P:modulation}
 Let $u$, $J$, $\lambda_j$, $\beta_j$ be as above. Then for $\eps_0$ small enough, and $n$ large, for all $t\in [\tilde{t}_n,t_n]$, and all $j\in \llbracket 1,J\rrbracket$
 \begin{gather}
  \label{smallness_betaj}
  \beta_j^2(t) \leq C\gamma^{2}(t)+o_n(1)\\
    \label{smallness_lambdaj'}
  \left|\lambda_j'(t)-\kappa_2\beta_j(t)\right|\leq C\gamma^{2}(t)+o_n(1)\\
  \label{derivative_betaj}
  \left|\lambda_j(t)\beta'_j(t)-\kappa_0\left(\left( \frac{\lambda_{j+1}(t)}{\lambda_j(t)}\right)^2-\left( \frac{\lambda_j(t)}{\lambda_{j-1}(t)} \right)^2\right)\right|\leq C\gamma^{3}(t)+o_n(1),
 \end{gather}
 where $o_n(1)\to 0$ uniformly for $t\in [t_n,\tilde{t}_n]$, as $n\to\infty$
The constants $\kappa_0,\kappa_2$ are explicit positive constants that are independent of the parameters. The constant $C$ depends only on $J$. 
 \end{proposition}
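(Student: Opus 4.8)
The plan is to follow the modulation analysis of the odd-dimensional case (\cite{DuKeMe19Pb}, \cite{DuKeMe19Pc}), the point specific to dimension $6$ being to carry it through despite the logarithmic losses built into the exterior-energy bounds of Section \ref{S:channels_lin}. First I would record the algebraic relations. Using $-\Delta W_{(\lambda)}=W_{(\lambda)}^2$ and $\partial_\lambda W_{(\lambda)}=-\lambda^{-1}(\Lambda W)_{(\lambda)}=-(\Lambda W)_{[\lambda]}$, differentiating $M(t)=\sum_j W_{(\lambda_j(t))}$ gives $\partial_t M(t)=-\sum_j\lambda_j'(t)(\Lambda W)_{[\lambda_j(t)]}$, so by \eqref{exp_dt}, $\partial_t h(t)=\sum_j(\alpha_j(t)+\lambda_j'(t))(\Lambda W)_{[\lambda_j(t)]}+g_1(t)$. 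In dimension $6$, $\Lambda W=O(r^{-4})\in L^2(\RR^6)$ — the feature shared with the odd case $N\ge5$ and absent for $N=4$ — so the Gram matrices of $\{(\Lambda W)_{[\lambda_j]}\}_j$ in $L^2$ and of $\{(\Lambda W)_{(\lambda_j)}\}_j$ in $\dot H^1$ are, for $\gamma$ small, small perturbations of multiples of the identity, the off-diagonal entries being $O(\gamma)$ by the explicit estimates of the appendix. With \eqref{exp_dt_ortho} this gives $\beta_j=-\|\Lambda W\|_{L^2}^2\,\alpha_j+O(\gamma\delta)$, so $\beta_j$ and $\alpha_j$ are interchangeable up to admissible errors.

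Next I would derive the modulation ODEs. Differentiating \eqref{R17} in $t$ and inserting the expansion of $\partial_t h$ expresses $\alpha_j+\lambda_j'$ in terms of $g_1$, $h$ and $\gamma$; with the relation above this becomes $\lambda_j'=\kappa_2\beta_j+(\mathrm{err}_j)$ with $\kappa_2>0$ explicit. For \eqref{derivative_betaj} I would differentiate \eqref{R155}: the part coming from $\partial_t[(\Lambda W)_{[\lambda_j(t)]}]$ contributes only $O(|\lambda_j'|(\gamma\delta+\|g_1\|_{L^2}))$ (an algebraic identity kills its diagonal part), while $-\int(\Lambda W)_{[\lambda_j]}\partial_t^2U$ is computed from the equation $\partial_t^2U=-L_{\lambdabf}h+2\sum_{j'<k}W_{(\lambda_{j'})}W_{(\lambda_k)}+h^2+(\text{terms in }v_L)$ together with $L_{\lambdabf}(\Lambda W)_{[\lambda_j]}=-2\big(\sum_{k\ne j}W_{(\lambda_k)}\big)(\Lambda W)_{[\lambda_j]}$. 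The dominant term is the nearest-neighbour interaction, which by explicit computation yields $\lambda_j^{-1}\kappa_0\big((\lambda_{j+1}/\lambda_j)^2-(\lambda_j/\lambda_{j-1})^2\big)+O(\gamma^3/\lambda_j)$, all else being errors.

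The errors are of three types, and controlling them is where the real work lies. Interaction integrals $\int W_{(\lambda_j)}W_{(\lambda_k)}(\cdots)$, $j\ne k$, are bounded by the appendix estimates (cf.\ \eqref{estim2}); one must check that the $|\log\gamma|$ they produce in dimension $6$ always carries an extra power of $\gamma$ (non-adjacent solitons, or lower-order tails), so that it stays below the orders claimed in \eqref{smallness_lambdaj'}, \eqref{derivative_betaj}. Terms with $h$ or $g_1$ are estimated by the time-dependent analogue of Lemma \ref{L:estimateg1},
\[
\|h(t)\|_{Z_{-2,\lambdabf(t)}}\lesssim\delta(t)^2+\gamma(t)^2|\log\gamma(t)|+o_n(1),\qquad\|g_1(t)\|_{L^2}\lesssim\delta(t)^2+\gamma(t)^2+o_n(1),
\]
uniform on $[\tilde t_n,t_n]$; since the modulation identities pair $h$ only against well-localised, fast-decaying profiles (e.g.\ $\Delta(\Lambda^2 W)_{(\lambda_j)}=O(r^{-8})$, a further scaling derivative of $\Lambda W$, or $W_{(\lambda_k)}(\Lambda W)_{[\lambda_j]}$), these pairings are controlled by $\|h\|_{Z_{-2,\lambdabf}}$ with no logarithmic loss at the scale $\lambda_j$, which is what lets \eqref{smallness_lambdaj'}, \eqref{derivative_betaj} hold with a clean power of $\gamma$. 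The $o_n(1)$ remainders come from the renormalisation of Lemma \ref{L:expansion}: were one of the three estimates to fail uniformly on $[\tilde t_n,t_n]$ one could extract $s_n\in[\tilde t_n,t_n]$ along which it fails by a fixed amount, but then the rescaled profiles $\vec V^k$ are $(0,0)$-non-radiative, \eqref{LRR1} holds strongly, and Lemma \ref{L:estimateg1}, Proposition \ref{P:nonradiative} and Proposition \ref{pr:channels} applied to the genuinely non-radiative $V^k$ (for which the unstable modes are automatically controlled) yield the bounds for the $V^k$; transporting these back through \eqref{LRR1}, with pieces at separated scales having additive norms, contradicts the failure.

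Combining everything, a bootstrap based on \eqref{R15}--\eqref{R18} (so that $\delta(t)+\gamma(t)\le C\eps_0$ is small), together with a coercivity bound for $L_{\lambdabf}$ on the orthogonal complement of the modulation directions — obtained in the same compactness step, bounding $\delta(t)$ by powers of $\gamma(t)$ and $\big(\sum_j\beta_j^2(t)\big)^{1/2}$ up to $o_n(1)$ — absorbs the $\delta^2$-contributions and gives \eqref{smallness_betaj}, \eqref{smallness_lambdaj'}, \eqref{derivative_betaj}. The hard part is precisely this interplay: showing that the logarithmic losses of the norms $Z_{\alpha,\lambdabf}$ — unavoidable in dimension $6$ because of the resonant direction $r^{-2}$, equivalently because the channel bound of Proposition \ref{pr:channels} holds only in $Z_{-3,\lambdabf}$ and not in $\dot H^1$ — never surface at leading order in the modulation ODEs, while running the compactness argument uniformly in $t$ so as to produce a genuine $o_n(1)$; the low regularity of $u^2$ and $|u|u$ makes the Strichartz/Besov estimates entering this step delicate as well.
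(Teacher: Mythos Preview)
Your proposal is correct and follows essentially the same approach as the paper: the paper obtains \eqref{smallness_betaj}--\eqref{derivative_betaj} from exactly the ingredients you list --- the relation \eqref{alphabeta} between $\alpha_j$ and $\beta_j$, the time-dependent bounds $\|h(t)\|_{Z_{-2,\lambdabf(t)}}\lesssim\delta^2+\gamma^2|\log\gamma|+o_n(1)$ and $\|g_1(t)\|_{L^2}\lesssim\delta^2+\gamma^2+o_n(1)$ proved by the compactness/contradiction argument via Lemma \ref{L:expansion} and Lemma \ref{L:estimateg1}, and the differentiation of \eqref{R17} and \eqref{R155}. The one point to sharpen is that the crucial relation $\delta(t)\approx\gamma(t)+o_n(1)$ is obtained in the paper not through coercivity of $L_{\lambdabf}$ but by expanding the conserved nonlinear energy $E(\vec u-\vec v_L)\to JE(W,0)$ around the multisoliton (Lemma \ref{L:energy}); this is what turns the $\delta^2$ error terms into $\gamma^2$ and closes the estimates.
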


\begin{proof}

Proposition \ref{P:modulation} is proved in the forthcoming lemmas. The estimate \eqref{smallness_betaj} follows from \eqref{alphabeta}, $|\alpha_j|\lesssim \delta$ and \eqref{gamma_delta}. The estimate \eqref{smallness_lambdaj'} follows from \eqref{F200}, \eqref{alphabeta} and \eqref{gamma_delta}. Finally, \eqref{derivative_betaj} is proved in Lemma \ref{L:second_derivative}.

\end{proof}

\begin{lemma}
 There exists a constant $C>0$, depending only on $J$ such that 
 \begin{gather}
  \label{est.h}
  \forall t\in [\tilde{t}_n,t_n], \quad \|h(t)\|_{Z_{-2,\lambdabf(t)}}\lesssim \delta^2+\gamma^2|\log \gamma|+o_n(1)\\
  \label{est.g1}
   \forall t\in [\tilde{t}_n,t_n], \quad \|g_1(t)\|_{L^2}\lesssim \delta^2+\gamma^2+o_n(1)\\
  \label{alphabeta}
  \forall t\in [\tilde{t}_n,t_n],\quad \left|\beta_j+\alpha_j\|\Lambda W\|_{L^2}^2\right|\lesssim \gamma\delta.
 \end{gather}
\end{lemma}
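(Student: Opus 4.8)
The statements \eqref{est.h}--\eqref{alphabeta} are the time‑dependent counterparts, along the sequence $\{\tilde t_n\}$, of the static estimates of Lemma~\ref{L:estimateg1}. My plan is to prove \eqref{alphabeta} by a direct computation, and to reduce \eqref{est.h}--\eqref{est.g1} to Lemma~\ref{L:estimateg1} by a compactness (contradiction) argument combined with the profile decomposition of Lemma~\ref{L:expansion}.

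\emph{Proof of \eqref{alphabeta}.} Insert the expansion \eqref{exp_dt} of $\partial_tU(t)$ into the definition \eqref{R155} of $\beta_j$. The $g_1$‑term is annihilated by \eqref{exp_dt_ortho}, the diagonal contribution is exactly $-\alpha_j\|\Lambda W\|_{L^2}^2$, and one is left with $\beta_j+\alpha_j\|\Lambda W\|_{L^2}^2=-\sum_{i\neq j}\alpha_i\int (\Lambda W)_{[\lambda_i]}(\Lambda W)_{[\lambda_j]}\,dx$. Since $\Lambda W(\rho)=O(\rho^{-4})$ as $\rho\to\infty$, a scale–separation estimate of the type recorded in the appendix gives $\big|\int (\Lambda W)_{[\lambda_i]}(\Lambda W)_{[\lambda_j]}\,dx\big|\lesssim \min(\lambda_i,\lambda_j)/\max(\lambda_i,\lambda_j)\le\gamma$ for $i\neq j$; and $|\alpha_i|\lesssim\delta$ because the $(\alpha_i)$ solve the linear system whose Gram matrix is $\|\Lambda W\|_{L^2}^2\operatorname{Id}+O(\gamma)$ (hence invertible with uniformly bounded inverse for $\eps_0$ small) and whose right‑hand side $(\beta_i)$ satisfies $|\beta_i|\le\|\Lambda W\|_{L^2}\|\partial_tU\|_{L^2}\lesssim\delta$. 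Summing the at most $J$ off‑diagonal terms gives \eqref{alphabeta}.

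\emph{Proof of \eqref{est.h}--\eqref{est.g1}.} Suppose the bound fails for every constant; a diagonal extraction produces a subsequence and times $s_n\in[\tilde t_n,t_n]$ along which, say, $\|h(s_n)\|_{Z_{-2,\lambdabf(s_n)}}$ exceeds $C\big(\delta(s_n)^2+\gamma(s_n)^2|\log\gamma(s_n)|\big)$ plus a fixed positive amount, for every $C$ (the $g_1$ case is analogous). Apply Lemma~\ref{L:expansion} along $\{s_n\}$: one gets the cluster partition, the non‑radiative profiles $V^k$ — each close to a $J^k$‑multisoliton with scale vector $\nubf^k$, scale‑separation $\gamma^k$, and defect $\delta^k:=\|(\check h^k_0,V^k_1)\|_{\HHH}$ (with $\delta^k,\gamma^k\lesssim\eps_0$ by \eqref{LRR2}) — together with \eqref{LRR1}--\eqref{LRR6} and the orthogonality \eqref{RR4}. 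For each $k$ with $V^k$ not stationary, Lemma~\ref{L:estimateg1} gives $\|\check h^k_0\|_{Z_{-2,\nubf^k}}\lesssim(\delta^k)^2+(\gamma^k)^2|\log\gamma^k|$ and $\|\check g^k_1\|_{L^2}\lesssim(\delta^k)^2+(\gamma^k)^2$; when $V^k$ is stationary, \eqref{RR4} forces $\check h^k_0=\check g^k_1=0$. It remains to reassemble. First, inter‑cluster ratios tend to $0$, so $\gamma(s_n)\to\max_k\gamma^k$; since $x\mapsto x^2|\log x|$ is increasing on $(0,e^{-1/2}]\supset(0,C\eps_0]$, this yields $(\gamma^k)^2|\log\gamma^k|\le\gamma(s_n)^2|\log\gamma(s_n)|$ and $(\gamma^k)^2\le\gamma(s_n)^2$. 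Second, the profiles $\vec V^k_n(0)$ live at the asymptotically separated scales $\lambda_{j_k}(s_n)$, so by asymptotic $\HHH$‑orthogonality together with $\sum_k(\text{rescaled }M^k)=M(s_n)+o_n(1)$ one gets $\sum_k(\delta^k)^2=\delta(s_n)^2+o_n(1)$. Third, with $K\le J$ clusters, \eqref{LRR1}--\eqref{LRR6} give $h(s_n)=\sum_k(\check h^k_0)_{(\lambda_{j_k}(s_n))}+o_n(1)$ in $\dot{H}^1$ and $g_1(s_n)=\sum_k(\check g^k_1)_{[\lambda_{j_k}(s_n)]}+o_n(1)$ in $L^2$. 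Since the $L^2$‑rescaling is an isometry, $\|g_1(s_n)\|_{L^2}\le\sum_k\|\check g^k_1\|_{L^2}+o_n(1)\lesssim\delta(s_n)^2+\gamma(s_n)^2+o_n(1)$, contradicting the failure of \eqref{est.g1}. For \eqref{est.h} one uses in addition the scaling identity $\|f_{(\mu)}\|_{Z_{-2,\lambdabf}}=\|f\|_{Z_{-2,\lambdabf/\mu}}$ and the fact that enlarging the list of scales by entries tending to $0$ or $\infty$ perturbs $\|\check h^k_0\|_{Z_{-2,\cdot}}$ by only $o_n(1)$ (the affected dyadic shells recede to $0$ or $\infty$, where $\|\nabla\check h^k_0\|_{L^2}$ is already $o(1)$), hence $\|(\check h^k_0)_{(\lambda_{j_k}(s_n))}\|_{Z_{-2,\lambdabf(s_n)}}\le\|\check h^k_0\|_{Z_{-2,\nubf^k}}+o_n(1)$, and summing yields $\|h(s_n)\|_{Z_{-2,\lambdabf(s_n)}}\lesssim\delta(s_n)^2+\gamma(s_n)^2|\log\gamma(s_n)|+o_n(1)$, the desired contradiction.

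The hard part is the third step for \eqref{est.h}: passing from the profile‑wise bounds in the norms $Z_{-2,\nubf^k}$ to a bound on the reconstruction $h(s_n)$ in the single norm $Z_{-2,\lambdabf(s_n)}$, because the scales composing $\lambdabf(s_n)$ split into clusters whose mutual ratios diverge as $n\to\infty$, and one has to verify that the logarithmic weights built into $Z_{-2,\lambdabf}$ — introduced in \cite{DuKeMe19Pb} precisely to make such resummations go through — interact harmlessly with this splitting; the $g_1$ estimate is painless by comparison, as $L^2$‑rescaling is isometric and log‑free. A secondary check is that the radiation $v_L$ contributes nothing extra: this is absorbed in Lemma~\ref{L:expansion}, which already accounts for $v_L$ by producing exactly non‑radiative profiles, so at the level of the present lemma $v_L$ is pure bookkeeping and the argument follows the odd‑dimensional one of \cite{DuKeMe19Pb}, the only new input being the $6$D channel‑of‑energy bound of Proposition~\ref{pr:channels} used inside Lemma~\ref{L:estimateg1}.
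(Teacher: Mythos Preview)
Your proof is correct and follows essentially the same strategy as the paper: a direct computation for \eqref{alphabeta}, and for \eqref{est.h}--\eqref{est.g1} a contradiction argument via Lemma~\ref{L:expansion}, applying Lemma~\ref{L:estimateg1} to each non-radiative profile $V^k$ and then reassembling using $\gamma(s_n)\to\max_k\gamma_k$ and $\delta^2(s_n)\to\sum_k\delta_k^2$. The paper's treatment of the $Z_{-2,\lambdabf}$ reassembly is slightly more explicit---it introduces intermediate radii $R^k_n=\sqrt{\lambda_{j_{k+1}-1}(s_n)\lambda_{j_{k+1}}(s_n)}$ to localize each $(\check h^k_0)_{(\lambda_{j_k}(s_n))}$ to an annulus where the full infimum $\inf_{1\le j\le J}\langle\log(R/\lambda_j)\rangle$ coincides with the cluster infimum $\inf_{j_k\le j<j_{k+1}}\langle\log(R/\lambda_j)\rangle$, and handles the complement via radial Sobolev---but your phrasing in terms of ``enlarging the list of scales by entries tending to $0$ or $\infty$ perturbs the $Z$-norm by $o_n(1)$'' encodes the same mechanism.
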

\begin{proof}

\textbf{Step 1.} \emph{Proof of \eqref{est.h} and \eqref{est.g1}}. We adapt the proof of Lemma 5.4 in \cite{DuKeMe19Pb}. We argue by contradiction and assume that, up to extracting a subsequence, there exists $(s_n)_n$ with $s_n\in [t_n,\tilde t_n]$ such that for any $L>0$, an $\epsilon_1>0$ exists such that for all $n$ large:
\be \label{errorbd:bd:hypothesis}
\begin{array}{l l}\| h(s_n)\|_{Z_{-2,\lambdabf(s_n)}} \geq L[\delta^2(s_n)+\gamma^2(s_n)|\log \gamma(s_n)|]+\epsilon_1,\\
\mbox{or } \quad \| g_1(s_n)\|_{L^2} \geq L[\delta^2(s_n)+\gamma^2(s_n)]+\epsilon_1.
\end{array}
\ee
In the proof, $C>0$ denotes a generic constant that is independent of $L$. Using Lemma \ref{L:expansion}, there exist $K\leq J$ and for each $1\leq k\leq K$ a non-radiative solution $V^k$ with initial data $(V^k_0,V^k_1)$ given by:
\begin{align*}
& V^k_0=\sum_{j_k}^{j_{k+1}-1}W_{(\nu_j)}+\check h_0^k, \\
& V^k_1=\sum_{j_k}^{j_{k+1}-1}\check \alpha_j (\Lambda W_{(\nu_j)})_{[\nu_j]}+\check g_1^k,
\end{align*}
(the notation $j_k$, $\nu_j$, $\check h^k_0$, $\check g_1^k$ being introduced in lemma \ref{L:expansion}) with
\be \label{errorbd:id:inter3}
\check\alpha_j=\lim_{n\to \infty}\alpha_j(s_n),
\ee
such that:
\be \label{errorbd:id:inter2}
\lim_{n\to \infty} \| \vec u(s_n)-\vec v_L(s_n)-\sum_{k=1}^K \vec V_{(\lambda_{j_k}(s_n))}(0) \|_{\mathcal H}=0.
\ee
We introduce (with the convention that $\gamma_k=0$ if $j_{k+1}=j_k+1$)
$$
\delta_k^2=\| \check h^k_0\|_{\dot H^1}+\| \pa_t V^k(0)\|_{L^2}^2, \qquad \gamma_k=\max_{j_k\leq j\leq j_{k+1}-1}\frac{\nu_{j+1}}{\nu_{j}}.
$$
Then, notice that, using \eqref{errorbd:id:inter2} and $\lambda_{j_{k+1}}/\lambda_{j_{k}}\to 0$ for all $k=1,...,K-1$,
\be \label{errorbd:id:inter1}
\delta^2(s_n)\rightarrow \sum_{k=1}^K \delta_k^2=:\delta_\infty \qquad \mbox{and} \qquad \gamma(s_n)\to \max_{1\leq k \leq K}\gamma_k=:\gamma_\infty.
\ee
As $V^k$ is non-radiative, applying Lemma \ref{L:estimateg1} and then using \eqref{errorbd:id:inter1} we obtain
\be \label{errorbd:id:inter5}
\| \check h_0^k \|_{Z_{-2,\nubf_k}}\leq C (\delta^2_\infty+\gamma_\infty^2|\log \gamma_\infty|) \quad \mbox{and} \quad \| \check g_1^k\|_{L^2}\leq C (\delta^2_\infty+\gamma_\infty^2),
\ee
where $\nubf_k=(\nu_{j_k},...,\nu_{j_{k+1}-1})$. We next remark that \eqref{errorbd:id:inter2} and \eqref{errorbd:id:inter3} imply:
\be \label{errorbd:id:inter6}
 \| h(s_n)-\sum_{k=1}^K \check h^k_{(\lambda_{j_k}(s_n))}(0)\|_{\dot H^1}+ \| g_1(s_n)-\sum_{k=1}^K \check g^k_{[\lambda_{j_k}(s_n)]}(0)\|_{L^2}\to 0.
\ee
We claim that for any $k=1,...,K$:
\be \label{errorbd:id:inter4}
\| \check h^k_{(\lambda_{j_k}(s_n))}(0)\|_{Z_{-2,\lambdabf(s_n)}}\leq C(\delta^2_\infty+\gamma^2_\infty|\log \gamma_\infty|)+o_n(1).
\ee
Then, combining \eqref{errorbd:id:inter6}, \eqref{errorbd:id:inter5} and \eqref{errorbd:id:inter4} shows:
$$
 \| h(s_n)\|_{Z_{-2,\lambdabf(s_n)}}\leq C(\delta^2_\infty+\gamma^2_\infty |\log \gamma_\infty|)+o_n(1) \; \mbox{and} \;\|  g_1(s_n)\|_{L^2}\leq C (\delta^2_\infty+\gamma_\infty^2)+o_n(1),
$$
contradicting \eqref{errorbd:bd:hypothesis} and \eqref{errorbd:id:inter1} for large enough $L$ and $n$. Hence the bounds \eqref{est.h} and \eqref{est.g1} of the lemma.

It then remains to show \eqref{errorbd:id:inter4}. We introduce $R_{n}^k= \sqrt{\lambda_{j_{k+1}-1}(s_n)\lambda_{j_{k+1}}(s_n)} $ for $k=1,...,K-1$, $R^0_{n}=\infty $ and $R^K_{n}=0$, and decompose:
$$
\check h^k_{(\lambda_{j_k}(s_n))}(0)= \indic_{\{R_{n}^k\leq |x|\leq R_{n}^{k-1}\}}h^k_{(\lambda_{j_k}(s_n))}(0)+\left(\indic_{\{|x|\leq R_{n}^k\}}+\indic_{\{|x|\geq R_{n}^{k-1}\}}\right) h^k_{(\lambda_{j_k}(s_n))}(0).
$$
Since $\check h^k_0\in \dot H^1$, we have $\check h_0^k(x)=o(|x|^{-2})$ as $|x|\to 0 $ and $|x|\to \infty$ by the radial Sobolev embedding. Since $R^k_n/\lambda_{j_{k}(s_n)}\to 0$ and $R^{k-1}_n/\lambda_{j_{k}(s_n)}\to \infty$ as $n\to \infty$, this implies:
$$
\left\| \Big(\indic_{\{|x|\leq R_{n}^k\}}+\indic_{\{|x|\geq R_{n}^{k-1}\}} \Big) h^k_{(\lambda_{j_k}(s_n))}(0)\right\|_{Z_{-2,\lambdabf(s_n)}}\to 0
$$
as $n\to \infty$, by the definition of the $\| \cdot \|_{Z_{-2,\lambdabf}}$ norm. Still by definition of the $\| \cdot \|_{Z_{-2,\lambdabf}}$ norm:
\begin{align*}
& \left\| \indic_{\{R_{n}^k\leq |x|\leq R_{n}^{k-1}\}}h^k_{(\lambda_{j_k}(s_n))}(0)\right\|_{Z_{-2,\lambdabf(s_n)}}  \ = \ \left\| \indic_{\left\{\frac{R_{n}^k}{\lambda_{j_k}(s_n)}\leq |x|\leq \frac{R_{n}^{k-1}}{\lambda_{j_k}(s_n)}\right\}} h^k_0\right\|_{Z_{-2,\frac{\lambdabf_k(s_n)}{\lambda_{j_k}(s_n)}}}\\
&\qquad \leq \| h^k_0\|_{Z_{-2,\frac{\lambdabf_k(s_n)}{\lambda_{j_k}(s_n)}}} \sim_{n\to \infty}\| h^k_0\|_{Z_{-2,\nubf_k}}
\end{align*}
where we wrote $\frac{\lambdabf_k(s_n)}{\lambda_{j_k}(s_n)}=(\frac{\lambda_j(s_n)}{\lambda_{j_k}(s_n)})_{j_k\leq j \leq j_{k+1}-1}$ and used $\frac{\lambda_j(s_n)}{\lambda_{j_k}(s_n)}\to \nu_j$. Combining the two above inequalities and \eqref{errorbd:id:inter5} shows the desired claim \eqref{errorbd:id:inter4} and ends Step 1.\\
  
\noindent \textbf{Step 2}. \emph{Proof of \eqref{alphabeta}}. We write 
 \begin{multline*}
  \beta_j(t)=-\int (\Lambda W)_{[\lambda_j]}\partial_tU\\
  =- \underbrace{\int (\Lambda W)_{[\lambda_j]} g_1}_{=0}-\alpha_j \|\Lambda W\|^2_{L^2}-\sum_{k\neq j}\alpha_k \int (\Lambda W)_{[\lambda_j]}(\Lambda W)_{[\lambda_k]}.
 \end{multline*}
 Since $|\alpha_k|\lesssim \delta$ and 
$\left|\int (\Lambda W)_{[\lambda_j]}(\Lambda W)_{[\lambda_k]}\right|\lesssim \gamma$ for $j\neq k$, \eqref{alphabeta} follows.
 \end{proof}
 We next prove, using the expansion of the energy:
 \begin{lemma}
 \label{L:energy}
 We have $J\geq 2$, and
  \begin{gather}
  \label{bound_delta2}
   \left|\frac{1}{2}\delta^2-\kappa_1\sum_{j=1}^{J-1}\left( \frac{\lambda_{j+1}}{\lambda_j} \right)^2\right|\lesssim o_n(1)+\gamma^{3},\\
   \label{gamma_delta}
   \gamma\approx \delta +o_n(1).
  \end{gather}
for some absolute constant $\kappa_1>0$.
 \end{lemma}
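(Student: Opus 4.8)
The plan is to turn conservation of the nonlinear energy $E$ into an algebraic relation between $\delta(t)$ and the scale ratios $\lambda_{j+1}(t)/\lambda_j(t)$, via a Pythagorean expansion of $E$ along the profile decomposition. \textbf{Step 1 (pinning the energy).} By \eqref{R13} the data $\vec u(t_n)-\vec v_L(t_n)$ is, up to $o_n(1)$ in $\HHH$, a sum of $J$ rescaled copies of $(W,0)$ with scale ratios tending to $0$; expanding $E$ on such a configuration (interactions being $O(\gamma^2)$) gives $E(\vec u(t_n)-\vec v_L(t_n))\to JE(W,0)$. Since $E(\vec u(t_n))=E(\vec u)$ and $\tfrac12\|\vec v_L(t_n)\|_{\HHH}^2$ are conserved and $\vec v_L$ disperses away from every fixed light cone, the asymptotic decoupling of $\vec v_L$ from $\vec u-\vec v_L$ in the energy (built into the choice \eqref{R11} of $v_L$) yields the \emph{exact} identity $E(\vec u)=\tfrac12\|\vec v_L\|_{\HHH}^2+JE(W,0)$. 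Running the same decoupling at an arbitrary $t\in[\tilde t_n,t_n]$, with the error $o_n(1)$ uniformly there (using dispersion of $v_L$ and $\lambda_1(t)\ll t$), gives $E\big(M(t)+h(t),\partial_tU(t)\big)=JE(W,0)+o_n(1)$.

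\textbf{Step 2 (Taylor expansion around $M$).} I expand, for the $u^2$ nonlinearity (the $|u|u$ case being identical),
\[
E(M+h,\partial_tU)=E(M,0)+\langle dE(M,0),(h,\partial_tU)\rangle+\tfrac12\langle d^2E(M,0)(h,\partial_tU),(h,\partial_tU)\rangle+O(\delta^3).
\]
Since each $W_{(\lambda_j)}$ solves $-\Delta W_{(\lambda_j)}=W_{(\lambda_j)}^2$, setting $I_{jk}:=\int\nabla W_{(\lambda_j)}\cdot\nabla W_{(\lambda_k)}=\int W_{(\lambda_j)}W_{(\lambda_k)}^2=\int W_{(\lambda_j)}^2W_{(\lambda_k)}>0$, an explicit computation with $W(r)\sim c_W r^{-4}$ gives $I_{j,j+1}=\kappa_1(\lambda_{j+1}/\lambda_j)^2(1+o(1))$ with $\kappa_1=\|W\|_{L^2}^2>0$, while the remaining interaction terms are $O(\gamma^4|\log\gamma|)$; hence
\[
E(M,0)=JE(W,0)-\kappa_1\sum_{j=1}^{J-1}\Big(\tfrac{\lambda_{j+1}}{\lambda_j}\Big)^2+O(\gamma^4|\log\gamma|).
\]
The linear term is a pure interaction, $dE(M,0)=\big(-\Delta M-M^2,0\big)=\big(-\sum_{j\neq k}W_{(\lambda_j)}W_{(\lambda_k)},0\big)$, and a weighted $L^2$ estimate adapted to $\|\cdot\|_{Z_{-2,\lambdabf}}$ bounds it by $O(\gamma^2|\log\gamma|^2\,\|h\|_{Z_{-2,\lambdabf}})$. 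The quadratic term equals $\|\nabla h\|_{L^2}^2+\|\partial_tU\|_{L^2}^2-2\int Mh^2=\delta^2-2\int Mh^2$, and the pointwise bound $W_{(\lambda_j)}(r)\lesssim\lambda_j^{-2}\min(1,(\lambda_j/r)^4)$, summed over dyadic shells, gives $\int Mh^2\lesssim\|h\|_{Z_{-2,\lambdabf}}^2$. The crucial input here is \eqref{est.h}: $\|h\|_{Z_{-2,\lambdabf}}\lesssim\delta^2+\gamma^2|\log\gamma|+o_n(1)$.

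\textbf{Step 3 (conclusion).} Combining Steps 1 and 2 and cancelling $JE(W,0)$,
\[
\tfrac12\delta^2-\kappa_1\sum_{j=1}^{J-1}\Big(\tfrac{\lambda_{j+1}}{\lambda_j}\Big)^2=O(\|h\|_{Z_{-2,\lambdabf}}^2)+O(\gamma^2|\log\gamma|^2\|h\|_{Z_{-2,\lambdabf}})+O(\delta^3)+O(\gamma^4|\log\gamma|)+o_n(1).
\]
Using \eqref{est.h}, Young's inequality, and that $\delta\le C\eps_0$, $\gamma\le\eps_J$ are small, the right-hand side is $\le\tfrac13\delta^2+C\gamma^3+o_n(1)$; hence $\delta^2\lesssim\sum_j(\lambda_{j+1}/\lambda_j)^2+o_n(1)\lesssim\gamma^2+o_n(1)$, i.e. $\delta\lesssim\gamma+o_n(1)$, and since $\sum_j(\lambda_{j+1}/\lambda_j)^2\ge\gamma^2$ the same identity gives $\gamma^2\lesssim\delta^2+o_n(1)$, which is \eqref{gamma_delta}. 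Re-injecting $\delta\approx\gamma+o_n(1)$ into the error terms upgrades them to $O(\gamma^3)+o_n(1)$ (the surviving logarithms now multiply $\gamma^4$), giving \eqref{bound_delta2}. Finally, if $J=1$ then $M=W_{(\lambda_1)}$ is a critical point of $E$, the interaction vanishes, and the identity forces $\tfrac12\delta^2=O(\delta^3)+o_n(1)$, so $\delta(t)\to0$ uniformly on $[\tilde t_n,t_n]$; together with $\gamma\equiv0$ for $J=1$ and \eqref{R18} this contradicts \eqref{R16}. Hence $J\ge2$.

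\textbf{Expected main obstacle.} The delicate point is the bookkeeping of logarithmic losses: the norm $\|\cdot\|_{Z_{-2,\lambdabf}}$ carries $\langle\log\rangle$ weights, and in dimension $6$ the integrals $\int W_{(\lambda_j)}h^2$, $\int W_{(\lambda_j)}W_{(\lambda_k)}h$, and the corrections to $I_{j,j+1}$ all sit at the borderline of logarithmic divergence, so crude estimates produce errors of size $\gamma^3|\log\gamma|^{O(1)}$ rather than the clean $\gamma^3$ of \eqref{bound_delta2}. This forces the argument to be run in two passes — first extract $\delta\approx\gamma+o_n(1)$ from the crude bound, then re-inject it so that the logarithms land on powers $\gamma^4|\log\gamma|^{O(1)}\ll\gamma^3$. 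A secondary technical point is ensuring that the $o_n(1)$ coming from the decoupling of $v_L$ is uniform over $t\in[\tilde t_n,t_n]$.
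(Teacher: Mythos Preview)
Your proposal is correct and follows essentially the same approach as the paper: both expand the energy $E(M+h,\partial_tU)$ around $JE(W,0)$, isolate the nearest-neighbour interaction $\sum_j(\lambda_{j+1}/\lambda_j)^2$ as the main term, bound the errors $\int Mh^2$ and $\int W_{(\lambda_j)}W_{(\lambda_k)}h$ via the $Z_{-2,\lambdabf}$ estimate \eqref{est.h}, and run the two-pass bootstrap (first extract $\delta\approx\gamma+o_n(1)$, then upgrade the logarithmic errors to $O(\gamma^3)$). One minor slip: your constant $\kappa_1=\|W\|_{L^2}^2$ is not quite right---the paper's computation gives $\kappa_1=c_W\int |x|^{-4}W^2\,dx$ with $c_W=24^2$, coming from the asymptotics $W(r)\sim c_W r^{-4}$---but this does not affect the argument.
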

\begin{proof}
 Recall that 
 $$\lim_{t\to\infty} E(\vec{u}(t)-\vec{v}_L(t))=JE(W,0).$$
Expanding the energy
$$E(\vec{u}-\vec{v}_L)=E\left( \sum_{j=1}^J W_{(\lambda_j)}+h,\partial_t(u-v_L) \right),$$
we obtain
\begin{multline}
 \label{En10} 
 \bigg|\frac{1}{2}\|\partial_t(u-v_L)\|^2_{L^2}+\frac{1}{2}\|\nabla h\|^2_{L^2}\\
 +\frac{J}{2}\|\nabla W\|^2_{L^2}-\frac{J}{3}\|W\|^3_{L^3} -JE(W,0)+\sum_{1\leq j\leq J} \int \nabla W_{(\lambda_j)}\cdot \nabla h-\sum_{j=1}^{J}W_{(\lambda_j)}^2h\\
 +\sum_{1\leq j<k\leq J} \int \nabla W_{(\lambda_j)}\cdot\nabla W_{(\lambda_k)}-\sum_{\substack{1\leq j,k\leq J\\j\neq k}}  W_{(\lambda_j)}^2W_{(\lambda_k)} 
 \bigg| \\
 \lesssim \|h\|^3_{L^3}+\sum_{j=1}^J \int h^2 W_{(\lambda_j)}+\sum_{j\neq k} W_{(\lambda_j)}W_{(\lambda_k)} |h|+o_n(1)
\end{multline}
The first line of \eqref{En10} is exactly $\frac 12 \delta^2(t)$.  
 
The second line of \eqref{En10} equals to $0$, by the definition of the energy and the equation satisfied by $W$. 

Noting that for all $j,k$, we have
$ \int\nabla W_{(\lambda_j)}\cdot\nabla W_{(\lambda_k)}=\int W_{(\lambda_j)}^2W_{(\lambda_k)},$
we see that the third line of \eqref{En10} is equal to  
$-\sum_{1\leq j<k\leq J} W_{(\lambda_j)}^2W_{(\lambda_k)}.$
Furthermore, by direct computations (see the proof of Lemma 5.5 in \cite{DuKeMe19Pb}) for $1\leq j<k\leq J$:
$$\int W_{(\lambda_j)}^2W_{(\lambda_k)}=\left( \frac{\lambda_k}{\lambda_j} \right)^2\int \frac{(24)^2}{|x|^4} W^2dx+\OOO\left( \left(\frac{\lambda_k}{\lambda_j}\right)^4\left|\log \frac{\lambda_k}{\lambda_j} \right| \right).$$
Thus, introducing $\kappa_1=\int \frac{(24)^2}{|x|^4}W^2dx$, the third line of \eqref{En10} is equal to
$$ -\kappa_1\sum_{j=1}^{J-1}\left(\frac{\lambda_{j+1}}{\lambda_j} \right)^2+\OOO(\gamma^4|\log \gamma|).$$

We next consider the fourth line of \eqref{En10}. We have 
\begin{multline}
\label{En20}
  \int h^2W_{(\lambda_j)}dx=\sum_{k\in \ZZ} \int_{2^k\lambda_j}^{2^{k+1}\lambda_{j}} h^2(x)W_{(\lambda_j)}(x)dx\\
  \lesssim \sum_{k\in \ZZ} \left(\sup_{2^k\lambda_j\leq |x|\leq 2^{k+1}\lambda_j} W_{(\lambda_j)}(x)\right)\int_{2^k\lambda_j}^{2^{k+1}\lambda_j}h^2(x)dx\\
\lesssim \sum_{k\in \ZZ} \min\left( \frac{1}{\lambda_j^2},\frac{1}{2^{4k}\lambda_{j}^2} \right)\left( 2^{2k}\lambda_j^2 \right)\left\langle \log(2^k) \right\rangle^2\|h\|^2_{Z_{-2,\lambdabf}},
  \end{multline}
  where we have used 
  $$ 0\leq W_{(\lambda_j)}(x)\lesssim \min \left( \frac{1}{\lambda_j^2},\frac{\lambda_j^2}{|x|^4} \right)$$
  and the definition of $Z_{-2,\lambdabf}$. Thus
\begin{equation}
  \label{En30}
  \int h^2W_{(\lambda_j)}dx\lesssim \sum_{k\in \ZZ} \min\left( 2^{2k},2^{-2k} \right)\langle k\rangle ^2 \|h\|^2_{Z_{-2,\lambdabf}}\lesssim \gamma^4|\log \gamma|^2+\delta^4+o_n(1) 
\end{equation} 
  by \eqref{est.h}. In the case where $J=1$, the right-hand side is replaced by $\delta^4$. This implies $\delta(t)=o_n(1)$, a contradiction with the definition of $\tilde{t}_n$. Thus $J\geq 2$.
  
  Also, for $k<j$, using $0<W_{(\lambda_j)}(x)\lesssim \lambda_j^2 |x|^{-4}$ for $|x|\geq \sqrt{\lambda_j\lambda_k}$ and introducing $i_0=\lfloor \log_2 \sqrt{\frac{\lambda_j}{\lambda_k}}\rfloor$, a similar computation to \eqref{En20}-\eqref{En30} gives:
  \begin{multline*}
\int_{|x|\geq \sqrt{\lambda_j\lambda_k}} W_{(\lambda_j)}W_{(\lambda_k)}|h| \\
\lesssim \  \sum_{i\geq i_0} \sup_{2^k\lambda_j\leq |x|\leq 2^{k+1}\lambda_j} W_{(\lambda_k)}(x)\left( \int_{2^i\lambda_k}^{2^{i+1}\lambda_k} \frac{\lambda_j^4}{|x|^{8}}\right)^{\frac 12}\left( \int_{2^i\lambda_k}^{2^{i+1}\lambda_k} h^2\right)^{\frac 12}\\
\qquad \qquad  \lesssim  \lambda_j^2 \sum_{i\geq i_0} \min \left( \frac{1}{\lambda_k^2},\frac{1}{2^{4k}\lambda_k^2}\right) 2^{-2k}\lambda^{-2}_k 2^{2k}\lambda^{2}_k \langle \log 2^k \rangle^2 \| h\|_{Z_{-2,\lambdabf}} \\
 \qquad \qquad \qquad \lesssim \frac{\lambda_j^2}{\lambda_k^2}\left|\log \frac{\lambda_j}{\lambda_k}\right| \| h\|_{Z_{-2,\lambdabf}}  \ \lesssim \ \gamma^2 |\log \gamma| \delta^2+\gamma^{4}|\log \gamma|^2+o_n(1).
      \end{multline*}
One obtains similarly $\int_{|x|\leq \sqrt{\lambda_j\lambda_k}} W_{(\lambda_j)}W_{(\lambda_k)}|h|\lesssim \gamma^2 |\log \gamma| \delta^2+\gamma^4|\log \gamma|^2+o_n(1)$ and hence:
  \be
  \label{En31}
\sum_{k\neq j} \int W_{(\lambda_j)}W_{(\lambda_k)}| h| \lesssim  \gamma^2 |\log \gamma| \delta^2+\gamma^4|\log \gamma|^2+o_n(1)\lesssim \delta^4+\gamma^4|\log \gamma|^2+o_n(1).
  \ee
Combining the estimates above, we obtain that the left-hand side of \eqref{bound_delta2} is bounded by $C(\gamma^4|\log \gamma|^2+\delta^3)+o_n(1)$. Since 
  $\sum_{j=1}^{J-1} \left(\frac{\lambda_{j+1}}{\lambda_j} \right)^{2}\approx \gamma^2,$
  the conclusion of the lemma follows.
\end{proof}

Thanks to Lemma \ref{L:energy}, the estimates \eqref{est.h}, \eqref{est.g1} and \eqref{alphabeta} give:
 \be
  \label{est.g12}
  \forall t\in [\tilde{t}_n,t_n], \;\|h(t)\|_{Z_{-2,\lambdabf(t)}}\lesssim \gamma^2|\log \gamma|+o_n(1) \;\mbox{and} \; \|g_1(t)\|_{L^2}\lesssim \gamma^{2}+o_n(1).
 \ee

\subsection{System of equations and estimates on the derivatives}
\label{Sub:derivatives}
Under the above assumptions, using that 
$h(t)=U(t)-\sum_{j=1}^JW_{(\lambda_j)}=U(t)-M(t)$ and expanding the nonlinear wave equation \eqref{NLW}, we see that $(h(t),\partial_tU(t))$ satisfy the following system of equations for $t\in [\tilde{t}_n,t_n]$,
 \begin{equation}
  \label{R130}
  \left\{
  \begin{aligned}
   \frac{\partial h}{\partial t}&=\frac{\partial U}{\partial t}+\sum_{j=1}^J \lambda_j'(t)\left( \Lambda W\right)_{[\lambda_j(t)]}\\
  \frac{\partial}{\partial t}\left( \frac{\partial U}{\partial t} \right)&=\Delta h +2Mh+2Mv_L+(h+v_L)^2+M^2+\Delta M.
  \end{aligned}\right.
 \end{equation}
We estimate $\lambda_j'(t)$, using the orthogonality condition \eqref{R17} and the first equation in \eqref{R130}:
\begin{lemma}[Derivative of the scaling parameters] One has
\label{L:derivative}
 \begin{equation} 
  \label{F200}
  \left|\lambda'_j+\alpha_j\right|\lesssim \gamma^{2}+o_n(1),
 \end{equation} 
 where as before $o_n(1)$ goes to $0$ as $n\to\infty$, uniformly with respect to $t\in [\tilde{t}_n,t_n]$. 
 \end{lemma}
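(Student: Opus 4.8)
The plan is to run the standard modulation argument: differentiate the orthogonality relation \eqref{R17} in time, combine it with the first line of the system \eqref{R130} to isolate $\lambda_j'(t)+\alpha_j(t)$ from the diagonal self-interactions, and then control the remaining terms by the bounds \eqref{est.g12} and \eqref{gamma_delta} already at our disposal. Using $\partial_\lambda\big[(\Lambda W)_{(\lambda)}\big]=-\lambda^{-1}(\Lambda^2W)_{(\lambda)}$, where $\Lambda^2W=\Lambda(\Lambda W)$ is smooth and $O(r^{-4})$ at infinity, hence in $\dot H^1$, differentiation of \eqref{R17} gives
\[
\int \nabla\partial_th\cdot\nabla(\Lambda W)_{(\lambda_j)}=\frac{\lambda_j'}{\lambda_j}\int\nabla h\cdot\nabla(\Lambda^2W)_{(\lambda_j)}=O\big(\lambda_j^{-1}\delta\big),
\]
while inserting $\partial_th=\partial_tU+\sum_{k=1}^J\lambda_k'(\Lambda W)_{[\lambda_k]}$ and using $(\Lambda W)_{[\lambda_j]}=\lambda_j^{-1}(\Lambda W)_{(\lambda_j)}$ rewrites the same quantity as
\[
\int\nabla\partial_tU\cdot\nabla(\Lambda W)_{(\lambda_j)}+\frac{\lambda_j'}{\lambda_j}\,\|\nabla\Lambda W\|_{L^2}^2+\sum_{k\neq j}\lambda_k'\,I_{jk},\qquad I_{jk}:=\int\nabla(\Lambda W)_{[\lambda_k]}\cdot\nabla(\Lambda W)_{(\lambda_j)}.
\]

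Next I would integrate by parts against the identity $-\Delta\Lambda W=\Phi=2W\Lambda W$: setting $\Phi_j:=-\Delta(\Lambda W)_{(\lambda_j)}$, i.e. $\Phi_j(x)=\lambda_j^{-4}\Phi(x/\lambda_j)\in L^2\cap L^\infty$ (as $\Phi=O(r^{-8})$), one has $\int\nabla\partial_tU\cdot\nabla(\Lambda W)_{(\lambda_j)}=\int\partial_tU\,\Phi_j$, and expanding $\partial_tU$ via \eqref{exp_dt}--\eqref{exp_dt_ortho} (and undoing the integration by parts on the off-diagonal pieces) this equals $\alpha_j\int(\Lambda W)_{[\lambda_j]}\Phi_j+\sum_{k\neq j}\alpha_k\,I_{jk}+\int g_1\,\Phi_j$. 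The point is that $\int(\Lambda W)_{[\lambda_j]}\Phi_j=\lambda_j^{-1}\int\Lambda W\,\Phi=\lambda_j^{-1}\|\nabla\Lambda W\|_{L^2}^2$, so the two diagonal self-pairings carry the \emph{same} coefficient $\|\nabla\Lambda W\|_{L^2}^2/\lambda_j$. Equating the two expressions for $\int\nabla\partial_th\cdot\nabla(\Lambda W)_{(\lambda_j)}$, multiplying by $\lambda_j/\|\nabla\Lambda W\|_{L^2}^2$ and rearranging yields
\[
\lambda_j'+\alpha_j=\lambda_j'\cdot O(\delta)-\frac{\lambda_j}{\|\nabla\Lambda W\|_{L^2}^2}\Big(\sum_{k\neq j}(\alpha_k+\lambda_k')\,I_{jk}+\int g_1\,\Phi_j\Big).
\]

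It then remains to estimate the error terms. Splitting the region of integration in $I_{jk}$ into $\{|x|\lesssim\min(\lambda_j,\lambda_k)\}$, $\{\min\le|x|\le\max\}$ and $\{|x|\gtrsim\max(\lambda_j,\lambda_k)\}$ and using $\Lambda W(0)\neq0$ and $\Lambda W=O(r^{-4})$ gives $\lambda_j|I_{jk}|\lesssim(\lambda_j/\lambda_k)^3$ for $k<j$ and $\lambda_j|I_{jk}|\lesssim\lambda_k/\lambda_j$ for $k>j$, hence $\lambda_j|I_{jk}|\lesssim\gamma$ throughout; and $\lambda_j\big|\int g_1\,\Phi_j\big|\le\|g_1\|_{L^2}\|\Phi\|_{L^2}\lesssim\gamma^2+o_n(1)$ by \eqref{est.g12}. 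Combined with $|\alpha_k|\lesssim\delta$, $\delta\approx\gamma+o_n(1)$ (from \eqref{gamma_delta}) and $|\lambda_k'|\le|\alpha_k|+|\lambda_k'+\alpha_k|\lesssim\delta+|\lambda_k'+\alpha_k|$, this gives, for each $j$,
\[
|\lambda_j'+\alpha_j|\le C\big(\gamma^2+o_n(1)\big)+C\delta\,|\lambda_j'+\alpha_j|+C\gamma\sum_{k\neq j}|\lambda_k'+\alpha_k|.
\]
Summing over $j\in\llbracket1,J\rrbracket$ and taking $\eps_0$ small and $n$ large enough that $C\delta+CJ\gamma\le\tfrac12$, the last two sums are absorbed into the left-hand side, so that $\sum_j|\lambda_j'+\alpha_j|\le C'(\gamma^2+o_n(1))$ with $C'$ depending only on $J$, which is \eqref{F200}.

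The step I expect to be delicate is precisely this final absorption: the $\lambda_k'$ appear in the error terms for $\lambda_j'$, so the estimates are a priori coupled and cannot be closed one index at a time; it is only the smallness of $\gamma$ and $\delta$ along $[\tilde t_n,t_n]$ (guaranteed by \eqref{gamma_delta} and the choice of $\eps_0$), together with summing over all $J$ indices, that decouples them. Everything else is bookkeeping: the algebraic identities ($\partial_\lambda$ of the rescaled profiles, $-\Delta\Lambda W=2W\Lambda W$, $\int\Lambda W\,\Phi=\|\nabla\Lambda W\|_{L^2}^2$), and the memberships $\Lambda W,\Lambda^2W\in\dot H^1$ and $\Phi\in L^1\cap L^2\cap L^\infty$ that make the pairings and integrations by parts legitimate, all follow from the explicit form of $W$ in dimension $6$, the sole external input being the bound \eqref{est.g12} on $\|g_1\|_{L^2}$.
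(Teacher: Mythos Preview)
Your proof is correct and follows essentially the same route as the paper's: differentiate the orthogonality \eqref{R17}, insert the first equation of \eqref{R130}, expand $\partial_tU$ via \eqref{exp_dt}, bound the off-diagonal pairings $I_{jk}$ and the $g_1$ term, and close by absorbing the coupled $\lambda_k'$ contributions using the smallness of $\gamma$. The only cosmetic difference is in the final absorption: the paper first deduces $|\lambdabf'|\lesssim\gamma+o_n(1)$ and substitutes back, whereas you sum over $j$ and absorb $\sum_j|\lambda_j'+\alpha_j|$ directly---both are equivalent (one small quibble: your displayed ``$=O(\lambda_j^{-1}\delta)$'' drops the factor $|\lambda_j'|$, which you correctly restore two lines later).
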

 \begin{proof}
 According to \eqref{R17},
 $$ \forall t\in I,\quad \int h(t)\frac{1}{\lambda_j^{3}}\left(\Delta \Lambda W\right)\left( \frac{x}{\lambda_j(t)} \right)\,dx=0.$$
 Differentiating with respect to $t$ and using the first equation in \eqref{R130}, we obtain
 \begin{multline*}
  0= \int \frac{\partial U}{\partial t}\frac{1}{\lambda_j^{3}}\left( \Delta \Lambda W \right)\left( \frac{x}{\lambda_j} \right)\,dx\\+\sum_{k=1}^J \lambda_k'\int \frac{1}{\lambda_k^{3}}(\Lambda W)\left( \frac{x}{\lambda_k} \right)\frac{1}{\lambda_j^{3}}(\Delta \Lambda W)\left( \frac{x}{\lambda_j} \right)\,dx
  \\
  -3\lambda_j'\int h \frac{1}{\lambda_j^{4}} \left( \Lambda_0\Delta \Lambda W\right)\left( \frac{x}{\lambda_j} \right)\,dx.
 \end{multline*}
 where $\Lambda_0=3+x\cdot \nabla$.

 The definition \eqref{exp_dt} of $g_1$ gives $\partial_tU=\sum_k \alpha_k(\Lambda W)_{[\lambda_k]}+g_1$. By the estimate \eqref{est.g12} on $g_1$, $|\alpha_j|\lesssim \delta \lesssim \gamma+o_n(1)$ for $1\leq j \leq J$ from \eqref{gamma_delta}, and the estimate
$$   \left|\int (\Lambda W)_{[\lambda_j]}\left(\Delta \Lambda W\right)_{[\lambda_k]}\right|\lesssim \gamma,\quad j\neq k,$$
that follows from direct computations, we obtain
\begin{equation*}
 \int \frac{\partial U}{\partial t} \frac{1}{\lambda_j^{3}} \left(\Delta \Lambda W\right)\left( \frac{x}{\lambda_j} \right)
 =-\alpha_j\|\Lambda W\|^2_{\hdot} +O\left( \gamma^{2}\right)+o_n(1).
\end{equation*}
Since $\|h\|_{\hdot}\lesssim \delta\lesssim \gamma+o_n(1)$ by the definition of $\delta$ and Lemma \ref{L:expansion}
$$\left|\lambda_j'\int h \frac{1}{\lambda_j^{4}} \left( \Lambda_0\Delta \Lambda W\right)\left( \frac{x}{\lambda_j} \right)\,dx\right|\lesssim |\lambda'_j|\|\nabla h\|_{L^2}
\lesssim \left(\gamma+o_n(1)\right)\,|\lambda_j'|.$$
Combining, we obtain
$$ \forall j,\quad \Big|\alpha_j\|\Lambda W\|^2_{\hdot}+\lambda'_j\|\Lambda W\|^2_{\hdot}\Big|\lesssim \gamma^{2} +\gamma \sum_{k} \left|\lambda_k'\right|+o_n(1),$$
and thus, letting $\alphabf=(\alpha_1,\ldots,\alpha_J)$,
$$\left| \lambdabf'+\alphabf\right|\lesssim |\lambdabf'|\gamma+\gamma^{2}+o_n(1).$$
This implies, recalling that $|\alpha|\lesssim \delta\lesssim \gamma+o_n(1)$,
$$ |\lambdabf'|\lesssim |\alphabf|+\gamma^{2}+o_n(1)\lesssim \gamma+o_n(1).$$
The desired estimate \eqref{F200} follows immediately from the two bounds above.
\end{proof}

\begin{lemma}[Second derivative of the scaling parameter] 
\label{L:second_derivative}
 For all $j\in \llbracket 2,J-1\rrbracket$,
 \begin{equation}
  \label{F270}
  \left|\lambda_j\beta'_j +\kappa_0\left(\left( \frac{\lambda_{j+1}}{\lambda_j} \right)^{2}-\left( \frac{\lambda_j}{\lambda_{j-1}}\right)^{2}\right)\right|\lesssim \gamma^{3}+o_n(1),
 \end{equation} 
where $\kappa_0>0$ is an absolute constant. Furthermore,
 \begin{equation}
  \label{F270'}
  \left|\lambda_J\beta'_J -\kappa_0 \left( \frac{\lambda_J}{\lambda_{J-1}}\right)^{2}\right|+\left|\lambda_1\beta'_1+\kappa_0 \left( \frac{\lambda_2}{\lambda_{1}}\right)^{2}\right|\lesssim \gamma^{3}+o_n(1),
 \end{equation} 
\end{lemma}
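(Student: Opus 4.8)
\textbf{Proof plan for Lemma \ref{L:second_derivative}.}
The plan is to differentiate the definition \eqref{R155} of $\beta_j$ and insert the system \eqref{R130}. Using $\partial_t(\Lambda W)_{[\lambda_j(t)]}=-\tfrac{\lambda_j'}{\lambda_j}(\Lambda_0\Lambda W)_{[\lambda_j]}$ with $\Lambda_0=3+x\cdot\nabla$ (the generator of the $L^2$-rescaling), one gets
$$
\beta_j'(t)=\frac{\lambda_j'}{\lambda_j}\int(\Lambda_0\Lambda W)_{[\lambda_j]}\,\partial_tU\,dx-\int(\Lambda W)_{[\lambda_j]}\,\partial_t^2U\,dx .
$$
For the first term, expand $\partial_tU=\sum_k\alpha_k(\Lambda W)_{[\lambda_k]}+g_1$ as in \eqref{exp_dt}: the diagonal contribution $k=j$ vanishes because $\int(\Lambda_0\Lambda W)\,\Lambda W=0$ (integration by parts in $\RR^6$), the off-diagonal ones obey the overlap bound $\bigl|\int(\Lambda_0\Lambda W)_{[\lambda_j]}(\Lambda W)_{[\lambda_k]}\bigr|\lesssim\gamma$ for $k\ne j$, and $\bigl|\int(\Lambda_0\Lambda W)_{[\lambda_j]}g_1\bigr|\lesssim\|g_1\|_{L^2}$. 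Since $|\alpha_k|\lesssim\delta$, $|\lambda_j'|\lesssim\gamma+o_n(1)$ (Lemma \ref{L:derivative}), $\delta\approx\gamma+o_n(1)$ by \eqref{gamma_delta}, and $\|g_1\|_{L^2}\lesssim\gamma^2+o_n(1)$ by \eqref{est.g12}, this whole term contributes $O(\gamma^3)+o_n(1)$ to $\lambda_j\beta_j'$.

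For the second term I substitute the second line of \eqref{R130}. Using $\Delta W_{(\lambda_k)}=-W_{(\lambda_k)}^2$, one has $M^2+\Delta M=\sum_{k\ne l}W_{(\lambda_k)}W_{(\lambda_l)}$, so
$$
-\int(\Lambda W)_{[\lambda_j]}\partial_t^2U=-\int(\Lambda W)_{[\lambda_j]}\bigl(\Delta h+2W_{(\lambda_j)}h\bigr)-2\sum_{k\ne j}\int(\Lambda W)_{[\lambda_j]}W_{(\lambda_k)}h-\sum_{k\ne l}\int(\Lambda W)_{[\lambda_j]}W_{(\lambda_k)}W_{(\lambda_l)}-\int(\Lambda W)_{[\lambda_j]}\bigl(2Mv_L+(h+v_L)^2\bigr).
$$
The first integral vanishes identically: $\Lambda W$ lies in the kernel of the linearized operator $-\Delta-2W$, so by scaling $\Delta(\Lambda W)_{[\lambda_j]}+2W_{(\lambda_j)}(\Lambda W)_{[\lambda_j]}=0$, and an integration by parts (legitimate since $\Lambda W\in\dot{H}^1(\RR^6)$ and $h$ decays) gives $\int(\Lambda W)_{[\lambda_j]}(\Delta h+2W_{(\lambda_j)}h)=0$. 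This is the key algebraic cancellation of the resonant linear term.

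The remaining three pieces on the right are error terms of size $O(\gamma^3)+o_n(1)$ after multiplication by $\lambda_j$. The cross term $-2\sum_{k\ne j}\int(\Lambda W)_{[\lambda_j]}W_{(\lambda_k)}h$ and the $h^2$-part of the last bracket are estimated by pairing $h$ against the (weighted, single-scale concentrated) profiles and summing over dyadic shells, using the sharp bound $\|h\|_{Z_{-2,\lambdabf}}\lesssim\gamma^2|\log\gamma|+o_n(1)$ of \eqref{est.g12}; because $h$ is paired against objects concentrated at one scale, one gains an extra power of $\gamma$ at the cost of only a power of $|\log\gamma|$, yielding a bound $O(\gamma^4|\log\gamma|^2)+o_n(1)=O(\gamma^3)+o_n(1)$. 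The terms carrying a factor $v_L$ are $o_n(1)$: along $t=s_n\to\infty$, $v_L$ (a fixed finite-energy free wave) disperses out of any sub-cone $\{|x|\le\theta t\}$, while $M$ and $(\Lambda W)_{[\lambda_j]}$ are concentrated at scales $\lesssim\lambda_1(s_n)\ll s_n$, so after localizing one piece inside and the complementary tail of the soliton weights outside the cone, all $v_L$ contributions vanish in the limit, exactly as in the odd-dimensional argument of \cite{DuKeMe19Pb}.

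It then remains to analyze the genuine interaction sum $-\lambda_j\sum_{k\ne l}\int(\Lambda W)_{[\lambda_j]}W_{(\lambda_k)}W_{(\lambda_l)}$. Using $W(x)\approx1$ near $0$ and $W(x)\approx c_W|x|^{-4}$ at infinity, every pair $(k,l)$ other than $(j,j-1),(j-1,j),(j,j+1),(j+1,j)$ carries at least two scale-separation factors and contributes $O(\gamma^3)$; the four nearest-neighbor pairs are evaluated by the explicit computation of Lemma~5.5 in \cite{DuKeMe19Pb} (see also the appendix), the pair involving $\lambda_{j-1}$ giving $+\kappa_0(\lambda_j/\lambda_{j-1})^2$ and the pair involving $\lambda_{j+1}$ giving $-\kappa_0(\lambda_{j+1}/\lambda_j)^2$, with the same absolute constant $\kappa_0>0$ by scaling. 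With the convention $\lambda_0=\infty$, $\lambda_{J+1}=0$ (which simply deletes a missing neighbor), this produces \eqref{F270} for $2\le j\le J-1$ and \eqref{F270'} for $j=1$ and $j=J$ simultaneously. I expect the main obstacle to be this last verification combined with the error analysis above: the algebraic cancellation of the resonant term is robust, but one must check that the logarithmic losses intrinsic to the six-dimensional channel-of-energy estimates (which make $\|h\|_{Z_{-2,\lambdabf}}$ only $O(\gamma^2|\log\gamma|)$ rather than $O(\gamma^2)$) are genuinely absorbed into the $O(\gamma^3)$ error, and that the $v_L$ contributions are controlled quantitatively via the dispersion of $v_L$ inside the light cone together with the concentration of the soliton profiles at scales $\ll t$.
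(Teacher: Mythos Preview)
Your proof is correct and follows essentially the same approach as the paper's: differentiate \eqref{R155}, kill the first term via the vanishing of $\int(\Lambda_0\Lambda W)\Lambda W$ together with \eqref{est.g12} and \eqref{F200}, cancel the linearized piece using $L_{W_{(\lambda_j)}}(\Lambda W)_{(\lambda_j)}=0$, bound the $h$-error terms by the $Z_{-2,\lambdabf}$ estimate \eqref{est.g12} (giving $O(\gamma^4|\log\gamma|^2)$), and extract the nearest-neighbor interaction constants from the computation in \cite{DuKeMe19Pb}. The only cosmetic difference is that the paper disposes of all $v_L$-terms in one line via $\|v_L(t)\|_{L^3}\to 0$ and H\"older, which is somewhat quicker than your cone-splitting argument.
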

\begin{proof}
Differentiating the definition \eqref{R155} of $\beta_j$, we obtain 
\begin{equation}
\label{lambdabeta}
\lambda_j\beta_j'(t)=\lambda_j' \int \left(\Lambda_0\Lambda W\right)_{[\lambda_j]}\partial_tU-\lambda_j \int \left( \Lambda W \right)_{[\lambda_j]}\partial_t^2U
\end{equation}
We first prove that the first term of the right-hand side is negligible. Using the expansion \eqref{exp_dt} of $\partial_tU$, we obtain
\begin{multline*}
\int \left(\Lambda_0\Lambda W\right)_{[\lambda_j]}\partial_tU
=\int (\Lambda_0\Lambda W)_{[\lambda_j]} g_1\\+\alpha_j \underbrace{\int (\Lambda_0\Lambda W)_{[\lambda_j]}\left( \Lambda W \right)_{[\lambda_j]}}_{=0}+\sum_{k\neq j} \int \alpha_k(\Lambda_0\Lambda W)_{[\lambda_j]} (\Lambda W)_{[\lambda_k]}.
\end{multline*}
Hence, by \eqref{est.g12}, \eqref{F200}, $|\alpha_j|\lesssim \delta\lesssim \gamma+o_n(1)$ and since by direct computations
$\left|\int (\Lambda_0\Lambda W)_{[\lambda_j]} (\Lambda W)_{[\lambda_k]}\right|\lesssim \gamma$ for $k\neq j$, we obtain
\begin{equation}
 \label{F211} \left|\lambda_j'\int (\Lambda_0\Lambda W)_{[\lambda_j]}\partial_tU\right|\lesssim \gamma^{3}+o_n(1).
\end{equation} 
We next investigate the first term of the right-hand side of \eqref{lambdabeta}.
By the second equation in \eqref{R130}, we have
\begin{align}
\label{main_beta'1}
 \lambda_j\int \left( \Lambda W\right)_{[\lambda_j]}\partial_t^2U
=&-\int\left( \Lambda W \right)_{(\lambda_j)}L_{W_{(\lambda_j)}}h\\
\label{main_beta'2}
&+2\int \sum_{j\neq k} (\Lambda W)_{(\lambda_j)} W_{(\lambda_k)}h+\int (\Lambda W)_{(\lambda_j)}h^2\\
\label{main_beta'3}
&+\int (\Lambda W)_{(\lambda_j)} (2Mv_L+v_L^2+2v_Lh)\\
\label{main_beta'4}
&+ \int(\Lambda W)_{(\lambda_j)}(M^2+\Delta M).
 \end{align}
 where $L_{W_{(\lambda_j)}}=-\Delta-2 W_{(\lambda_j)}$.

 The term \eqref{main_beta'4} is estimated in \cite{DuKeMe19Pb}. Indeed,
 $$ \int (\Lambda W)_{(\lambda_j)}(M^2+\Delta M)=2\sum_{k<\ell} \int (\Lambda W)_{(\lambda_j)} W_{(\lambda_k)}W_{(\lambda_{\ell})}.$$
By direct computation (see \cite[(5.66) and (5.67)]{DuKeMe19Pb} where the computation is performed in any dimension), for some explicit constant $\kappa_0>0$:
$$2\int (\Lambda W)_{(\lambda_j)} W_{(\lambda_j)} W_{(\lambda_{j+1})}=\kappa_0\left( \frac{\lambda_{j+1}}{\lambda_{j}}\right)^2 +\OOO(\gamma^3)$$
and
$$2\int (\Lambda W)_{(\lambda_j)} W_{(\lambda_j)} W_{(\lambda_{j-1})}=-\kappa_0\left( \frac{\lambda_{j}}{\lambda_{j-1}}\right)^2 +\OOO(\gamma^3)$$
Also, if $k<\ell$ and $(k,\ell)\notin \{ (j,j+1),(j-1,j)\}$, we have
$$ \int (\Lambda W)_{(\lambda_j)}W_{(\lambda_k)}W_{(\lambda_{\ell})}=\OOO(\gamma^3).$$
We next prove that the other terms \eqref{main_beta'1},\eqref{main_beta'2} and \eqref{main_beta'3} are at most of order $\OOO(\gamma^{4}|\log \gamma|^2)+o_n(1)$, which will conclude the proof.

We have $\int (\Lambda W)_{(\lambda_j)}L_{W_{(\lambda_j)}}h=\int L_{W_{(\lambda_j)}}\big((\Lambda W)_{(\lambda_j)}\big)h=0$. By \eqref{En30}, \eqref{En31} and $\delta\lesssim \gamma+o_n(1)$ using \eqref{gamma_delta}:
$$\int (\Lambda W)_{(\lambda_j)}h^2=\OOO\left(\|h\|^2_{Z_{-2,\lambdabf}}\right)=\OOO(\gamma^4|\log \gamma|^2)+o_n(1)$$
and
$$\int (\Lambda W)_{(\lambda_j)}W_{(\lambda_k)}h=\OOO\left(\gamma^{4}|\log \gamma|^2\right)+o_n(1).$$
Thus \eqref{main_beta'2} is bounded, up to a constant, by $\gamma^{4}|\log \gamma|^2+o_n(1)$.

Finally, since $\lim_{t\to+\infty}\int |v_L(t)|^3=0$, we have $\eqref{main_beta'3}=o_n(1)$. This concludes the proof.
\end{proof}

\subsection{Restriction on the set of indices and end of the proof}
\label{subsection:solitonresolutionendproof}

We will next restrict the set of indices $\llbracket 1,J\rrbracket$ and the time interval $[\tilde{t}_n,t_n]$ so that estimates similar to \eqref{smallness_betaj}, \eqref{smallness_lambdaj'} and \eqref{derivative_betaj} hold without $o_n(1)$, and a lower bound of the exterior scaling parameter holds on the smaller time interval.

Recall that $J\geq 2$ by energy considerations (see Lemma \ref{L:energy}). Extracting subsequences if necessary, we let
$$ \left( \tU^j_0,\tU^j_1\right)=\wlim_{n\to\infty} \Big( \lambda_j^2(\tilde{t}_n)U\left( \tilde{t}_n,\lambda_j(\tilde{t}_n)\cdot \right), \lambda_j^3(\tilde{t}_n)U\left( \tilde{t}_n,\lambda_j(\tilde{t}_n)\cdot \right)\Big),$$
where $U=u-v_L$ as above, and the weak limit is in $\HHH$. We first note that for $1\leq j\leq J-1$,
\begin{equation}
 \label{limite_j}
\tilde{U}_0^j=W\Longrightarrow \lim_{n\to\infty} \frac{\lambda_{j+1}(\tilde{t}_n)}{\lambda_{j}(\tilde{t}_n)}=0.
\end{equation} 
Indeed, if (after extraction) $\lim_n \frac{\lambda_{j+1}(\tilde{t}_n)}{\lambda_j(\tilde{t}_n)}=\lambda_{\infty}>0$, then we see that $0<\lambda_\infty\lesssim \epsilon_0$ using \eqref{gamma_delta}. Then, writing $f_{\{\mu\}}(r)=\mu^{-4}f(\frac r\mu)$, by direct estimates we find
\begin{multline*}
\int W_{\{\lambda_\infty\}} \lambda_j^2(\tilde{t}_n)U( \tilde{t}_n,\lambda_j(\tilde{t}_n) 
\\= \int W_{\{\lambda_\infty\}} \left(W_{(\frac{\lambda_{j+1}(\tilde t_n)}{\lambda_j(\tilde t_n)})}+\sum_{i\neq j+1} W_{(\frac{\lambda_i(\tilde t_n)}{\lambda_j(\tilde t_n)})}+\lambda_j^2(\tilde{t}_n)h( \tilde{t}_n,\lambda_j(\tilde{t}_n)) \right)\\
\qquad = \int W^2 (1+o_n(1))+O(\gamma^2+\delta) \ \rightarrow  \int W^2+O(\gamma^2+\delta)
\end{multline*}
while $\int W_{\{\lambda_\infty\}} W=O(\lambda_\infty^2)=O(\epsilon^2_0)$, a contradiction, and \eqref{limite_j} follows.

Note that $\gamma(\tilde{t}_n)$ does not go to $0$ as $n$ goes to infinity, as this would imply $\lim_n\delta(\tilde{t}_n)=0$ by \eqref{gamma_delta}, a contradiction with the definition of $\tilde{t}_n$. By this and \eqref{limite_j}, there exists $\tJ\in \llbracket 1, J\rrbracket$ such that 
$$\forall j\in  \llbracket 1,\ldots,\tJ-1 \rrbracket, \quad (\tU^j_0,\tU^j_1)=(W,0),\quad 
(\tU^{\tJ}_0,\tU^{\tJ}_1)\neq (W,0).
$$
Furthermore, also by \eqref{limite_j}, one obtains $
\lim_{n\to\infty} \frac{\lambda_{j+1}(\tilde{t}_n)}{\lambda_{j}(\tilde{t}_n)}=0$ for $j\in \llbracket 1,\tJ-1\rrbracket$. In particular, we cannot have $\tJ=J$ (which would imply $\lim_n \gamma(\tilde{t}_n)=0$).

Using Lemma \ref{L:expansion}, we see that $\tU^{\tJ}$ is a $(0,0)$ non-radiative solution, and also that it cannot be of the form $\mu^2W(\mu\cdot)$ for some $\mu >0$. Thus by Proposition \ref{P:nonradiative}, there exists $\ell\neq 0$ such that if $R_0$ is chosen such that $\|(\tU_0^{\tJ},\tU_1^{\tJ})\|_{\HHH(R_0)}\leq \eps$, for some small $\eps>0$, then 
\begin{equation}
 \label{Re20}
 \forall R\geq R_0,\quad 
 \left\|\partial_t U^{\tJ}(t,r)-\frac{\ell}{r^4}\right\|_{L^2_R}\leq C\eps^2\left( \frac{R_0}{R} \right)^{5/4},
\end{equation} 
We let 
$$\tilde{\gamma}(t)=\sup_{\tJ\leq j\leq J-1} \frac{\lambda_{j+1}(t)}{\lambda_{j}(t)},\quad t_n'=\tilde{t}_n+T\lambda_{\tJ}(\tilde{t}_n),$$
where $T>0$ will be specified later. 
Then:
\begin{proposition}
\label{P:contra}
For $\eps_0$ small enough (independently of $T$), for any $T>0$, for $n$ large enough one has $t_n'< t_n$.
Moreover, for all $t\in [\tilde{t}_n,t_n']$, and all $j\in \llbracket \tJ,J\rrbracket$
 \begin{gather}
   \label{Re50}
  |\beta_j^2(t)|\lesssim \tilde \gamma^2(t) \\
  \label{Re21}
  \left|\lambda_j'(t)-\kappa_2\beta_j(t)\right|\leq C\tgamma^{2}(t)\\
  \label{Re22}
  \left|\lambda_j(t)\beta'_j(t)-\kappa_0\left(\left( \frac{\lambda_{j+1}(t)}{\lambda_j(t)}\right)^2-\left( \frac{\lambda_j(t)}{\lambda_{j-1}(t)} \right)^2\right)\right|\leq C\tgamma^{3}(t),\\
  \label{Re30}
  |\ell|\leq C\left( \frac{\lambda_{\tilde{J}}(t)}{\lambda_{\tJ}(\tilde{t}_n)} \right)\sqrt{\gamma(t)},
 \end{gather}
 where $\kappa_2=\| \lambda W \|_{L^2}^{-2}$.
\end{proposition}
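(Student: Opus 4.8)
The plan is to run a renormalization/compactness argument around the $\tJ$-th scale, combining Proposition~\ref{P:modulation} (valid on $[\tilde t_n,t_n]$) with the nonzero charge $\ell$ supplied by Proposition~\ref{P:nonradiative} and the lower bound of Proposition~\ref{P:lower_bound}. Fix the cluster decomposition of Lemma~\ref{L:expansion} at $s_n=\tilde t_n$: by \eqref{limite_j} one has $\lambda_{j+1}(\tilde t_n)/\lambda_j(\tilde t_n)\to0$ for $1\le j\le\tJ-1$, so the solitons $1,\dots,\tJ-1$ are asymptotically isolated and the indices $\tJ,\dots,J$ form clusters $k_0<k_0+1<\dots<K$ with $j_{k_0}=\tJ$; the corresponding weak limits $V^{k_0},\dots,V^K$ are $(0,0)$ non-radiative solutions close to a multisoliton, and $V^{k_0}=\tU^{\tJ}$ is not stationary, so Proposition~\ref{P:nonradiative} attaches to it a charge $\ell\neq0$. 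Note also that $\gamma(\tilde t_n)=\tgamma(\tilde t_n)+o_n(1)$ does not tend to $0$.

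A key ingredient is a convergence statement: for each $k\ge k_0$, writing $\sigma=(t-\tilde t_n)/\lambda_{j_k}(\tilde t_n)$, the rescaled solution $\lambda_{j_k}^2(\tilde t_n)U(\tilde t_n+\lambda_{j_k}(\tilde t_n)\sigma,\lambda_{j_k}(\tilde t_n)\cdot)$ converges, by \eqref{R60bis} with $s_n=\tilde t_n$ and finite speed of propagation, to $V^k(\sigma)$ locally uniformly in $\sigma$, and likewise the scaling parameters $\lambda_j(t)/\lambda_{j_k}(\tilde t_n)$ ($j$ in cluster $k$) and the $\beta_j(t)$ converge to those of $V^k$. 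Since $V^k$ carries no radiation term and is an exact solution, the analogues of Lemmas~\ref{L:estimateg1}, \ref{L:derivative} and~\ref{L:second_derivative} hold for it with no $o_n(1)$; exploiting the repulsive sign of the leading interaction $\kappa_0\big((\lambda_{j+1}/\lambda_j)^2-(\lambda_j/\lambda_{j-1})^2\big)$ in \eqref{F270}--\eqref{F270'} together with the conserved energy relation behind Lemma~\ref{L:energy}, one shows that $V^k$ stays close to a multisoliton for all $\sigma\ge0$, with $\eps_0$ small independently of $T$, its top scaling parameter $\Lambda^k_1(\sigma)$ being a positive continuous function; in particular $\Lambda^{k_0}_1$ is bounded on $[0,T]$.

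Granting this, $t_n'<t_n$ follows by contradiction: if $t_n\le t_n'$ along a subsequence then $t_n-\tilde t_n\le T\lambda_{\tJ}(\tilde t_n)$, and Proposition~\ref{P:modulation} on $[\tilde t_n,t_n]$ together with $\gamma\lesssim\eps_0+o_n(1)$ (Lemma~\ref{L:energy}, \eqref{R18}) give $|\lambda_{\tJ}'|\lesssim\eps_0+o_n(1)$, hence $\lambda_{\tJ}(t_n)/\lambda_{\tJ}(\tilde t_n)$ is bounded; on the other hand Proposition~\ref{P:lower_bound} applied to $V^{k_0}$ at rescaled time $\sigma_n\le T$, transferred by the convergence above, gives $|\ell|\le C\big(\lambda_{\tJ}(t_n)/\lambda_{\tJ}(\tilde t_n)\big)\sqrt{\gamma(t_n)}+o_n(1)$, which tends to $0$ since $\gamma(t_n)\to0$ (as $d_J(\vec u(t_n)-\vec v_L(t_n))\to0$) — contradicting $\ell\neq0$. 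On the now valid interval $[\tilde t_n,t_n']\subset[\tilde t_n,t_n]$ one first notes $\gamma(t)=\tgamma(t)+o_n(1)$ (the ratios $\lambda_j/\lambda_{j-1}$, $2\le j\le\tJ$, are $o_n(1)$ at $\tilde t_n$ and stay so, the large solitons varying on a scale $\gg\lambda_{\tJ}(\tilde t_n)\gtrsim(t-\tilde t_n)/T$), so Proposition~\ref{P:modulation} yields \eqref{Re50}--\eqref{Re22} with $\tgamma$ up to $o_n(1)$; the $o_n(1)$ is removed by transferring the clean modulation estimates for $V^{k_0},\dots,V^K$ back to $U$ via the convergence (for the lower clusters the rescaled time range $[0,T\lambda_{\tJ}(\tilde t_n)/\lambda_{j_k}(\tilde t_n)]$ grows with $n$, which is absorbed by a diagonal argument using the uniform-in-time bounds). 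Finally \eqref{Re30} is Proposition~\ref{P:lower_bound} for $V^{k_0}$ at each $\sigma\in[0,T]$, $|\ell|\le C_0\Lambda^{k_0}_1(\sigma)\sqrt{\Gamma^{k_0}(\sigma)}$, transferred using $\Lambda^{k_0}_1(\sigma)=\lambda_{\tJ}(t)/\lambda_{\tJ}(\tilde t_n)+o_n(1)$ and $\Gamma^{k_0}(\sigma)\le\gamma(t)+o_n(1)$, where $\Gamma^{k_0}$ is the scale separation of $V^{k_0}$.

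The hard part is the claim of the second paragraph — that the non-radiative cluster solutions remain close to a multisoliton for all rescaled times with constants independent of $T$, through the near-Toda, repulsive structure of the modulation system — together with the uniform-in-$n$ transfer back to $U$ over rescaled time intervals whose length blows up with $n$ for the lower clusters. Once these are in place, the passage to $t_n'<t_n$ and to the $o_n(1)$-free estimates is a routine compactness argument.
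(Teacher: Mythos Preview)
Your overall architecture is recognizable, but you have replaced a short lower-bound argument by a much harder claim that is neither proved nor needed. The paper does \emph{not} show that the cluster solutions $V^k$ remain close to a multisoliton for all rescaled times $\sigma\ge 0$, nor does it transfer clean modulation estimates from $V^k$ back to $U$. Instead, the key step (Lemma~\ref{L:Re40}, estimate \eqref{Re41}) is the much more modest fact that
\[
\liminf_{n\to\infty}\ \min_{\tilde t_n\le t\le t_n'}\tgamma(t)>0.
\]
This is obtained as follows. First, Lemma~\ref{L:Re30} shows that if $\gamma(\tau_n)\to 0$ for some $\tau_n\in[\tilde t_n,t_n]$, then $(\tau_n-\tilde t_n)/\lambda_{\tJ}(\tilde t_n)\to\infty$; the proof uses only the expansion \eqref{R60bis} at the \emph{bounded} rescaled time $\sigma=(\tau_n-\tilde t_n)/\lambda_{\tJ}(\tilde t_n)$ together with \eqref{Re20} to see that $\|\partial_t U(\tau_n)\|_{L^2}$ stays bounded below, contradicting $\delta(\tau_n)\to 0$. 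Applying this with $\tau_n=t_n$ gives $t_n'<t_n$. Second, integrating the $\beta_j'$ and $\lambda_j'$ estimates from Proposition~\ref{P:modulation} over the short interval $[\tilde t_n,t_n']$ (of length $T\lambda_{\tJ}(\tilde t_n)\ll\lambda_j(\tilde t_n)$ for $j<\tJ$) yields \eqref{Re40}: the ratios $\lambda_{j+1}/\lambda_j$ with $j<\tJ$ stay $o_n(1)$. Combining this with Lemma~\ref{L:Re30} gives \eqref{Re41}: were $\tgamma(\tau_n)\to 0$ along some $\tau_n\in[\tilde t_n,t_n']$, then $\gamma(\tau_n)\to 0$ too, forcing $(\tau_n-\tilde t_n)/\lambda_{\tJ}(\tilde t_n)\to\infty$, impossible since $\tau_n\le t_n'$.

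Once \eqref{Re41} is in hand, the $o_n(1)$ terms in \eqref{smallness_betaj}--\eqref{derivative_betaj} are bounded, for $n$ large and $t\in[\tilde t_n,t_n']$, by any fixed power of $\tgamma(t)$ simply because $\tgamma(t)\ge c>0$; and \eqref{Re40} gives $\gamma(t)\le 2\tgamma(t)$. This immediately yields \eqref{Re50}--\eqref{Re22}. The bound \eqref{Re30} is Lemma~\ref{L:Re50}, proved by applying Proposition~\ref{P:lower_bound} to the cluster containing $\tJ$ via \eqref{R60bis} at $\tau=t-\tilde t_n$. Your ``hard part'' --- global-in-$\sigma$ control of $V^k$ and a diagonal transfer over unbounded rescaled intervals for the lower clusters --- is therefore a genuine overcomplication; it is not established, and the proof does not require it.
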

In Appendix \ref{A:SDE}, we will prove, following \cite{DuKeMe19Pb}, that there is a constant $T^*$ (depending on $\ell$) such that if \eqref{Re21}, \eqref{Re22} and \eqref{Re30} hold on $[\tilde{t}_n,t_n']$, then $t_n'-\tilde{t}_n\leq T^*\lambda_{\tJ}(\tilde{t}_n)$. Thus Proposition \ref{P:contra} yields a contradiction if the parameter $T$ in the definition of $t_n'$ is chosen larger than $T^*$.

To conclude the proof of Theorem \ref{T:main}, we are left with proving Proposition \ref{P:contra}. The proof is the same as the proof of the corresponding result in \cite{DuKeMe19Pb} (see Subsections 6.2 and 6.3 there) and we only sketch it. We divide it into a few Lemmas.
\begin{lemma}
 \label{L:Re30}
 Let $\tau_n\in [\tilde{t}_n,t_n]$ be such that 
 \begin{equation}
  \label{Re31}
  \lim_{n\to\infty}\gamma(\tau_n)=0.
 \end{equation} 
 Then
 $$\lim_{n\to\infty} \frac{\tau_n-\tilde{t}_n}{\lambda_{\tJ}(\tilde{t}_n)}=+\infty.$$
\end{lemma}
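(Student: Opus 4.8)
\emph{Proof idea.} The plan is to argue by contradiction. If the conclusion fails, there is $M>0$ and a subsequence (not relabeled) along which $s_n:=(\tau_n-\tilde t_n)/\sigma_n\le M$, where $\sigma_n:=\lambda_{\tJ}(\tilde t_n)$ is the reference scale. The contradiction will come from comparing two descriptions of $\partial_t U(\tau_n)$: since $\gamma(\tau_n)\to0$, \eqref{gamma_delta} gives $\delta(\tau_n)\to0$, hence $\|\partial_t U(\tau_n)\|_{L^2}\le\delta(\tau_n)\to0$; but the non-radiative profile $\tU^{\tJ}$, which sits at scale $\sigma_n$ and carries the genuinely nonzero velocity tail $\ell/r^4$ with $\ell\neq0$ (this is the $\ell$ of Proposition \ref{P:nonradiative}, already used to derive \eqref{Re20}), must contribute a definite amount to $\partial_t U(\tau_n)$ in the far region $\{|x|>A\sigma_n\}$ for a large fixed $A$, since only a bounded amount $s_n\le M$ of rescaled time has elapsed between $\tilde t_n$ and $\tau_n$.

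To implement this I would invoke Lemma \ref{L:expansion} with the sequence $\tilde t_n$, giving the block decomposition $\llbracket 1,J\rrbracket=\cup_k\llbracket j_k,j_{k+1}-1\rrbracket$, the profiles $V^k$, and the expansion \eqref{R60bis}. By the analysis of Subsection \ref{subsection:solitonresolutionendproof}, each index $j<\tJ$ is a single-index block whose profile is the static ground state $W$ (because $\tU^j=(W,0)$, so $\partial_t V^k\equiv0$ there); the index $\tJ$ is the leading index $j_{k_0}$ of the block with profile $V^{k_0}=\tU^{\tJ}$; and every block $k>k_0$ sits at scale $\lambda_{j_k}(\tilde t_n)\ll\sigma_n$. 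Differentiating \eqref{R60bis} in time at $\tau=\tau_n-\tilde t_n$ (valid on $\{|x|>|\tau|\}$) and restricting to $\{|x|>A\sigma_n\}$ with $A>M$ fixed — which is then contained in all the relevant light cones — the static coarse blocks drop out, $\|\partial_t r_n\|_{L^2(|x|>A\sigma_n)}=o_n(1)$ by \eqref{R60bis}, and for each $k>k_0$, after the change of variables $y=x/\lambda_{j_k}(\tilde t_n)$ and the far-field expansion of Proposition \ref{P:nonradiative} applied to $V^k$ (namely $\|\partial_t V^k(t)-\ell_k/r^4\|_{L^2_\rho}\lesssim\rho^{-3/2}$ uniformly in $t$, hence $\|\partial_t V^k(t)\|_{L^2_\rho}\lesssim\rho^{-1}$ for large $\rho$; or simply $\partial_t V^k\equiv0$ when $V^k$ is a soliton), one gets $\|\partial_t V^k_n(\tau_n-\tilde t_n)\|_{L^2(|x|>A\sigma_n)}\lesssim\lambda_{j_k}(\tilde t_n)/(A\sigma_n)=o_n(1)$. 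Only the $\tU^{\tJ}$-term survives, and by rescaling
\[
\|\partial_t U(\tau_n)\|_{L^2(|x|>A\sigma_n)}=\|\partial_t\tU^{\tJ}(s_n)\|_{L^2(|y|>A)}+o_n(1).
\]

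Finally, \eqref{Re20} holds uniformly in time (it applies for $A\ge R_0$ and $A>|s_n|$, which holds since $A>M\ge s_n$), so $\|\partial_t\tU^{\tJ}(s_n)\|_{L^2(|y|>A)}\ge\|\ell/r^4\|_{L^2_A}-C\eps^2(R_0/A)^{5/4}=\frac{|\ell|}{\sqrt2\,A}-C\eps^2(R_0/A)^{5/4}\ge\frac{|\ell|}{2\sqrt2\,A}>0$ once $A$ is fixed large enough (independently of $n$). Together with $\|\partial_t U(\tau_n)\|_{L^2}\le\delta(\tau_n)\to0$ this is a contradiction, so $(\tau_n-\tilde t_n)/\sigma_n\to+\infty$.

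I expect the main difficulty to be the multiscale bookkeeping in the middle step: one must check that, uniformly in $n$, every profile other than $\tU^{\tJ}$ — the coarser static ground states, the finitely many finer non-radiative profiles, and the dispersive remainder $r_n$ — is genuinely negligible in $\{|x|>A\sigma_n\}$. This is exactly where the quantitative $r^{-4}$ far-field expansion of Proposition \ref{P:nonradiative}, applied to each finer profile, and the choice $A>M$ (keeping $\{|x|>A\sigma_n\}$ inside all light cones, so that \eqref{R60bis} and the domain of definition of $\tU^{\tJ}$ are respected) are used. The remaining ingredients — extraction of subsequences, the change of variables, and reading off $\ell\neq0$ together with \eqref{Re20} from Proposition \ref{P:nonradiative} — are routine.
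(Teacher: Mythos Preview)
Your proof is correct and follows essentially the same contradiction strategy as the paper: both use the expansion \eqref{R60bis} at the sequence $\tilde t_n$, identify the nontrivial profile $\tU^{\tJ}$ at scale $\lambda_{\tJ}(\tilde t_n)$, and derive a contradiction from $\delta(\tau_n)\to 0$ together with the nonvanishing of $\partial_t\tU^{\tJ}$ guaranteed by \eqref{Re20}. The only minor differences are that the paper extracts a limit $\sigma=\lim_n s_n$ and works on $\{|x|>\tau_n-\tilde t_n\}$, whereas you keep $s_n\le M$ and restrict further to $\{|x|>A\sigma_n\}$ with $A>M$; your explicit disposal of the finer-scale profiles via the $r^{-4}$ tail from Proposition~\ref{P:nonradiative} is a clean way to fill in a detail the paper's sketch leaves to \cite{DuKeMe19Pb} (just note that the bound \eqref{nonradiative:quantitativebound'} is on $L^2_{R+|t|}$ rather than $L^2_R$, so ``uniformly in $t$'' should be read as ``for $\rho-|t|\ge (A-M)\sigma_n/\lambda_{j_k}(\tilde t_n)\to\infty$'', which is exactly what your choice $A>M$ ensures).
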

Note that Lemma \ref{L:Re30} and the definitions of $t_n$ and $t_n'$ imply that $t_n'<t_n$ for large $n$.
\begin{proof}[Sketch of proof]
 We refer the reader to the proof of Lemma 6.3 and to Remark 6.5 in \cite{DuKeMe19Pb} for a detailed proof.  
 
 We argue by contradiction, assuming (after extraction) 
 \begin{equation}
  \label{Re32}
  \lim_{n\to\infty} \frac{\tau_n-\tilde{t}_n}{\lambda_{\tJ}(\tilde{t}_n)}=\sigma \in [0,\infty).
 \end{equation} 
 We use Lemma \ref{L:expansion} with $s_n=\tilde{t}_n$. With the notations of this Lemma, we see that for $k\in \llbracket 1,\tJ-1\rrbracket$, $V^k=W$ and that $V^{\tJ}=U^{\tJ}$. By the expansion \eqref{R60bis} and its time derivative at $\tau=\tau_n-\tilde{t}_n$, we obtain
 $$\vec{U}(\tau_n)=\sum_{k=1}^K \vec{V}_n^k(\tau_n-\tilde{t}_n)+\vec{r}_n(\tau_n),$$
 where $\lim_{n\to\infty} \left\|\vec{r}_n(\tau_n)\right\|_{\HHH(\{r>\tau_n-\tilde{t}_n\})}=0$. By \eqref{Re32}, 
 $$\partial_tV_n^{\tJ}(\tau_n-\tilde{t}_n,r)=\frac{1}{\lambda_{\tJ}^3(\tilde{t}_n)}\partial_tU^{\tJ}\left( \sigma,\frac{r}{\lambda_{\tJ}(\tilde{t}_n)} \right)+o_n(1)\text{ in }L^2\left( \{r>\tau_n-\tilde{t}_n\} \right).$$
 Since by \eqref{Re20},
 $$\lim_{n\to\infty}\int_{|x|>\tau_n-\tilde{t}_n}\frac{1}{\lambda_{\tJ}^{6}(\tilde{t}_n)}\left(\partial_t\tU^{\tJ}\left(\sigma,\frac{x}{\lambda_{\tJ}(\tilde{t}_n)}\right)\right)^2dx=\int_{|x|>\sigma}\left( \partial_t\tU^{\tJ}(\sigma,x) \right)^2dx>0,$$
 we obtain $\liminf_{n\to\infty}\left\|\partial_tU(\tau_n)\right\|_{L^2}>0$, a contradiction since \eqref{Re31} and Lemma \ref{L:energy} imply $\lim_{n\to\infty}\delta(\tau_n)=0$.
 \end{proof}
\begin{lemma}
 \label{L:Re40}
 \begin{gather}
  \label{Re40} \forall j\in \llbracket 1,\tJ-1\rrbracket,\quad \lim_{n\to\infty} \max_{\tilde{t}_n\leq t\leq t_n'} \left|\beta_j(t)\right|+\frac{\lambda_{j+1}(t)}{\lambda_j(t)}=0\\
  \label{Re41} \liminf_{n\to\infty} \min_{\tilde{t}_n\leq t\leq t_n'}\tilde{\gamma}(t)>0.
 \end{gather}
\end{lemma}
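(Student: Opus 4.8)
The plan is to run a continuity/ODE argument on $[\tilde t_n,t_n']$ in the spirit of Lemma~6.4 of~\cite{DuKeMe19Pb}, the key point being that this interval has length only $T\lambda_{\tJ}(\tilde t_n)$, which is negligible compared with the scales $\lambda_j(\tilde t_n)$ for $j<\tJ$. We may assume $t_n'<t_n$ for $n$ large (this follows from Lemma~\ref{L:Re30}), so that Proposition~\ref{P:modulation} applies on all of $[\tilde t_n,t_n']$, and that $\gamma(t)\lesssim\eps_0$ there by~\eqref{R18}. The first step is to record the initial data: at $t=\tilde t_n$ one has $\lambda_{j+1}(\tilde t_n)/\lambda_j(\tilde t_n)\to 0$ for $j\in\llbracket 1,\tJ-1\rrbracket$ (already established above via~\eqref{limite_j}) and $\beta_j(\tilde t_n)\to 0$ for such $j$. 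For the latter, changing variables in~\eqref{R155} gives $\beta_j(\tilde t_n)=-\int \Lambda W(y)\,\bigl[\lambda_j^3(\tilde t_n)\partial_tU(\tilde t_n,\lambda_j(\tilde t_n)y)\bigr]\,dy$, and by the choice of $\tJ$ the bracketed function converges weakly in $L^2$ to $\tU^j_1=0$, while $\Lambda W\in L^2(\RR^6)$.

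Next I would show that the large scales are essentially frozen on $[\tilde t_n,t_n']$. Since $\gamma(t)\lesssim\eps_0$ and $|\beta_j(t)|\lesssim\eps_0+o_n(1)$ (from~\eqref{smallness_betaj}), estimate~\eqref{smallness_lambdaj'} yields $|\lambda_j'(t)|\lesssim 1$, hence
$$\bigl|\lambda_j(t)-\lambda_j(\tilde t_n)\bigr|\lesssim t-\tilde t_n\leq T\lambda_{\tJ}(\tilde t_n),\qquad t\in[\tilde t_n,t_n'].$$
Dividing by $\lambda_j(\tilde t_n)$ and using $\lambda_{\tJ}(\tilde t_n)/\lambda_j(\tilde t_n)\to 0$ for $j\leq\tJ-1$, this gives $\lambda_j(t)=\lambda_j(\tilde t_n)(1+o_n(1))$ uniformly for $j\leq\tJ-1$, while the same bound with $j=\tJ$ gives $\lambda_{\tJ}(t)\leq C_T\lambda_{\tJ}(\tilde t_n)$. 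Combining, $\lambda_{j+1}(t)/\lambda_j(t)\to 0$ uniformly on $[\tilde t_n,t_n']$ for all $j\in\llbracket 1,\tJ-1\rrbracket$ (for $j\leq\tJ-2$ a ratio of two asymptotically constant scales whose initial ratio tends to $0$; for $j=\tJ-1$ use the two displayed facts). This is the ratio part of~\eqref{Re40}.

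For the $\beta_j$ part of~\eqref{Re40}, I would integrate~\eqref{derivative_betaj}: for $j<\tJ$ and $t\in[\tilde t_n,t_n']$, using $\lambda_j(s)\gtrsim\lambda_j(\tilde t_n)$ and that the ratios $\lambda_{j+1}(s)/\lambda_j(s)$ and $\lambda_j(s)/\lambda_{j-1}(s)$ are $o_n(1)$ by the previous step,
$$\bigl|\beta_j(t)-\beta_j(\tilde t_n)\bigr|\lesssim\int_{\tilde t_n}^{t}\frac{1}{\lambda_j(s)}\Bigl(\Bigl(\frac{\lambda_{j+1}}{\lambda_j}\Bigr)^2+\Bigl(\frac{\lambda_j}{\lambda_{j-1}}\Bigr)^2+\gamma^3+o_n(1)\Bigr)ds\lesssim\frac{T\lambda_{\tJ}(\tilde t_n)}{\lambda_j(\tilde t_n)}\bigl(o_n(1)+\eps_0^3\bigr),$$
whose right-hand side is $o_n(1)$ since $\lambda_{\tJ}(\tilde t_n)/\lambda_j(\tilde t_n)\to 0$; together with $\beta_j(\tilde t_n)\to 0$ this gives $\max_{[\tilde t_n,t_n']}|\beta_j(t)|\to 0$, completing~\eqref{Re40}. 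Finally, for~\eqref{Re41} I would argue by contradiction: if $\tgamma(\tau_n)\to 0$ for some $\tau_n\in[\tilde t_n,t_n']$ along a subsequence, then, since $\lambda_{j+1}(\tau_n)/\lambda_j(\tau_n)\to 0$ for $j<\tJ$ by~\eqref{Re40}, we get $\gamma(\tau_n)\to 0$; Lemma~\ref{L:Re30} then forces $(\tau_n-\tilde t_n)/\lambda_{\tJ}(\tilde t_n)\to+\infty$, contradicting $\tau_n\leq t_n'=\tilde t_n+T\lambda_{\tJ}(\tilde t_n)$.

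The step I expect to be the main obstacle is the ``frozen scales'' bookkeeping: the relation $\lambda_j(t)\approx\lambda_j(\tilde t_n)$ is only available for $j\leq\tJ-1$, whereas $\lambda_{\tJ}(t)$ is merely comparable to $\lambda_{\tJ}(\tilde t_n)$ with a constant growing in $T$, so one must check that $\lambda_{\tJ}(t)/\lambda_{\tJ-1}(t)\to 0$ survives. It does, precisely because $\lambda_{\tJ}(\tilde t_n)/\lambda_{\tJ-1}(\tilde t_n)\to 0$ and $\eps_0$ — hence all the implicit constants in Proposition~\ref{P:modulation} — is fixed \emph{before} $T$ in the order of quantifiers; keeping this ordering straight is what makes the uniformity over $t\in[\tilde t_n,t_n']$ (as opposed to merely pointwise in $t$) go through.
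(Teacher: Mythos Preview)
Your proof is correct and follows essentially the same approach as the paper's (which only gives a brief sketch referring to Lemma~6.4 of \cite{DuKeMe19Pb}): integrate the modulation estimates of Proposition~\ref{P:modulation} over $[\tilde t_n,t_n']$, using that the length $T\lambda_{\tJ}(\tilde t_n)$ of this interval is negligible compared to the scales $\lambda_j(\tilde t_n)$ for $j<\tJ$, and then deduce~\eqref{Re41} from~\eqref{Re40} and Lemma~\ref{L:Re30}. Your treatment of the initial values $\beta_j(\tilde t_n)\to 0$ via weak convergence against $\Lambda W\in L^2$, and your careful bookkeeping distinguishing the frozen scales $j\le\tJ-1$ from the merely $T$-comparable scale $j=\tJ$, are exactly the details one has to supply to make the sketch rigorous.
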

\begin{proof}[Sketch of proof]
(See Lemma 6.4 in \cite{DuKeMe19Pb}).
 The estimates \eqref{Re40} can be obtained by integrating the estimates on the derivatives of $\beta_j$ and $\lambda_j$ in Proposition \ref{P:modulation} and the fact that $t_n'-t_n=\lambda_{\tJ}(\tilde{t}_n)T\ll \lambda_j(\tilde{t}_n)$ for $j\in \llbracket 1,\tJ-1\rrbracket$.
 The limit \eqref{Re41} is a direct consequence of \eqref{Re40} and Lemma \ref{L:Re30}.
\end{proof}
\begin{lemma}
 \label{L:Re50}
 Let $\ell$ be defined by \eqref{Re20}. Then
 $$ \forall t\in [\tilde{t}_n,t_n'],\; |\ell|\leq 2C_0 \frac{\lambda_{\tJ}(t)\delta^{1/2}(t)}{\lambda_{\tJ}(\tilde{t}_n)},$$
 where $C_0$ is the constant in Proposition \ref{P:lower_bound}.
\end{lemma}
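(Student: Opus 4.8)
The plan is to transport, to the scale $\lambda_{\tJ}(t)$ of the $\tJ$-th bubble at time $t$, the argument already used to prove Proposition~\ref{P:lower_bound}, the point being that the coefficient $\ell$ attached by Proposition~\ref{P:nonradiative} to the non-radiative profile $\tU^{\tJ}$ is the $r^{-4}$-tail coefficient of $\partial_t\tU^{\tJ}$ and is the same at every time. Fix $t\in[\tilde t_n,t_n']$ and set $\sigma_n=(t-\tilde t_n)/\lambda_{\tJ}(\tilde t_n)$. Because $t_n'<t_n$ for $n$ large (Lemma~\ref{L:Re30} together with the definitions of $t_n$ and $t_n'$), the a priori information \eqref{R15}, \eqref{R17}, \eqref{R18}, Proposition~\ref{P:modulation}, Lemma~\ref{L:expansion} (with $s_n=\tilde t_n$) and Lemma~\ref{L:Re40} are all available on $[\tilde t_n,t_n']$; in particular the bubbles of index $<\tJ$ are, by \eqref{Re40}, essentially inert and well separated from the rest, $\delta(t)\lesssim\eps_0$, and $\delta(t)\gtrsim 1$ by \eqref{gamma_delta} and \eqref{Re41}.

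First I would use \eqref{R60bis} to write $\partial_tU(t)=\sum_k\partial_\tau V_n^k(t-\tilde t_n)+\partial_\tau r_n(t-\tilde t_n)$, noting that the profiles $V^k$ with $k<\tJ$ equal the stationary soliton $W$, hence contribute nothing to $\partial_tU$; that $V^{\tJ}=\tU^{\tJ}$ by Lemma~\ref{L:expansion}, so $\partial_\tau V_n^{\tJ}(t-\tilde t_n)$ is the rescaling $(\partial_t\tU^{\tJ})(\sigma_n,\cdot)_{[\lambda_{\tJ}(\tilde t_n)]}$, whose far field is $\ell\,\lambda_{\tJ}(\tilde t_n)\,r^{-4}$ up to the errors controlled by \eqref{Re20}; that the remaining profiles live at scales $\ll\lambda_{\tJ}(\tilde t_n)$ and, by the far-field part of Proposition~\ref{P:nonradiative} applied to each, only produce an $r^{-4}$-tail of size $O(\gamma\eps_0)\,\lambda_{\tJ}(\tilde t_n)$; and that $\partial_\tau r_n$ is $o_n(1)$ in the relevant region. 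Then I would estimate $\partial_tU(t)$ on the dyadic annulus $\{R\le r\le 2R\}$ with $R=R_n(t):=\lambda_{\tJ}(t)/\sqrt{\delta(t)}$, which by Lemma~\ref{L:Re40} satisfies $\lambda_{\tJ}(t)\ll R_n(t)\ll\lambda_{\tJ-1}(t)$ and $R_n(t)/\lambda_{\tJ}(\tilde t_n)\ge R_0$ for $\eps_0$ small: the bubbles of index $<\tJ$ contribute $o_n(1)$ on that annulus (they are seen through their regular core $\sim\lambda_j^{-2}$, whose $L^2$-mass there is $\lesssim\delta(t)(R_n(t)/\lambda_{\tJ-1}(t))^3=o_n(1)$), while the lower bound $\|\partial_tU(t)\|_{L^2(R_n(t)\le r\le 2R_n(t))}\ge\|\ell\lambda_{\tJ}(\tilde t_n)r^{-4}\|_{L^2(R_n(t)\le r\le 2R_n(t))}-C\eps^2(R_0/(R_n(t)/\lambda_{\tJ}(\tilde t_n)))^{5/4}-o_n(1)$ coming from \eqref{Re20}, together with the trivial bound $\|\partial_tU(t)\|_{L^2(R_n(t)\le r\le 2R_n(t))}\le\|\partial_tU(t)\|_{L^2}\le\delta(t)$, give, exactly as in the proof of Proposition~\ref{P:lower_bound}, $\delta(t)\gtrsim|\ell|\lambda_{\tJ}(\tilde t_n)/R_n(t)-o_{\eps_0}(1)-o_n(1)$, that is $|\ell|\le(C_0+o_{\eps_0}(1)+o_n(1))\,\lambda_{\tJ}(t)\sqrt{\delta(t)}/\lambda_{\tJ}(\tilde t_n)$ with $C_0$ the constant of Proposition~\ref{P:lower_bound}. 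Since $\delta(t)\gtrsim 1$, the lower-order terms are a small fraction of the main term once $\eps_0$ is small and $n$ large, giving $|\ell|\le 2C_0\,\lambda_{\tJ}(t)\delta^{1/2}(t)/\lambda_{\tJ}(\tilde t_n)$.

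The main obstacle is that the preceding matching is unconditional only when $\sigma_n$ is bounded, whereas a priori $\sigma_n$ can be as large as $T$ and $T$ is chosen only after $\eps_0$: for $\sigma_n$ of order $T$, the scale $\lambda_{\tJ}(t)$ sits well inside the backward light cone from $\tilde t_n$, \eqref{R60bis} no longer controls the remainder near $\{r\approx R_n(t)\}$, and one cannot read off the tail of $\partial_t\tU^{\tJ}$ this way. This is precisely the point resolved by the ``no return'' argument of \cite{DuKeMe19Pb}: Lemma~\ref{L:Re50} is not proved time slice by time slice in isolation but as part of a continuity/bootstrap argument in which, as long as \eqref{Re21}, \eqref{Re22} and the bound of Lemma~\ref{L:Re50} hold, the finite-dimensional dynamics of Proposition~\ref{P:modulation} forces $\lambda_{\tJ}$ to vary by at most a fixed factor and $\sigma_n$ to stay below a fixed $T^\ast$ (depending on $\ell$); within that regime $\sigma_n$ is bounded, $R_n(t)/\lambda_{\tJ}(\tilde t_n)$ stays above $\sigma_n$ once $\eps_0$ is small enough relative to $T^\ast$, and the error control above is uniform. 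Carrying out this bootstrap, the far-field estimates for the secondary profiles, and the identification of the constants is done exactly as in Subsections~6.2 and 6.3 of \cite{DuKeMe19Pb}, which is why the paper only sketches it.
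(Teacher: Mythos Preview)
Your first two paragraphs capture the right mechanism: transport the computation behind Proposition~\ref{P:lower_bound} to scale $\lambda_{\tJ}(t)$ via the expansion \eqref{R60bis} at $s_n=\tilde t_n$, using that the profiles with $k<\tJ$ are stationary and that \eqref{Re20} supplies the $\ell r^{-4}$ tail of $\partial_t\tU^{\tJ}$. You also correctly isolate the genuine difficulty, namely that \eqref{R60bis} only controls the region $|x|>\tau_n-\tilde t_n$, which can swallow the matching radius $R_n(t)=\lambda_{\tJ}(t)/\sqrt{\delta(t)}$ when $\sigma_n$ is of order $T$.

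The gap is in your resolution of that difficulty. You propose a bootstrap in which, as long as \eqref{Re21}, \eqref{Re22} and the bound of Lemma~\ref{L:Re50} hold, the ODE dynamics confines $\sigma_n$ below $T^*(\ell)$, and then you ask that $\eps_0$ be small ``relative to $T^*$''. This is circular in two ways. First, the confinement $\sigma_n\le T^*$ is precisely the content of Proposition~\ref{P:SD1}, which is applied \emph{after} Proposition~\ref{P:contra} (and hence after Lemma~\ref{L:Re50}) to reach the final contradiction; you cannot invoke it in the middle of the proof of Lemma~\ref{L:Re50}. Second, the smallness threshold $\eps_0$ in Proposition~\ref{P:contra} must be chosen \emph{independently of $T$}; since $T^*$ depends on $\ell$, which in turn depends on the profile $\tU^{\tJ}$ extracted from the sequence $\tilde t_n$ (itself determined by $\eps_0$), a choice $\eps_0\ll 1/(T^*)^2$ is not admissible in this logical order.

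The paper's argument (referred to \cite[Lemma~6.6]{DuKeMe19Pb}) avoids this: one argues by contradiction, picking $\tau_n\in[\tilde t_n,t_n']$ where the inequality fails, and then applies Proposition~\ref{P:lower_bound} directly to the non-radiative profile furnished by Lemma~\ref{L:expansion} and the expansion \eqref{R60bis} at $s_n=\tilde t_n$, rather than to the full solution at radius $R_n(t)$. Because the lower bound is applied at the level of the profile (a fixed $(0,0)$-non-radiative solution whose $\ell$ is time-independent), the conclusion holds for every $T$ with $\eps_0$ universal, and no appeal to the ODE result is needed before Proposition~\ref{P:contra} is established.
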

\begin{proof}[Sketch of proof]
 The proof is by contradiction, using Lemma \ref{L:expansion} and the expansion \eqref{R60bis} with $s_n=\tilde{t}_n$, $\tau\in [0,\tilde{t}_n-t_n']$ and the lower bound of the exterior scaling parameter in Proposition \ref{P:lower_bound}. We refer the reader to the proof of Lemma 6.6 in \cite{DuKeMe19Pb} for a detailed proof. 
\end{proof}
\begin{proof}[End of the Proof of Proposition \ref{P:contra}]
By \eqref{Re41}, the terms $o_n(1)$ in all the estimates of Subsections \ref{Sub:estim_modulation} and \ref{Sub:derivatives} can be bounded, for large $n$ and $t\in [\tilde{t}_n,t_n']$, by $\tilde{\gamma}(t)$. Also, by \eqref{Re40} and \eqref{Re41},  $\gamma(t)\leq 2\tilde{\gamma}(t)$ for large $n$, $t\in [\tilde{t}_n,t_n']$. Thus, we see that Proposition \ref{P:modulation} implies \eqref{Re21} and \eqref{Re22}. Since \eqref{Re30} is a direct consequence of Lemma \ref{L:Re50}, Proposition \ref{P:contra} follows.
\end{proof}

\subsection{Proof of the soliton resolution for the $|u|u$ nonlinearity}

Let $u$ be a solution of \eqref{NLWabs} such that $T_+(u)=+\infty$ and
\begin{equation}
 \label{R10abs}
 \limsup_{t\to+\infty} \|\vec{u}(t)\|_{\HHH}<\infty,
\end{equation} 
and let $v_L$ be the unique solution of the free wave equation $\partial^2_tv_L-\Delta v_L=0$ such that 
\begin{equation}
 \label{R11abs}
 \forall A\in \RR,\quad \lim_{t\to +\infty} \int_{|x|\geq A+|t|} |\nabla_{t,x}(u-v_L)(t,x)|^2\,dx=0
\end{equation}
(see \cite[Proposition 4.1]{Rodriguez16}). For $J\geq 1$ and $(f,g)\in \HHH$, we denote
\begin{equation}
\label{R12abs}
d_{J}(f,g)
=\inf_{\lambdabf \in \Lambda_J, \ (\iota_j)_{1\leq j \leq J}\in \{\pm 1\}^J } \left\{\Big\|(f,g)-\sum_{j=1}^J (\iota_j W_{(\lambda_j)},0)\Big\|_{\HHH}+\gamma(\lambdabf)\right\},
\end{equation} 
Assuming that $u$ does not scatter forward in time, by \cite{JiaKenig17}, we know that there exists $J\geq 1$, and a sequence $\{t_n\}_n\to+\infty$ such that
\begin{equation}
 \label{R13abs}
 \lim_{n\to\infty} d_{J}(\vec{u}(t_n)-\vec{v}_L(t_n))=0.
\end{equation} 
We again prove by contradiction that $\lim_{t\to\infty}d_J(\vec{u}(t)-\vec{v}_L(t))=0$. We thus assume that there exists a small $\eps_0>0$ and a sequence $\{\tilde{t}_n\}_n\to+\infty$ such that 
\begin{gather}
 \label{R14abs}
 \forall n,\quad \tilde{t}_n<t_n\\
 \label{R15abs}
 \forall n,\quad \forall t\in (\tilde{t}_n,t_n],\quad d_J(\vec{u}(t)-\vec{v}_L(t))<\eps_0\\
 \label{R16abs}
 d_J(\vec{u}(\tilde{t}_n)-\vec{v}_L(\tilde{t}_n))=\eps_0.
\end{gather}
The implicit function Theorem (Lemma B.1 \cite{DuKeMe19Pb}), as well as an extraction argument for the signs $(\iota_j(t))_{1\leq j\leq J}$, implies that for $\epsilon_0$ small enough, up to extracting a subsequence, there exist fixed signs $(\iota_j)_{1\leq j\leq J}$ such that for all $t\in [\tilde{t}_n,t_n]$, we can choose $\lambdabf(t)=(\lambda_1(t),\ldots,\lambda_J(t))\in \Lambda_J$ such that 
$$
U=u-v_L,\quad h(t)=u(t)-v_L(t)-M(t)=U(t)-M(t), \quad M(t)=\sum_{j=1}^J \iota_j W_{(\lambda_j(t))}
$$
with
\begin{equation}
 \label{R17abs}
 \forall j\in \llbracket 1,J\rrbracket, \quad \int \nabla h(t)\cdot\nabla (\Lambda W)_{(\lambda_j(t))}=0,
 \end{equation}
and (see Remark B.2 in \cite{DuKeMe19Pb}),
 \begin{equation}
 \label{R18abs}
 \big\| \left(h(t),\partial_t U(t)\right)\big\|_{\HHH}+\gamma(\lambdabf) \approx d_J(\vec{u}(t)-v_L(t)).
\end{equation} 
Thus, the only difference between the $u^2$ nonlinearity (Equation \eqref{NLW}) and the $|u|u$ nonlinearity (Equation \eqref{NLWabs}), is the appearance of fixed signs $(\iota_j)_{1\leq j \leq J}$ in the definition of the multisoliton $M$. The rest of the proof for the $u^2$ made in Subsections \ref{Sub:expansion}, \ref{Sub:estim_modulation}, \ref{Sub:derivatives} and \ref{subsection:solitonresolutionendproof} then extends readily, and we obtain the following for any fixed $T>0$, defining
$$
\tilde{\gamma}(t)=\sup_{\tJ\leq j\leq J-1} \frac{\lambda_{j+1}(t)}{\lambda_{j}(t)},\quad t_n'=\tilde{t}_n+T\lambda_{\tJ}(\tilde{t}_n).
$$
and referring to Subsection \ref{subsection:solitonresolutionendproof} for the notations:

\begin{proposition}
\label{P:contraabs}
For $\eps_0$ small enough (independently of $T$), for any $T>0$, for $n$ large enough one has $t_n'< t_n$.
Moreover, for all $t\in [\tilde{t}_n,t_n']$, and all $j\in \llbracket \tJ,J\rrbracket$
 \begin{gather}
   \label{Re50abs}
  |\beta_j^2(t)|\lesssim \tilde \gamma^2(t) \\
  \label{Re21abs}
  \left|\lambda_j'(t)-\kappa_2\beta_j(t)\right|\leq C\tgamma^{2}(t)\\
  \label{Re22abs}
  \left|\lambda_j(t)\beta'_j(t)-\kappa_0\left(\iota_j \iota_{j+1}\left( \frac{\lambda_{j+1}(t)}{\lambda_j(t)}\right)^2-\iota_j\iota_{j-1}\left( \frac{\lambda_j(t)}{\lambda_{j-1}(t)} \right)^2\right)\right|\leq C\tgamma^{3}(t),\\
  \label{Re30abs}
  |\ell|\leq C\left( \frac{\lambda_{\tilde{J}}(t)}{\lambda_{\tJ}(\tilde{t}_n)} \right)\sqrt{\gamma(t)}.
 \end{gather}
\end{proposition}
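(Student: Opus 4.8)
The plan is to re-run \emph{verbatim} the argument of Subsections~\ref{Sub:expansion}--\ref{subsection:solitonresolutionendproof}, now with the decomposition $U=u-v_L$, $h(t)=U(t)-M(t)$, $M(t)=\sum_{j=1}^J\iota_j W_{(\lambda_j(t))}$ fixed by \eqref{R17abs}--\eqref{R18abs}, and to check that the only effect of the signs is to multiply each surviving soliton--soliton interaction by a product $\iota_j\iota_k$. The first point, which is what makes the extension ``readily'', is that the linearized evolution is \emph{unchanged}: the nonlinearity $F(u)=|u|u$ has linearization $h\mapsto 2Wh$ both at $u=W$ and at $u=-W$ (since $F'(v)=2|v|$ and $W>0$), so the equation for $h$ takes the form $\partial_t^2h+L_{\lambdabf}h=(\text{source})$ with $L_{\lambdabf}=-\Delta+V_{\lambdabf}$, $V_{\lambdabf}=-2\sum_j W_{(\lambda_j)}$, \emph{independently of} $(\iota_j)_j$; the discrepancy $-2(|M|-\sum_jW_{(\lambda_j)})h$ between $-2|M|h$ and $V_{\lambdabf}h$ is supported where two solitons overlap and is therefore $O$ of a soliton--soliton product times $h$, hence absorbed in the source exactly as in the $u^2$ case. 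Consequently Proposition~\ref{pr:channels}, Lemma~\ref{L:estimateg1} and the profile decomposition of Lemma~\ref{L:expansion} apply with no change (the profiles $V^k$ of the sign version of Lemma~\ref{L:expansion} absorb the signs into their data and remain $(0,0)$ non-radiative), and the error bounds \eqref{est.h}, \eqref{est.g1}, \eqref{alphabeta}, \eqref{est.g12} hold as stated, since every bilinear quantity $W_{(\lambda_j)}W_{(\lambda_k)}$ ($j\neq k$) entering them is already taken in absolute value.

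Next I would track the signs through the two places where a \emph{leading-order} interaction survives. In the energy expansion (Lemma~\ref{L:energy}), the third line of \eqref{En10} becomes $-\sum_{1\le j<k\le J}\iota_j\iota_k\int W_{(\lambda_j)}^2 W_{(\lambda_k)}$, so \eqref{bound_delta2} is replaced by $\big|\tfrac12\delta^2-\kappa_1\sum_{j=1}^{J-1}\iota_j\iota_{j+1}(\lambda_{j+1}/\lambda_j)^2\big|\lesssim \gamma^3+o_n(1)$; since $|\iota_j\iota_{j+1}|=1$ this still gives $\delta\lesssim\gamma+o_n(1)$, forces $J\ge2$, and yields the comparison $\gamma\approx\delta+o_n(1)$ exactly as in \cite{DuKeMe19Pb}. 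Similarly, in Lemma~\ref{L:second_derivative} the term \eqref{main_beta'4} now produces $\kappa_0\iota_j\iota_{j+1}(\lambda_{j+1}/\lambda_j)^2$ and $-\kappa_0\iota_j\iota_{j-1}(\lambda_j/\lambda_{j-1})^2$ in place of $\kappa_0(\lambda_{j+1}/\lambda_j)^2$ and $-\kappa_0(\lambda_j/\lambda_{j-1})^2$, while \eqref{main_beta'1}--\eqref{main_beta'3} stay $O(\gamma^4|\log\gamma|^2)+o_n(1)$; this gives the signed modulation identity \eqref{Re22abs} and the corresponding version of Proposition~\ref{P:modulation}. Lemma~\ref{L:derivative}, and hence \eqref{Re21abs} and \eqref{Re50abs}, are unaffected, $\Lambda W$ being a universal profile. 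The construction of $\tJ$ and of the profile $\tU^{\tJ}$ then proceeds as in Subsection~\ref{subsection:solitonresolutionendproof}: $\tU^{\tJ}$ is $(0,0)$ non-radiative and is not a rescaled $W$ nor $-W$, so Proposition~\ref{P:nonradiative} furnishes $\ell\neq0$, and the analogues of Lemmas~\ref{L:Re30}--\ref{L:Re50}, whose proofs are insensitive to the signs, give $t_n'<t_n$ for large $n$, the bound \eqref{Re30abs}, and \eqref{Re50abs}--\eqref{Re22abs} on $[\tilde t_n,t_n']$.

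The step I expect to require the most care is the bookkeeping of the signs in the energy expansion (verifying that the sum over $j<k$ collapses to consecutive pairs $\iota_j\iota_{j+1}(\lambda_{j+1}/\lambda_j)^2$ with all remaining pairs $O(\gamma^3)$, uniformly in the sign pattern) together with the check that the reduced ODE system \eqref{Re21abs}--\eqref{Re30abs} still forces $t_n'-\tilde t_n\le T^*\lambda_{\tJ}(\tilde t_n)$ in Appendix~\ref{A:SDE}: the factors $\iota_j\iota_{j\pm1}$ turn some attractive adjacent interactions into repulsive ones, so one must confirm that the virial/monotonicity quantity used there to close the argument in \cite{DuKeMe19Pb} is insensitive to those signs --- which it is, because the lower bound \eqref{Re30abs} pins $\lambda_{\tJ}(t)$ to the scale $\lambda_{\tJ}(\tilde t_n)$ on the whole interval regardless of the direction of the nearby interactions. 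Once this is in place, choosing $T>T^*$ contradicts $t_n'<t_n$, and this completes the proof of Proposition~\ref{P:contraabs} and hence of the soliton resolution for \eqref{NLWabs}.
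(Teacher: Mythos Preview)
Your proposal is correct and matches the paper's approach, which simply asserts that the argument of Subsections~\ref{Sub:expansion}--\ref{subsection:solitonresolutionendproof} extends readily once the signs $\iota_j$ are inserted; you have supplied the bookkeeping the paper leaves implicit, and your key observation that the linearized potential $V_{\lambdabf}=-2\sum_jW_{(\lambda_j)}$ is independent of the signs (since $F'(\pm W)=2W$) is exactly what makes the extension painless. One small correction, lying outside the scope of Proposition~\ref{P:contraabs} itself: the ODE argument in Appendix~\ref{A:SDE} absorbs the signs not because \eqref{Re30abs} pins $\lambda_{\tJ}$, but through the sign-adapted weights $\theta_j$ defined there (see \eqref{thetajj}), which guarantee $A'(t)\gtrsim\gamma^2(t)$ for every sign pattern---this is already built into the statement of Proposition~\ref{P:SD1}, so your conclusion stands.
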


A contradiction is then obtained between the results of Proposition \ref{P:contraabs} and of Proposition \ref{A:SDE}, concluding the proof of Theorem \ref{T:main}.

\subsection{Proof of the rigidity result for global non-radiative solutions}

The proof of Theorem \ref{T:rigidity} is a direct consequence of the proof of the soliton resolution Theorem \ref{T:main}. We refer to \cite{DuKeMe19Pb}, Section 7, for the details.

\appendix

 \section{Study of a system of differential inequalities}
\label{A:SDE}
In this appendix we prove
\begin{proposition}
\label{P:SD1}
 Let $C>0$, $\kappa_0,\kappa_2>0$ and $J_0\geq 2$ an integer. There exists $\eps_0>0$ with the following property. For all $L>0$, there exists $T^*=T^*(L,C,\kappa_0,\kappa_2,J_0)$ such that, if $\beta_1,\ldots,\beta_{J_0}\in C^1([0,T],\RR)$, $\lambda_1,\ldots,\lambda_{J_0}\in C^1([0,T], (0,\infty))$, $(\iota_1,\ldots,\iota_{J_0})\in \{\pm 1\}^{J_0}$ and $\gamma(t)=\sup_{1\leq j\leq J_0-1} \frac{\lambda_{j+1}}{\lambda_j}$ satisfy, for $t\in [0,T]$
 \begin{gather}
  \label{SD10} \forall j\in \llbracket 1,J_0\rrbracket,\quad  \left|\lambda_j'-\kappa_2\beta_j\right|\leq C\gamma^{2},\\
  \label{SD11} \forall j\in \llbracket 1,J_0\rrbracket,\quad \left|\lambda_j \beta_j'+\kappa_0 \left( \iota_j\iota_{j+1} \left( \frac{\lambda_{j+1}}{\lambda_j} \right)^2- \iota_j\iota_{j-1}\left( \frac{\lambda_{j}}{\lambda_{j-1}} \right)^2\right)\right|\leq C\gamma^{3},\\
  \label{SD12}
  \frac{1}{C}\sup_{1\leq j\leq J_0} \beta_j^2(t)\leq \gamma^2(t)\leq \eps_0^2,\\
  \label{SD13}
  \left( \frac{\lambda_1(t)}{\lambda_1(0)} \right)^4\geq \frac{L}{\gamma^2(t)},
 \end{gather}
then
$$T\leq T^*\lambda_1(0).$$
\end{proposition}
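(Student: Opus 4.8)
The plan is to carry out the ODE analysis of \cite[Appendix]{DuKeMe19Pb}. First I would normalise: by the scaling $t\mapsto t/\lambda_1(0)$, $\lambda_j\mapsto\lambda_j/\lambda_1(0)$, $\beta_j\mapsto\beta_j$, under which \eqref{SD10}--\eqref{SD13} are invariant, one may assume $\lambda_1(0)=1$, and the goal becomes $T\le T^*$. Evaluating \eqref{SD13} at $t=0$ gives $\gamma(0)^2\ge L$, so by \eqref{SD12} necessarily $L\le\eps_0^2$; if $L>\eps_0^2$ the hypotheses are never satisfiable and any $T^*$ works, so assume $L\le\eps_0^2$. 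From \eqref{SD10} and \eqref{SD12}, $|\lambda_1'|\le\kappa_2|\beta_1|+C\gamma^2\le\kappa_2\sqrt C\,\gamma+C\gamma^2\lesssim\gamma\le\eps_0$, so $\lambda_1$ is slowly varying; and \eqref{SD13} together with $\gamma\le\eps_0$ yields the uniform bound $\lambda_1(t)\ge\rho_0:=(L/\eps_0^2)^{1/4}>0$ on $[0,T]$, so in particular $\lambda_1$ stays bounded away from $0$.

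The core step is to bound the length of $[0,T]$ measured in the renormalised time $s$ defined by $ds=dt/\lambda_1(t)$, say by $S^*=S^*(L,C,\kappa_0,\kappa_2,J_0)$. Writing $\dot{}=d/ds$ and $\mu_j=\lambda_{j+1}/\lambda_j$ (with $\mu_{J_0}=0$), equations \eqref{SD10}--\eqref{SD11} become a system for $(\log\lambda_j,\beta_j)$ in which the inner blocks carry factors $(\mu_1\cdots\mu_{j-1})^{\pm1}$, reflecting their faster dynamics. For the outermost pair the system closes: setting $v:=\frac{d}{ds}\log\mu_1$ and using \eqref{SD10}--\eqref{SD12} with $\eps_0$ small, one obtains
\[
\dot v=\iota_1\iota_2\,\kappa_0\kappa_2-v^2+O(\eps_0),\qquad |v|\le\kappa_2\sqrt C,\qquad \mu_1\le\gamma\le\eps_0 .
\]

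I would then run a dichotomy on the sign $\iota_1\iota_2$. If $\iota_1\iota_2=-1$ then $\dot v\le-\tfrac12\kappa_0\kappa_2<0$ (for $\eps_0$ small), so $v$ leaves the a priori ball $|v|\le\kappa_2\sqrt C$ after $s$-time $\lesssim 1/\kappa_0$, contradicting \eqref{SD12}. The substantive case — the only one occurring for Equation \eqref{NLW} — is $\iota_1\iota_2=+1$: here $\dot v=\kappa_0\kappa_2-v^2+O(\eps_0)$ has a saddle structure with equilibria near $\pm\sqrt{\kappa_0\kappa_2}$, and one argues that $v$ either leaves $|v|\le\kappa_2\sqrt C$ (below $-\sqrt{\kappa_0\kappa_2}$) or, after a bounded $s$-interval, stays bounded below by a positive constant; in the latter case $\log\mu_1$ increases at rate $\gtrsim\sqrt{\kappa_0\kappa_2}$, so $\mu_1=\gamma$ leaves the window $[\sqrt L/\lambda_1(t)^2,\ \eps_0]$ forced by \eqref{SD13}--\eqref{SD12} — whose lower end decreases at rate only $O(\eps_0)$ because $\lambda_1$ is slowly varying — in bounded $s$. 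Either way $s\le S^*$ when $J_0=2$. For $J_0\ge3$ the same analysis applies to the pair realising $\gamma=\max_j\mu_j$; the scale separation $\mu_j\le\eps_0$ decouples the blocks at leading order — exactly as in Lemma \ref{L:expansion} — and one concludes as in \cite{DuKeMe19Pb}, iterating over blocks if the maximising index changes.

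Finally, from $|\frac{d}{ds}\log\lambda_1|\lesssim\gamma\le\eps_0$ and $s\le S^*$ one gets $\lambda_1(t)\le e^{C\eps_0 S^*}\lambda_1(0)$ on $[0,T]$, whence $T=\int_0^{s_{\max}}\lambda_1\,ds\le S^*e^{C\eps_0 S^*}\lambda_1(0)=:T^*\lambda_1(0)$, as claimed. The main obstacle is the core step, and inside it the multiscale bookkeeping for $J_0\ge3$: the index attaining $\max_j\mu_j$ may switch in time, the inner and outer blocks evolve on renormalised time scales differing by factors as small as $\eps_0^{J_0-1}$, and one must keep the $O(\gamma^3)$ errors in \eqref{SD11} subordinate to the $O(\gamma^2)$ main terms after division by the small $\lambda_j$'s — which is precisely where the a priori bound $|\beta_j|\lesssim\gamma$ of \eqref{SD12} is indispensable.
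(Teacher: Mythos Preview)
Your approach differs substantially from the paper's (which follows \cite{DuKeMe19Pb}). The paper does not renormalise time or analyse ratios pairwise; it introduces a weighted virial quantity $A(t)=\sum_j\theta_j\lambda_j(t)\beta_j(t)$ with positive weights $\theta_j$ chosen so that the interaction terms from \eqref{SD11} combine with the right sign, yielding $A'(t)\gtrsim\gamma^2(t)$ uniformly in $J_0$ and in the signs $(\iota_j)$. Pairing with $V(t)=\sum_j\theta_j\lambda_j^2(t)$, it shows $A/V^c$ is nondecreasing for $c\in[1/2,c_0]$ with some $c_0>1/2$, uses \eqref{SD13} to get $A(1/L)\gtrsim L^4$, and then plays off an upper bound on $V(T)$ (from $c=c_0$) against a linear lower bound on $\sqrt{V(T)}$ (from $c=1/2$) to bound $T$. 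No case analysis on $\iota_j$, no tracking of individual $\mu_j$.

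Your phase-plane argument for $J_0=2$ is plausible but has a gap in the $\iota_1\iota_2=+1$ dichotomy: the $O(\eps_0)$ error in $\dot v=\kappa_0\kappa_2-v^2+O(\eps_0)$ allows $v$ to linger near the unstable equilibrium $-\sqrt{\kappa_0\kappa_2}$ for an $s$-time that is not bounded by the ODE alone, so ``after a bounded $s$-interval'' is not justified. This is fixable via \eqref{SD13}: while $v\approx-\sqrt{\kappa_0\kappa_2}$ the ratio $\mu_1=\gamma$ decays exponentially in $s$ but $\log\lambda_1$ has integrable $s$-derivative $O(\gamma)$, so $\lambda_1$ stays bounded and $\lambda_1^4\gamma^2<L$ after bounded $s$-time.

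The real problem is $J_0\geq 3$. Your claim that the system ``decouples at leading order'' because $\mu_j\leq\eps_0$ is wrong: all the $\mu_j$ can be of the same order $\approx\gamma$ simultaneously, and if the maximising index $k$ is interior, \eqref{SD11} for $\beta_k$ and $\beta_{k+1}$ involves $\mu_{k-1}$ and $\mu_{k+1}$, which need not be small relative to $\mu_k$. There is no closure of the maximising (or any single) pair, and the reference to Lemma~\ref{L:expansion} is not relevant --- that lemma concerns subsequences along which some ratios tend to zero, not the generic configuration on a fixed time interval. This is precisely where the virial approach pays off: summing over $j$ with the weights $\theta_j$ makes the interaction terms telescope into $\sum_j c_j\theta_{j-1}(\lambda_j/\lambda_{j-1})^2$ with positive coefficients, independently of which ratio dominates or how the signs are arranged.
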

In \eqref{SD11} we have made the convention $\lambda_0=+\infty$, $\lambda_{J+1}=0$.

The proof is the same as the proof of \cite[Proposition 6.1]{DuKeMe19Pb} (see Subsection 6.4 there). We sketch it for the sake of completeness. In \cite[Proposition 6.1]{DuKeMe19Pb}, \eqref{SD12} is replaced by a slightly stronger assumption. The only point to check is that the proof still works under the weaker assumption \eqref{SD12}.

\begin{remark}
 In the case where the nonlinearity is $u^2$, we will use Proposition \ref{P:SD1} with $\iota_j=+1$ for all $j$. The general case $\iota_j\in \{\pm 1\}$ is needed for the odd nonlinearity $|u|u$.
\end{remark}
\begin{proof}
In all the proof $C$ denotes a large positive constant that might depend on $\kappa_0$ and $\kappa_2$ and the constant $C$ in \eqref{SD10}, \eqref{SD11}, \eqref{SD12}, but not on $L$ and $\eps_0$. 

Rescaling the time and normalizing the parameters $\lambda_j$, we can assume $\lambda_1(0)=1$. We define $(\theta_j)_{1\leq j\leq J_0}$ by $\theta_1=1$ and
 $\theta_j=2\theta_{j-1}$if $\iota_j\iota_{j-1}=1$, $\theta_j=\frac{1}{2}\theta_{j-1}$ if $\iota_j\iota_{j-1}=-1$, so that
 \begin{equation}
\label{thetajj}
 \iota_j\iota_{j-1}(\theta_j-\theta_{j-1})=c_j\theta_{j-1},\quad c_j\in\left\{\frac{1}{2},1\right\}.
 \end{equation}

 Let
$$ A(t)=\sum_{j=1}^{J_0} \theta_j\lambda_j(t)\beta_j(t).$$

\noindent\textbf{Step 1.} In this step we prove
\begin{align}
 \label{SD21}
 A'(t)&\geq \kappa_2\sum_{j=1}^{J_0}\theta_j\beta_j^2(t)+\frac{1}{C}\gamma^2(t),\\
\label{SD22}
 A'(t)&\geq \frac{1}{\kappa_2}\sum_{j=1}^{J_0}\theta_j\left(\lambda_j'(t)\right)^2+\frac{1}{C}\gamma^2(t).
 \end{align}
By \eqref{SD10}, \eqref{SD11} and \eqref{SD12}, reorganizing the indices in the second sum, we obtain
$$ A'(t)=\kappa_2\sum_{j=1}^{J_0} \theta_j\beta_j^2+\kappa_0\sum_{j=2}^{J_0} \left( \frac{\lambda_j}{\lambda_{j-1}} \right)^2\iota_j\iota_{j-1}(\theta_j-\theta_{j-1})+\OOO(\gamma^{3}),$$
and \eqref{SD21} follows from \eqref{thetajj} and \eqref{SD12}. In view of \eqref{SD10}, we also obtain \eqref{SD22}.\\

\noindent\textbf{Step 2.} Let
\be \label{SD1000}
V(t)=\sum_{j=1}^{J_0} \theta_j\lambda_j^2(t).
\ee
In this step we prove
\begin{equation}
 \label{SD30}
 A\left( \frac 1L \right)\geq \frac 1C L^4,\quad V\left( \frac{1}{L} \right)\leq \frac{5}{L^2}.
\end{equation}
Indeed, using that $A'(t)\geq C^{-1}\gamma^2(t)$ by the preceding step, we obtain
\begin{equation}
 \label{SD32}
 \int_0^{1/L} A'(t)\lambda_1^4(t)dt\geq \frac 1C \int_0^{1/L} \gamma^2(t)\lambda_1^4(t)dt.
\end{equation}
Next, by integration by parts and using that $\lambda_1(0)=1$,
\begin{equation}
 \label{SD33}
 \int_{0}^{1/L} A'(t)\lambda_1^4(t)=A\left( \frac{1}{L} \right)\lambda_1^4\left( \frac{1}{L} \right)-A(0)-4\int_0^{1/L} A(t)\lambda_1^3(t)\lambda_1'(t)dt.
\end{equation}
Since by \eqref{SD10} and \eqref{SD12}, $A=\frac{1}{\kappa_2} \lambda_1'\lambda_1+\OOO(\gamma^{2}\lambda_1)$, we obtain
\begin{equation}
 \label{SD34}
 \int_0^{1/L} A(t)\lambda_1^3(t)\lambda_1'(t) dt=\frac{1}{\kappa_2} \int_0^{1/L} \lambda_1^4(t)(\lambda_1'(t))^2dt+\OOO\left( \int_0^{1/L} \gamma^{3}\lambda_1^4(t)dt \right).
\end{equation}
Combining \eqref{SD32}, \eqref{SD33} and \eqref{SD34} we deduce, using that $\gamma\leq \eps_0$ is small and \eqref{SD13},
\begin{multline}
 \label{SD35}
 A\left( \frac{1}{L} \right)\lambda_1^4\left( \frac 1L \right)\\
 \geq A(0) +\frac{4}{\kappa_2} \int_0^{1/L} \lambda_1^4(t)(\lambda_1'(t))^2dt+\frac{1}{C} \int_0^{1/L} \gamma^2(t)\lambda_1^4(t)dt\geq A(0)+\frac 1C.
\end{multline}
On the other hand, $|A(0)|=\left|\sum_{j=1}^{J_0} \theta_j\beta_j(0)\lambda_j(0)\right|\leq C\gamma(0)$ since $\lambda_1(0)=1$. Furthermore $\lambda_1(1/L)=1+\int_0^{1/L} \lambda_1'(t)dt=1+\OOO(L^{-1} \gamma(L^{-1}))$. Since by \eqref{SD13} at $t=0$, $L\leq \gamma^2(0)\leq \eps_0$, we have $L^{-1}\geq 1$, thus $\lambda_1(1/L)\leq 2/L$. This yields the first inequality in \eqref{SD30} in view of \eqref{SD35}. This also yields the second inequality in \eqref{SD30} since:
$$
V(\frac 1L) =\lambda_1^2(\frac 1L) \left[1+\sum_{j=2}^{J_0} \theta_j \frac{\lambda_j^2(\frac 1L)}{\lambda_1^2(\frac tL)} \right]=\lambda_1^2(\frac 1L) \left[1+O(\gamma^2(\frac tL)) \right]\leq \frac{5}{L^2}.
$$

\noindent\textbf{Step 3.} In this step, we prove that there exists $c_0> 1/2$ such that
\begin{equation}
 \label{SD40}
 \forall c\in [1/2,c_0],\; \forall t\in [0,T],\quad \frac{d}{dt}\left( \frac{A(t)}{V^c(t)} \right)\geq 0.
\end{equation}
To prove it, first notice using \eqref{SD21}, \eqref{P:SD1} and \eqref{SD12} that:
\begin{equation}
 \label{SD999}
A'(t)\geq \max \left( (\kappa_2+\frac 1C) \sum_{j=1}^{J_0} \theta_j \beta_j^2(t) \ , \ (\frac{1}{\kappa_2}+\frac 1C) \sum_{j=1}^{J_0} \theta_j (\lambda_j'(t))^2 \right).
\end{equation}
Then, since $\frac{d}{dt} \left( \frac{A(t)}{V^c(t)} \right)=\frac{A'V-cAV'}{V^{c+1}}$, the inequality \eqref{SD40} follows from
$$A(t)V'(t)=2\sum_{j=1}^{J_0} \theta_j\lambda_j\beta_j\sum_{j=1}^{J_0} \theta_j\lambda_j\lambda_j'\leq \frac{2 V(t)A'(t)}{\sqrt{ (\kappa_2+C^{-1})(\kappa_2^{-1}+C^{-1})}},$$
where we used Cauchy-Schwarz followed by \eqref{SD1000} and \eqref{SD999}.\\

\noindent \textbf{Step 4.} \emph{Conclusion}. From \eqref{SD40} with $c=c_0$ and Step 2, for all $t\in [1/L,T]$, $\frac{A(t)}{V^{c_0}(t)}\geq \frac{1}{C}L^{4+2c_0}$. Hence $V(t)^{c_0}\leq CL^{-4-2c_0}A(t)\leq CL^{-4-2c_0} V(t)^{1/2}\gamma(t)$, by Cauchy-Schwarz and \eqref{SD12}, which yields
\begin{equation}
 \label{SD41} 
 V(T)\leq L^{-\frac{4+2c_0}{c_0-1/2}}.
\end{equation} 
Next, from \eqref{SD40} with $c=1/2$ and Step 2, 
for $t\in [1/L,T]$, 
$\frac{A(t)}{V^{1/2}(t)}\geq \frac{1}{C}L^{5}$. Since by \eqref{SD10} and \eqref{SD12}, $V'(t)=2\kappa_2\sum_{j=1}^{J_0}\theta_j\beta_j\lambda_j+\OOO(\gamma^{2}V^{1/2}(t))$, and $A(t)=\sum_{j=1}^{J_0}\theta_j\beta_j\lambda_j$, we deduce $\frac{V'(t)}{V(t)^{1/2}}\geq \frac 1CL^5+\OOO(\gamma^{2})$, and thus, integrating between $1/L$ and $t$,
$$ \sqrt{V(t)}\geq \frac{1}{C}L^5\left( t-\frac{1}{L} \right)+\int_{1/L}^{t}\OOO(\gamma^{2}(s))ds.$$
By Steps 1 and 2, $\int_{1/L}^{t}\gamma^2(s)ds\leq CA(t)\leq C \sqrt{V(t)}\eps_0$. Combining these two inequalities:
\begin{equation}
\label{SD42} 
\sqrt{V(T)}\geq \frac{1}{C}L^5\left( T-\frac{1}{L} \right).
\end{equation} 
Combining \eqref{SD41} and \eqref{SD42}, we obtain as desired an upper bound for $T$ that only depends on $L$. 
\end{proof}

\section{A few estimates}
\label{A:estimates}
\begin{lemma}
Let $0<\lambda<\mu$ and $R>0$. Then
\begin{gather}
\label{estim1}
\|\Lambda W_{[\lambda]}\|_{L^2_R}\approx \|W_{[\lambda]}\|_{L^2_R}\approx \min(1,\lambda/R), \quad \left|\int_{|x|>R}(\Lambda W)_{[\lambda]}(\Lambda W)_{[\mu]}dx\right|\lesssim \frac{\lambda}{\mu} \\
\label{estim2}
\Big\|\big|\Lambda W_{(\lambda)}W_{(\mu)}\big|+\left|\Lambda W_{(\mu)}W_{(\lambda)}\right|+W_{(\lambda)}W_{(\mu)}\Big\|_{L^1L^2(\{|x|>|t|\})}\lesssim \frac{\lambda^2}{\mu^2}\left\langle\log(\frac{\mu}{\lambda})\right\rangle.\\
 \label{estim3}
  \big\|t(\Lambda W)_{[\lambda]}W_{(\mu)}\big\|_{L^1L^2(\{|x|>|t|\})}\lesssim \frac{\lambda}{\mu},\quad \big\|t(\Lambda W)_{[\mu]}W_{(\lambda)}\big\|_{L^1L^2(\{|x|>|t|\})}\lesssim \frac{\lambda^2}{\mu^2}
\end{gather}
If $\lambda<\mu<R$,
\begin{equation}
 \label{estim4}
 \left\|t(\Lambda W)_{[\mu]}W_{(\lambda)}\right\|_{L^1L^2(\{|x|>R+|t|\})}\lesssim \frac{\lambda^2\mu}{R^3}.
\end{equation} 
If $R<R'<\lambda$, 
\begin{equation}
 \label{estim5}
 \Big\|W_{(\lambda)} \indic_{\{R+|t|<|x|<R'+|t|\}}\Big\|_{L^2L^4}\lesssim \left(  \frac{R'-R}{\lambda}\right)^{1/4}.
\end{equation}
If $R\geq 1$:
\begin{equation} \label{estim6}
\| W  \indic_{\{\max(|t|,R)<|x|\}} \|_{L^2L^4}\lesssim R^{-2}.
\end{equation}

\end{lemma}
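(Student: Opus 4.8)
The plan is to reduce every norm to a one-dimensional weighted integral and then run the same dyadic bookkeeping in each case, using only the size profiles of $W$, $\Lambda W$ and $\Lambda_0\Lambda W$. Since $W(r)=(1+r^2/24)^{-2}$, we have $W(r)\approx\langle r\rangle^{-4}$; $\Lambda W$ is smooth, bounded near $r=0$, and $\Lambda W(r)=1152\,r^{-4}+O(r^{-6})$ as $r\to\infty$, so $|\Lambda W(r)|\lesssim\langle r\rangle^{-4}$ on $(0,\infty)$ and $|\Lambda W(r)|\approx r^{-4}$ for $r$ large; likewise $|\Lambda_0\Lambda W(r)|\lesssim\langle r\rangle^{-4}$. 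Rescaling, $0\le W_{(\lambda)}(r)\approx\min(\lambda^{-2},\lambda^2r^{-4})$, $|\Lambda W_{(\lambda)}(r)|\lesssim\min(\lambda^{-2},\lambda^2r^{-4})$ and $|(\Lambda W)_{[\lambda]}(r)|\lesssim\min(\lambda^{-3},\lambda r^{-4})$. I would also record at the outset that $\|f\|_{L^2_R}^2=\int_R^\infty f(r)^2r^5\,dr$, that an $L^1L^2$ norm over a cone $\{|x|>R+|t|\}$ equals $2\int_0^\infty\|F(t,\cdot)\|_{L^2_{R+t}}\,dt$, and that the square of an $L^2L^4$ norm over such a cone equals $2\int_0^\infty\big(\int_{R+t}^\infty|F(t,r)|^4r^5\,dr\big)^{1/2}\,dt$ (the spatial integral running over the support of the indicator when $F$ carries one).

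For \eqref{estim1}, the scaling change of variables $s=r/\lambda$ gives $\|g_{[\lambda]}\|_{L^2_R}^2=\int_{R/\lambda}^\infty g(s)^2s^5\,ds$ for $g\in\{W,\Lambda W\}$; since $g(s)^2s^5$ is integrable on $(0,\infty)$ and comparable to $s^{-3}$ near infinity, this is $\approx1$ for $R\le\lambda$ and $\approx(\lambda/R)^2$ for $R\ge\lambda$, i.e.\ $\|g_{[\lambda]}\|_{L^2_R}\approx\min(1,\lambda/R)$. For the cross term I would insert the profiles of $(\Lambda W)_{[\lambda]}$, $(\Lambda W)_{[\mu]}$ and split $\int_0^\infty(\cdots)r^5\,dr$ over $(0,\lambda)$, $(\lambda,\mu)$, $(\mu,\infty)$: the three pieces are $\approx(\lambda/\mu)^3$, $\lambda\mu^{-3}\int_\lambda^\mu r\,dr\approx\lambda/\mu$, and $\lambda\mu\int_\mu^\infty r^{-3}\,dr\approx\lambda/\mu$, and restricting to $r>R$ only decreases the integral.

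For \eqref{estim2}--\eqref{estim4}, I would compute $\|F(t,\cdot)\|_{L^2_{R+t}}$ by splitting the radial integral at the relevant scales and then integrate in $t$. For \eqref{estim2}, the three functions $W_{(\lambda)}W_{(\mu)}$, $|\Lambda W_{(\lambda)}W_{(\mu)}|$, $|\Lambda W_{(\mu)}W_{(\lambda)}|$ all obey the profile $\min(\lambda^{-2},\lambda^2r^{-4})\min(\mu^{-2},\mu^2r^{-4})$, and one gets $\|F(t,\cdot)\|_{L^2_t}\lesssim\lambda\mu^{-2}$ for $0<t<\lambda$, $\lesssim\lambda^2\mu^{-2}t^{-1}$ for $\lambda<t<\mu$, and $\lesssim\lambda^2\mu^2t^{-5}$ for $t>\mu$; integrating in $t$, the middle range contributes $\int_\lambda^\mu\lambda^2\mu^{-2}t^{-1}\,dt\approx\frac{\lambda^2}{\mu^2}\log(\mu/\lambda)$ while the other two contribute $\lesssim\lambda^2/\mu^2$. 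For the first bound of \eqref{estim3}, the same computation with $(\Lambda W)_{[\lambda]}W_{(\mu)}$ gives $\|F(t,\cdot)\|_{L^2_t}\lesssim\mu^{-2},\ \lambda\mu^{-2}t^{-1},\ \lambda\mu^2t^{-5}$ on the three ranges, and now the extra factor $t$ turns the middle range into $\int_\lambda^\mu\lambda\mu^{-2}\,dt\approx\lambda/\mu$ (which exactly cancels the logarithm), the outer ranges again being $\lesssim\lambda/\mu$; the same scheme for $(\Lambda W)_{[\mu]}W_{(\lambda)}$ gives $\lambda^2/\mu^2$. For \eqref{estim4}, since $\mu<R$ one stays in the region $r>R$, where $(\Lambda W)_{[\mu]}W_{(\lambda)}\lesssim\lambda^2\mu r^{-8}$, so $\|F(t,\cdot)\|_{L^2_{R+t}}\lesssim\lambda^2\mu(R+t)^{-5}$ and $\int_0^\infty t(R+t)^{-5}\,dt\approx R^{-3}$ gives the bound.

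For \eqref{estim5}, the $r$-interval $(R+|t|,R'+|t|)$ has length $R'-R$; splitting on $|t|\le\lambda$ versus $|t|>\lambda$, in the first regime $r\lesssim\lambda$, so $W_{(\lambda)}\lesssim\lambda^{-2}$ and $\int_{R+|t|}^{R'+|t|}W_{(\lambda)}^4r^5\,dr\lesssim\lambda^{-3}(R'-R)$, while in the second $r>|t|$, so $W_{(\lambda)}\lesssim\lambda^2r^{-4}$ and that integral is $\lesssim\lambda^8(R'-R)|t|^{-11}$; hence $\|F(t,\cdot)\|_{L^4}^2\lesssim(\lambda^{-3}(R'-R))^{1/2}$ on $|t|\le\lambda$ and $\lesssim\lambda^4(R'-R)^{1/2}|t|^{-11/2}$ on $|t|>\lambda$, and integrating each regime in $t$ yields $\lesssim((R'-R)/\lambda)^{1/2}$, so that $\|\cdot\|_{L^2L^4}\lesssim((R'-R)/\lambda)^{1/4}$. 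For \eqref{estim6}, since $R\ge1$ one has $W(r)\approx r^{-4}$ on $\{r>\max(|t|,R)\}$, whence $\int_{\max(|t|,R)}^\infty W^4r^5\,dr\approx\max(|t|,R)^{-10}$, $\|F(t,\cdot)\|_{L^4}^2\approx\max(|t|,R)^{-5}$, and $\int_{\mathbb R}\max(|t|,R)^{-5}\,dt\approx R^{-4}$, giving $\|\cdot\|_{L^2L^4}\lesssim R^{-2}$. None of this presents a genuine obstacle; the only points demanding care are the lower bounds in \eqref{estim1} (which rely on $\Lambda W$ not vanishing identically near infinity and having a definite leading coefficient) and keeping track of why the logarithm present in \eqref{estim2} disappears in \eqref{estim3} — which is entirely due to the extra factor of $t$ there.
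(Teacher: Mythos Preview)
Your proof is correct and follows essentially the same approach as the paper: both reduce everything to direct size estimates using the profiles $W(r),|\Lambda W(r)|\lesssim\langle r\rangle^{-4}$ and split the one-dimensional integrals at the scales $\lambda,\mu,R$. Your write-up is in fact more detailed than the paper's (which leaves several of the splittings implicit), and your remark on why the logarithm in \eqref{estim2} disappears in \eqref{estim3}---the extra factor of $t$ converting $\int_\lambda^\mu t^{-1}\,dt$ into $\int_\lambda^\mu 1\,dt$---is a helpful clarification that the paper omits.
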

\begin{proof}
 The proof is by direct computations, using that $W$ and $\Lambda W$ are bounded and of order $1/|x|^4$ at infinity. The estimates \eqref{estim1} follow immediately. 
 
 Note that we can always assume $\mu=1$ by scaling. To prove \eqref{estim4} and the second inequality in \eqref{estim3}, observe that
 $$ \left\|t\Lambda W\,W_{(\lambda)}\right\|_{L^1L^2(\{|x|>R+|t|\})}\lesssim \frac{\lambda^2}{\mu^2} \left\|\frac{t\Lambda W}{r^4}\right\|_{L^1L^2(\{|x|>|t|+R\})},$$
and the inequalities follow, 
using that by direct computations
$$\frac{t\Lambda W}{r^4}\indic_{\{|x|>|t|\}}\in L^1L^2,\quad \left\|\frac{t\Lambda W}{r^4}\indic_{\{|x|>|t|+R\}}\right\|_{L^1L^2}\lesssim 1/R^3$$ 
for large $R$. The proof of the first inequality in \eqref{estim3} in the same. To prove \eqref{estim2}, we write
\begin{multline*}
 \int_{0}^{\infty} \left( \int_{\{|x|>t\}}W_{(\lambda)}^2W^2dx \right)^{1/2}dt\lesssim \int_{0}^{\lambda} \left(\int_{t}^{\infty}\frac{1}{\lambda^4}W^2\left(\frac{r}{\lambda}\right)r^5dr\right)^{1/2}dt\\
 +
\int_{\lambda}^{1} \left(\int_{t}^{\infty}\frac{\lambda^4}{r^8}r^5dr\right)^{1/2}dt+\int_{1}^{\infty} \left( \int_{t}^{\infty} \frac{\lambda^4}{r^{16}}r^5dr \right)^{1/2}dt\lesssim \lambda^2+\lambda^2|\log\lambda|+\lambda^2.
\end{multline*}
The proof of the estimates of $\Lambda W_{(\lambda)}W_{(\mu)}$ and $\Lambda W_{(\mu)}W_{(\lambda)}$ are the same.

We sketch the proof of \eqref{estim5}. By scaling, we can assume $\lambda=1$. Then
\begin{multline*}
\left\|W \indic_{\{R+|t|<|x|<R'+|t|\}}\right\|_{L^2L^4}\lesssim \left\|\indic_{\{|t|<1\}} \indic_{\{R+|t|<|x|<R'+|t|\}}\right\|_{L^2L^4}\\
+\left\|\frac{1}{|x|^4}\indic_{\{|t|>1\}} \indic_{\{R+|t|<|x|<R'+|t|\}}\right\|_{L^2L^4}\lesssim \left( \frac{R'-R}{\lambda} \right)^{1/4}.
\end{multline*}
To prove \eqref{estim6}, we decompose:
\begin{multline*}
\| W  \indic_{\{\max(|t|,R)<|x|\}} \|_{L^2L^4}^2 =\int_{|t|\leq R}\| W \indic_{\{|x|\geq R\}} \|_{L^4}^2 dt+\int_{|t|\geq R}\| W \indic_{\{|x|\geq |t|\}} \|_{L^4}^2 dt\\
\lesssim \int_{|t|\leq R}R^{-5}dt+\int_{|t|\geq R}|t|^{-5} dt \lesssim R^{-4}.
\end{multline*}
\end{proof}

\bibliographystyle{alpha} 
\bibliography{toto}
\end{document}